\theoremstyle{definition}
\newtheorem{anytheorem}{Theorem}[section] 
\newtheorem{definition}[anytheorem]{Definition}
\newtheorem{theorem}[anytheorem]{Theorem}
\newtheorem{lemma}[anytheorem]{Lemma}
\newtheorem{remark}[anytheorem]{Remark}
\newtheorem{example}[anytheorem]{Example}
\newtheorem{assumption}[anytheorem]{Assumption}
\DeclareMathOperator*{\argmin}{arg\,min}
\DeclareMathOperator*{\esssup}{ess\,sup}
\DeclareMathOperator*{\tr}{Tr}
\newcommand{\E}{\ensuremath{\mathbb{E}}}
\begin{document}

\title{The Modified MSA, a Gradient Flow and Convergence}

\author{Deven Sethi}
\address{School of Mathematics, University of Edinburgh}
\email{D.Sethi-1@sms.ed.ac.uk}

\author{David \v{S}i\v{s}ka}
\address{School of Mathematics, University of Edinburgh and Vega Protocol, Gibraltar}
\email{D.Siska@ed.ac.uk}

\date{\today}

\subjclass[2010]{93E20; 60H30; 37N40; 65K99}


\keywords{Stochastic control, Method of Successive Approximations, Gradient Flow, Convergence}

\begin{abstract}
The modified Method of Successive Approximations (MSA) is an iterative scheme for approximating solutions to stochastic control problems in continuous time based on Pontryagin Optimality Principle which, starting with an initial open loop control, solves the forward equation, the backward adjoint equation and then performs a static minimization step. 
We observe that this is an implicit Euler scheme for a gradient flow system. 
We prove that appropriate interpolations of the iterates of the modified MSA converge to a gradient flow with rate $\tau$. 
We then study the convergence of this gradient flow as time goes to infinity. 
In the general (non-convex) case we prove that the gradient term itself converges to zero. 
This is a consequence of an energy identity which shows that the optimization objective decreases along the gradient flow.  
Moreover, in the convex case, when Pontryagin Optimality Principle provides a sufficient condition for optimality, we prove that the optimization objective converges at rate $\tfrac{1}{S}$ to its optimal value
and at exponential rate under strong convexity. 
The main technical difficulties lie in obtaining appropriate properties of the Hamiltonian (growth, continuity). 
These are obtained by utilising the theory of Bounded Mean Oscillation (BMO) martingales required for estimates on the adjoint Backward Stochastic Differential Equation (BSDE). 
\end{abstract}

\maketitle

\section{Introduction}
Stochastic control problems in continuous space and time have numerous applications in engineering, finance and economics. 
The two main approaches to solving such problems are to work with the Hamilton--Jacobi--Bellman (HJB) equation, see e.g. Krylov~\cite{krylov2008controlled}, Fleming and Soner~\cite{fleming2006controlled} and Pontryagin's Optimality Principle, see e.g. Pham~\cite{pham2009continuous} or Carmona~\cite{carmona2016lectures}. 
Both methods have been extensively studied over the last several decades. 
However, neither method yields closed form solutions for most problems of practical interest and hence one has to resort to numerical methods.

Whether one uses the HJB equation or Pontryagin's Optimality Principle one is faced with a nonlinear problem in continuous space and time and a choice: should one discretise first and then face a discrete but still nonlinear problem or linearise first and then discretise a linear problem? 
In this paper we focus on a method which linearises the problem arising when one builds an algorithm based on Pontryagin's optimality principle: namely the modified Method of Successive Approximations (MSA). 

Before going into details about this paper we give a brief overview of other related results. 
Our paper focuses on iterative methods for Pontryagin Optimality Principle so we discuss the methods based on the HJB only for completeness.
If one chooses to first discretise the HJB equation then one can use finite differences, see~\cite{dong2007rate, barles2002convergence, gyongy2009finite} and the references therein for convergence and error bounds. 
Such methods will typically suffer from the curse of dimensionality.
Indeed the term has been coined by Bellman as the problem arises in numerical solutions of dynamic optimization problems~\cite{bellman1957dynamic}.

If one linearises the HJB equation via iteration then this leads to the continuous space and time analogues of the classical value or policy iteration algorithms. 
At each step of the algorithm one has to solve (most likely approximate) a linear PDE and then in the case of policy iteration also update the Markovian control function by solving a static minimization problem. 
Recently Jentzen et al. have proved that the kind of linear PDEs that would arise can be approximated by neural networks without suffering from the curse of dimensionality, see e.g.~\cite{grohs2018aproof, jentzen2021proof, gonon2022uniform} and references therein. 
There are also many papers examining algorithms that provide good approximations of such linear PDEs in high dimensions, see e.g.~\cite{e2017deep, sirignano2018dgm, sabate2021unbiased}. 
The point is that it makes sense to first linearise the problem via iteration and then solve the arising linear PDE with whichever well-studied method is the most appropriate.
For results concerning convergence of algorithms which linearise the HJB equation via iteration for continuous space and time problems see~\cite{puterman1981onconvergence, jacka2017onpolicy, jacka2017coupling, maeda2017evaluation, kerimkulov2020exponential, dong2022randomized,huang2022convergence} and references therein.
Note that policy and value iteration algorithms go back to the very origins of stochastic control (or Markov decision processes). 
The original formulations of the finite-state-and-action space versions of the algorithms go back to Bellman, see~\cite{bellman1957dynamic}.

Let us now turn our attention to iterative methods based on the Pontryagin optimality principle. 
Indeed these are the main focus of this paper.
Under appropriate conditions (essentially on the growth and differentiability of the drift, diffusion and cost functions) this provides a necessary condition for optimality.
Under further convexity assumptions it also provides a sufficient condition for optimality, see e.g.~\cite[Ch. 6]{pham2009continuous} 
or~\cite[Ch. 4]{carmona2016lectures}.
We will now give a heuristic overview of the problem, the algorithm and our results.
All notation and assumptions will be stated precisely in Sections \ref{sec notation} and~\ref{Formulation of Problem}.
Our objective is to minimise the functional  
\begin{equation}\label{Gain_functional_intro}
J(\alpha)=\E\left[\int_0^{T}f(t,X_t^\alpha,\alpha_{t})dt+g(X_{T}^{\alpha})\right],
\end{equation}
over $\alpha\in L^{2}_{\text{prog}}([0,T]\times\Omega;\mathbb{R}^{p})$, which is the space of admissible open loop controls, 
subject to
\begin{align}
\label{controlled_SDE_intro}
dX_{t}^{\alpha}=b(t,X_{t}^{\alpha},\alpha_{t})dt+\sigma(t,X_{t}^{\alpha},\alpha_{t})dW_{t},\enspace t\in[0,T],\enspace X_0^{\alpha} = x.
\end{align}
We define the Hamiltonian as 
\begin{align*}
\mathcal{H}(t,x,y,z,a)=b(t,x,a)\cdot y + \sigma(t,x,a):z + f(t,x,a)\,.
\end{align*}
The adjoint equation is then given by the Backward Stochastic Differential Equation (BSDE)
\begin{align}\label{adjoint equation_intro}
dY_{t}^{\alpha} =-D_{x}\mathcal{H}(t,X_{t}^{\alpha},Y_{t}^{\alpha},Z_{t}^{\alpha},\alpha_{t})dt + Z_{t}^{\alpha}dW_{t},\enspace t\in[0,T],\enspace Y_{T}^{\alpha}=D_{x}g(X_{T}^{\alpha}).
\end{align}
Pontryagin Optimality Principle stipulates that if $\hat{\alpha}$ is an (at least locally) optimal control, and $\hat{X}, \hat{Y},\hat{Z}$ are the corresponding solutions to~\eqref{controlled_SDE_intro} and~\eqref{adjoint equation_intro} then for almost all $(t,\omega)$ we have
\begin{align}
\label{eq optimality cond intro}
\mathcal{H}(t,\hat X_{t},\hat Y_{t},\hat Z_{t},\hat \alpha_{t})=\inf_{a \in \mathbb R^p}\mathcal{H}(t,\hat X_{t},\hat Y_{t},\hat Z_{t},a)\,.
\end{align}
Under the assumption that $g$ is convex and $\mathcal H$ is convex in $(x,a)$  in all other parameters~\eqref{eq optimality cond intro} is now a sufficient condition: if $\hat \alpha$, $\hat X$, $\hat Y$, $\hat Z$ satisfy~\eqref{controlled_SDE_intro}, \eqref{adjoint equation_intro} and~\eqref{eq optimality cond intro} then $J(\hat \alpha) = \inf_{\alpha} J(\alpha)$.  

Based on this one can see why the MSA algorithm (Algorithm~\ref{alg mmsa} with $\tau=\infty$) is a natural iterative method.
At each step of the algorithm we solve the SDE, the adjoint BSDE 
and then perform a static optimization step. 
It has been observed already in~\cite{iakrylov1963on} 
that this algorithm is divergent if $\tau=\infty$ in many cases (even for deterministic control problems).
This can be corrected by addition of extra terms over which to perform the minimization which leads to the ``modified'' MSA. 
In our case this corresponds to $\tau<\infty$ but other modifications 
are possible, see~\cite{iakrylov1963on} and~\cite{kerimkulov2021modified} for the stochastic case.
We will show in Lemma~\ref{cost_functional_increment_bound} and Remark~\ref{rmk:cost_functional_increment_bound} that taking a sufficiently large modification ensures that the cost functional $J$ is decreasing with each iteration.

\begin{algorithm}[H]
\caption{Modified Method of Successive Approximations:}
\label{alg mmsa}
\begin{algorithmic}[1]
\STATE{Initialisation: take $\tau < \max\{\lambda,2C_{J, \mathcal H}\}^{-1}$, where $C_{J, \mathcal H}$ is the constant from Lemma~\ref{cost_functional_increment_bound} and $\lambda$ is the constant from Assumption \ref{new lambda convexity}.}
\STATE{Initialisation: make a guess of the control $\alpha^0 = (\alpha^0_t)_{t\in[0,T]}$.}
\REPEAT
\STATE{Solve 
\begin{align*}
dX_t^n &= b(t,X_t^n,\alpha^n_t)dt+\sigma(t,X_t^n,\alpha_t^n)dW_t,\enspace X_0^n=x,\\
dY_{t}^{n}&=-D_{x}\mathcal{H}(t,X_t^n,Y_t^n,Z_t^n,\alpha_t^{n})dt+Z_{t}^{n}dW_{t},\enspace Y_T^{n}=D_{x}g(X_{t}^{n}).
\end{align*} }
\STATE{For each $t\in[0,T]$ and $\omega\in\Omega$ update the control 
\begin{align}\label{gradient_requirement}
\alpha^{n+1}_{t}=\argmin_{a\in \mathbb R^p}\mathcal{H}(t,X_{t}^{n},Y_{t}^{n},Z_{t}^{n},a)+\frac{1}{2\tau}|a-\alpha_{t}^{n}|^2.
\end{align}}
\UNTIL{$0\leq J(\alpha^{n})-J(\alpha^{n+1})$ is sufficiently small.}
\RETURN $J(\alpha^{n})$.
\end{algorithmic}	
\end{algorithm}

First we show that for $s\in [0,S]$ and $t\in [0,T]$ the continuous time gradient flow system
\begin{equation}
\label{Gradient_flow_system_intro}
\left\{ 
\begin{split}
\frac{d}{ds}\alpha_{s,t} & =-D_{a}\mathcal{H}(t,X_{s,t},Y_{s,t},Z_{s,t},\alpha_{s,t}),\,\,\,\text{$\alpha_{0,t}=\alpha^{0}_{t}$},\\    
dX_{s,t} & =b(t,X_{s,t},\alpha_{s,t})dt + \sigma(t,X_{s,t},\alpha_{s,t})dW_{t}\,, \,\,\, X_{s,0} = x\,, \\
dY_{s,t} & = D_{x}\mathcal{H}\left(t,X_{s,t},Y_{s,t},Z_{s,t},\alpha_{s,t}\right)dt + Z_{s,t}dW_{t}\,,\,\,\, Y_{s,T} = D_{x}g(X_{s,T})\,,\\
\end{split}
\right.
\end{equation}
has a unique solution, see Theorem~\ref{thm:existence_GF}.
The time $t$ corresponds to the original stochastic control problem time while the time $s$, which is the gradient flow time, corresponds to the iteration number $n$ in Algorithm~\ref{alg mmsa}. 

The two main results of the paper are the following. 

We show that as $\tau \to 0$ 
the iterates of the modified MSA (appropriately interpolated) converge with rate $\tau$ to~\eqref{Gradient_flow_system_intro},
see Theorem~\ref{thm:convergence_rate}.
It is readily seen from the statement of the theorem that the rate is not uniform in $S>0$ unless the problem itself is sufficiently strongly convex.
Our second main result concerns the convergence (in appropriate sense) of~\eqref{Gradient_flow_system_intro} as the gradient flow time goes to infinity.
In Theorem~\ref{prop: L^2-convergence of D_aH} we show that in the non-convex case $D_a \mathcal H(X_s,Y_s,Z_s,\alpha_s) \to 0$ in $L^{2}_{\text{prog}}([0,T]\times\Omega;\mathbb{R}^{p})$.  
Under the convexity assumptions which ensure that Pontryagin Optimality Principle is a sufficient condition we prove that $J(\alpha_S) \to \inf_{\alpha}J(\alpha)$ with rate $1/S$.
Moreover, we show that with strong convexity, 
$J(\alpha_S)$ and $\alpha_S$ converge exponentially to $J(\alpha^*)$ and $\alpha^*$ respectively, where $\alpha^*$ represent the optimal control.
This is Theorem~\ref{thm: Convergence_of_value_fiunction}.

Let us now illustrate the main results of this paper by analogy with a finite dimensional case. 
To that end consider $F:\mathbb R^p \to \mathbb R$ and an iteration of the form 
\[
a^\tau_n = \argmin_a F(a) + \tfrac1{2\tau}|a-a^\tau_{n-1}|^2\,.
\]
This is the minimizing movement scheme, see e.g.~\cite{santambrogio2017euclidean}, which is a finite dimensional analogue of the modified MSA algorithm; certainly the connection to~\eqref{gradient_requirement} is clear. 
If $F$ is $\lambda$-convex then we can take $\tau>0$ small enough so that the arg min is indeed single valued. 
Employing the first order condition of calculus and the convexity of $a\mapsto F(a) + \tfrac1{2\tau}|a-a^\tau_{n-1}|^2$ we see that the $\argmin$ condition is equivalent to
\[
\tfrac{a_{n+1}^\tau - a_n^\tau}{\tau} = -D_a F(a_{n+1}^\tau)\,
\]
which is the implicit Euler scheme with timestep $\tau$ for $\frac{d}{ds}a_s = -D_a F(a_s)$. 
Interpolating the iterates one can show that these interpolations indeed converge to the ODE. 
In this paper the analogous result is Theorem~\ref{thm:convergence_rate}.
A key step is to obtain uniform (in $\tau$) a priori bounds (which we prove in our setting in Section~\ref{Existence of the Gradient Flow}).
For this we need fine estimates on the solutions of the BSDE~\eqref{adjoint equation_intro} which employ the theory of exponential martingales and bounded mean oscillation (BMO) martingales, see Section~\ref{sect:existence of solut}. 
Once the BSDE estimates are established we use these together with the uniform a priori bounds to obtain the convergence rate.

Still continuing with the finite dimensional analogy let us consider convergence of $\tfrac{d}{ds}a_s = -D_a F(a_s)$ as $s\to \infty$.
From the chain rule we get $\frac{d}{ds}F(a_s) = -|D_a F(a_s)|^2$. 
Integrating this we see that
\[
F(a_S) - F(a_0) = -\int_0^S |D_a F(a_s)|^2\,ds
\]
and hence 
\begin{align*}
\int_0^\infty |D_a F(a_s)|^2\,ds = F(a_0) - \lim_{S\to \infty} F(a_S) 
\leq F(a_0) - \inf_a F(a) < \infty 
\end{align*}
as long as $F$ is lower bounded. 
If it is also smooth enough (e.g. so that $D_a F$ is Lipschitz continuous) then $s\mapsto |D_a F(a_s)|^2$ is uniformly continuous on $[0,\infty)$ and hence $\lim_{s\to \infty} |D_a F(a_s)|^2 = 0$.
In this paper the analogous result about~\eqref{Gradient_flow_system_intro} is Theorem~\ref{prop: L^2-convergence of D_aH}. 
The greatest difficulty arises in proving the energy identity for~\eqref{Gradient_flow_system_intro}, which is Lemma~\ref{functional_decreases_along_flow}, and in obtaining the required uniform-in-time continuity.
Finally, let us consider the convex case where $F$ satisfies  
$F(a)-F(b) \geq D_a F(b)\cdot(a-b)$.
Then for any fixed $a^\ast \in \argmin_a F(a)$ we have, due to convexity, that 
\[
\tfrac{d}{ds}|a_s - a^\ast|^2 = -2(a_s - a^\ast)\cdot D_a F(a_s) \leq 2F(a^\ast) - 2F(a_s)\,.
\]
Integrating and observing that due to the energy identity $F(a_S) \leq F(a_s)$ whenever $S\geq s$ we have 
\begin{align*}
2S(F(a_S) - F(a^\ast))&\leq 2\int_0^S \big(F(a_s) - F(a^\ast)\big)\,ds \leq - \int_0^S \tfrac{d}{ds}|a_s - a^\ast|^2\,ds \\
&\leq |a_0 - a^\ast|^2 - |a_S - a^\ast|^2.
\end{align*}
Hence $0\leq F(a_S) - F(a^\ast) \leq \tfrac1{2S} |a_0 - a^\ast|^2$.
If $F$ was strongly convex such that it would satisfy $F(a)-F(b) \geq D_a F(b)\cdot(a-b) + \frac{\eta}{2}|a-b|^2$ for some $\eta>0$, then a suitably modified argument yields the exponential convergence.
In Theorem~\ref{thm: Convergence_of_value_fiunction} we prove an analogous result for~\eqref{Gradient_flow_system_intro}.
The key observation which enables this is that if the Pontryagin Optimality Principle holds as a sufficient condition then for any admissible $\alpha$ and $\beta$ we have
\begin{align}\label{eqn:convexity_notion}
J(\alpha) - J(\beta)\geq (D_a\mathcal{H}\left(X^{\beta},Y^{\beta},Z^{\beta},\beta\right),\alpha-\beta)\,.
\end{align}
Here $(\cdot, \cdot)$ denotes the inner product in $L^{2}_{\text{prog}}([0,T]\times\Omega;\mathbb{R}^{p})$.
See Lemma~\ref{lemma: convexity of cost}.

We will now look at related existing results. 
We already mentioned that the study of the MSA (in the deterministic setting) and the proposal to modify it to ensure some convergence goes back to~\cite{iakrylov1963on}.
Analogous results for the stochastic case are proved in~\cite{kerimkulov2021modified} under the assumption $D^2_x \sigma = 0$.
It is proved that the algorithm improves the optimization objective and that $L^2$ difference of the Hamiltonians evaluated at two consecutive iterates of the algorithm goes to zero.
This is effectively a discrete analogue of Theorem~\ref{prop: L^2-convergence of D_aH} here.
Moreover in~\cite{kerimkulov2021modified} it is shown that if the problem is convex in the sense that Pontryagin optimality principle is a sufficient condition for optimality and if it has a structure that separates the space and control space dependence of the drift, diffusion and cost functions, then the modified MSA converges at rate of $1/n$.
This is analogous to Theorem~\ref{thm: Convergence_of_value_fiunction} but requiring more restrictive assumptions. 
In~\cite{ji2021modified} the authors study the stochastic recursive optimal control problem. 
Moreover, they employ an optimality principle which goes back to~\cite{peng1990general} and removes the need for the space of controls to be convex (in our case $\mathbb R^p$).
Finally, by using BMO martingale theory they are able to drop the assumption $D^2_x \sigma = 0$. 
Following their lead, we employ the same technique in this paper.
Gradient systems similar to~\eqref{Gradient_flow_system_intro} have been studied, without making the connection to the modified MSA algorithm but considering relaxed (or more generally measure-valued) controls giving rise to gradient flows on Wasserstein space, see~\cite{hu2019meanode, jabir2019mean} for the non-stochastic entropy regularized case and~\cite{siska2020gradient} for the stochastic control entropy regularized case.
The main results there consist of a formulation of Pontryagin Optimality as a necessary condition in the measure-valued-controls case and then in proving an exponential rate of convergence to the optimal open loop and measure-valued control under the assumption that the costs are strongly convex and/or the entropy regularization sufficiently strong, relative to the regularity and growth of the drift and diffusion, so that the gradient flow is strongly dissipative.

In~\cite{reisinger2022linear} the authors consider what can be seen as a variant of the modified MSA algorithm: proximal policy gradient method. 
They identify various conditions (small time horizon $T$, large discount factor, running cost sufficiently convex or weak dependence of costs on state) for linear i.e. $1/n$ rate of convergence of the algorithm.
They also allow for the inclusion of non-differentiable but convex running cost $\ell$ which can play a role of e.g. enforcing control space constraints.
If $\ell = 0$ then the proximal update step which replaces~\eqref{gradient_requirement} is
\[
\alpha_t^{n+1} \in \argmin_{a \in \mathbb R^p} \left( D_a H(t,X^n_t,Y^n_t, Z^n_t, \alpha^n_t) + \tfrac1{2\tau}|a-\alpha_t^n|^2 \right)\,.
\]
Notice that if the argmin is single valued then this is equivalent to the explicit Euler scheme for the first component of~\eqref{Gradient_flow_system_intro} and thus it can be viewed as another discretization of the gradient flow system studied in this paper.

One could convincingly argue that the modified MSA algorithm is of little practical interest in the stochastic case.
One has to solve a BSDE at each step of the iteration and once the algorithm terminated it only yields  an estimate of the optimal cost functional $J$ for one fixed $x\in \mathbb R^d$ and an estimated close-to-optimal open loop control.
However, it is possible to replace open loop controls with Markov controls in the modified MSA algorithm, at least in the case when the control doesn't enter the diffusion coefficient. 
This rests on the observation that in the Markovian framework the solution to the backward equations have representation as $Y^n_t = u^n(t,X^n_t)$ and $Z^n_t = \sigma^{-1}(t,X^n_t)D_x u^n(t,X^n_t)$ where $u^n$ is a solution of a parabolic PDE. 
One can then replace the step solving BSDE in Algorithm~\ref{alg mmsa} with solving the appropriate linear PDE and parametrize the Hamiltonian update step by $(t,x)$ instead of $(t,\omega)$ thus obtaining Markov control function at each iteration step. 
This is described in much more detail (including numerical experiments) in~\cite{reisinger2021fast}. 
We have already discussed earlier that it is possible to solve linear PDEs efficiently even in high dimensions using machine learning methods.
So in fact the theoretical analysis of the modified MSA presented in this paper forms a component of convergence analysis of an algorithm which is of practical interest.

\textbf{Organisation of the paper:}
In the remainder of the introduction we will fix the notation, see Section~\ref{sec notation} and we will fully state the stochastic control problem under consideration including assumptions required to make it well posed, see Section~\ref{Formulation of Problem}. 
In Sections \ref{sect:existence} and~\ref{Intro: Convergence to GF} we will present the main results regarding the existence of solutions to the gradient flow and convergence of the modified MSA to the gradient flow~\eqref{Gradient_flow_system_intro} respectively. 
In Section~\ref{Intro: convergence to OC} results on the convergence of the gradient flow as $S\to \infty$ are presented.
In Section \ref{sect:existence of solut} we prove the results outlined in Section~\ref{sect:existence} .
Section~\ref{Existence of the Gradient Flow} is devoted to proving results announced in Section~\ref{Intro: Convergence to GF} while in Section~\ref{Sect: convergence to OC} we prove results announced in Section~\ref{Intro: convergence to OC}.
In Section~\ref{sec examples} we collect a few examples that fit the setting of the paper. 
In Appendix~\ref{sec appendix} we collect results and their proofs which are mostly standard or the technique appeared elsewhere and require perhaps only a minor modification to fit our setting; we do not consider these to be a main contribution of this work.

\subsection{Notation}
\label{sec notation}
For any $m\in \mathbb N$ and $x,y\in \mathbb R^m$ we use $x\cdot y$ to denote the dot product. 
For matrices $A, B\in \mathbb R^{m'\times m}$ we let $A : B := \text{trace}(A^\top B)$.
We use the notation $|\cdot|$ for a norm on any finite dimensional vector space.
We will use standard notation for Lebesgue and Bochner spaces. 
We will use $\|\cdot\|_X$ to denote the norm in any infinite dimensional Banach space $X$. 
In particular if $X$ is a complete separable Banach space then $L^2(0,S;X)$ is the space of Bochner integrable and $X$-valued functions such that $\|f\|_{L^2(0,S;X)}:=\left(\int_0^S \|f_s\|_X^2\,ds\right)^{1/2} <\infty$.
Let $C\left(\left[0,S\right];X\right)$ represent the space of continuous $X$-valued functions such that $\|f\|_{C\left(\left[0,S\right];X\right)}:=\sup_{s\in[0,S]}\left\|f_s\right\|_X<\infty$. 
The subspace of $C\left(\left[0,S\right];X\right)$ of absolutely continuous functions will be denoted by
$AC\left(\left[0,S\right];X\right)$.
For any infinite dimensional Hilbert space we will write $(\cdot,\cdot)$ to denote the inner product.
The constants $c, C >0$ appearing can change from line to line and can depend on the constants declared in Section~\ref{Formulation of Problem} and problem dimensions i.e. they can depend on $K$, $T$, $d$, $d'$, $p$.
They will always be independent of the control (i.e. typically $\alpha$), $n$, $s$, $S$, $\tau$.  
We will sometimes use subscript to explicitly declare the dependence.
Whenever we take supremum or infimum over $\alpha$ it is implicitly understood that this is over $\alpha \in L^2_{\text{prog}}([0,T]\times \Omega);\mathbb R^p)$ unless stated otherwise.
Finally, for $n\in\mathbb{N}$, we will use $\alpha^n$ to represent the $n$-th open loop control generated by the update step of the MMSA, see Algorithm~\ref{alg mmsa}.

There will be additional notation related to BMO martingales introduced in Section~\ref{sect:existence of solut}, see in particular~\eqref{notation_space_1}, \eqref{notation_norm} and Definition~\ref{def bmo}.
As it isn't needed to state the main results we do not introduce it here.

\subsection{Formulation of the Problem}
\label{Formulation of Problem}
Fix some $K>0$, $T\in[0,\infty)$ and let $(\Omega,\mathcal{F},\mathbb{F}=(\mathcal{F}_{t})_{t\geq 0},\mathbb{P})$ be a filtered probability space and $W=(W^{1},\dots,W^{d'})$ be a $d'$-dimensional $\mathbb{F}$-Weiner process. 
Assume that we are given measurable functions $(b,\sigma,f):[0,T]\times\mathbb{R}^{d}\times \mathbb{R}^{p}\rightarrow (\mathbb{R}^{d}\times\mathbb{R}^{d\times d'}\times\mathbb{R})$ and $g:\mathbb R^d \to \mathbb R$ and our objective is to minimise the functional  
\begin{equation}\label{Gain_functional}
J(\alpha)=\E\left[\int_0^{T}f(t,X_t^\alpha,\alpha_{t})dt+g(X_{T}^{\alpha})\right],
\end{equation}
over
$\alpha\in L^{2}_{\text{prog}}([0,T]\times\Omega;\mathbb{R}^{p})$, which denotes the space of 
progressively measurable (with respect to the filtration $\mathbb{F}$) processes such that $\E\left[\int_0^T\left|\alpha_t\right|^{2}dt\right]<\infty$.
We will call these controls admissible. 
The dynamics of the $\mathbb{R}^{d}$-valued process $X^{\alpha}$ is given by the controlled stochastic differential equation (SDE)
\begin{align}\label{controlled_SDE}
dX_{t}^{\alpha}=b(t,X_{t}^{\alpha},\alpha_{t})dt+\sigma(t,X_{t}^{\alpha},\alpha_{t})dW_{t},\enspace t\in[0,T],\enspace X_0^{\alpha} = x.
\end{align}

\begin{assumption}\label{Assumptions_Old}
Let $(b,f,\sigma):[0,T]\times\mathbb{R}^{d}\times \mathbb{R}^{p}\rightarrow (\mathbb{R}^{d},\mathbb{R},\mathbb{R}^{d\times d'})$ satisfy the following conditions
\begin{enumerate}
\item For any $a\in \mathbb R^p$ the functions $b,\sigma$ and $f$ are jointly continuous in $(t,x)$.
\item For all $(t,x,a)\in[0,T]\times\mathbb{R}^{d}\times \mathbb{R}^{p}$, 
\begin{align*}
|D_{x}b(t,x,a)|+|D_{x}\sigma(t,x,a)|\leq K\text{ and }|b(t,x,a)|+|\sigma(t,x,a)|\leq K(1+|x|+|a|).
\end{align*}
\item For all $(t,x,a)\in[0,T]\times\mathbb{R}^d\times\mathbb{R}^p$
\begin{align*}
|f(t,x,a)|\leq K\left(1+|x|^2+|a|^{2} \right),\enspace |g(x)|\leq K(1+|x|^{2}).
\end{align*}
\end{enumerate}
\end{assumption}
This assumption and the mean value theorem imply that for any $x,x'\in\mathbb{R}^d$ and for any $(t,a)\in[0,T]\times\mathbb{R}^p$ we have
\begin{align*}
|b(t,x,a)-b(t,x',a)|+|\sigma(t,x,a)-\sigma(t,x',a)|\leq K|x-x'|.
\end{align*} 
\begin{theorem}\label{2nd-moments-controlled-SDEs}
Let Assumptions \ref{Assumptions_Old} hold. 
Then \eqref{controlled_SDE} has a unique solution and
there exists a constant $c\geq 0$ depending only on $K,d,d',p$ and $T$ such that
\begin{align*}
\sup_{\alpha\in L^2_{\text{prog}}\left([0,T]\times\Omega;\mathbb{R}^p\right)}\E\left[\sup_{0\leq t\leq T}\left|X_t^{\alpha}\right|^{2}\right]\leq c\left(1+|x|^2\right).
\end{align*}
\end{theorem}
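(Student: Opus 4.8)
The plan is to establish the result by first proving existence and uniqueness of the solution to the controlled SDE~\eqref{controlled_SDE} for a fixed admissible $\alpha$, and then deriving the uniform second moment estimate. For existence and uniqueness, the obstacle is that the coefficients $b$ and $\sigma$ are not globally Lipschitz in $x$ uniformly — wait, actually Assumption~\ref{Assumptions_Old}(2) gives $|D_x b| + |D_x\sigma| \le K$, so by the mean value theorem (as the excerpt already notes) they \emph{are} globally Lipschitz in $x$ with constant $K$, uniformly in $(t,a)$. So existence and uniqueness follow from the standard theory of SDEs with random (progressively measurable) coefficients: for fixed $\alpha \in L^2_{\text{prog}}$, the maps $(t,\omega,x)\mapsto b(t,x,\alpha_t(\omega))$ and similarly for $\sigma$ are progressively measurable, Lipschitz in $x$ uniformly in $(t,\omega)$, and satisfy the linear growth bound $|b(t,x,\alpha_t)| + |\sigma(t,x,\alpha_t)| \le K(1+|x|+|\alpha_t|)$ with $\alpha \in L^2$. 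A Picard iteration / fixed point argument in the space of progressively measurable processes with $\E[\sup_{t\le T}|X_t|^2] < \infty$ then yields a unique strong solution; I would cite a standard reference (e.g.\ the SDE theory in the books already cited, or Krylov/Protter).

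For the moment bound, the key steps are as follows. First apply It\^o's formula to $|X_t^\alpha|^2$, take expectations, and use that the stochastic integral term is a true martingale (after a localization argument with stopping times $\tau_R = \inf\{t : |X_t^\alpha| \ge R\}$, to be removed at the end by Fatou / monotone convergence). This gives
\begin{align*}
\E|X_{t\wedge\tau_R}^\alpha|^2 = |x|^2 + \E\int_0^{t\wedge\tau_R}\left(2 X_s^\alpha \cdot b(s,X_s^\alpha,\alpha_s) + |\sigma(s,X_s^\alpha,\alpha_s)|^2\right) ds.
\end{align*}
Using the linear growth bound and Young's inequality, $2X_s\cdot b + |\sigma|^2 \le C(1 + |X_s|^2 + |\alpha_s|^2)$ with $C$ depending only on $K,d,d'$. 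Hence
\begin{align*}
\E|X_{t\wedge\tau_R}^\alpha|^2 \le |x|^2 + C\int_0^t \E|X_{s\wedge\tau_R}^\alpha|^2\, ds + C\,T + C\,\E\int_0^T|\alpha_s|^2\, ds,
\end{align*}
and Gr\"onwall's inequality gives $\sup_{t\le T}\E|X_{t\wedge\tau_R}^\alpha|^2 \le C(1 + |x|^2 + \E\int_0^T|\alpha_s|^2\,ds)\, e^{CT}$, uniformly in $R$; letting $R\to\infty$ via Fatou removes the localization.

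To upgrade from $\sup_t \E|X_t|^2$ to $\E[\sup_t |X_t|^2]$, I would go back to the It\^o expansion before taking expectations, take a supremum over $t$, and apply the Burkholder--Davis--Gundy inequality to the martingale part: $\E\sup_{t\le T}\left|\int_0^t 2 X_s^\alpha\sigma(s,X_s^\alpha,\alpha_s)\,dW_s\right| \le C\,\E\left(\int_0^T |X_s^\alpha|^2 |\sigma(s,X_s^\alpha,\alpha_s)|^2\,ds\right)^{1/2}$, then bound $|X_s|^2|\sigma|^2 \le C|X_s|^2(1+|X_s|^2+|\alpha_s|^2)$ and use the elementary inequality $ab \le \varepsilon a^2 + \frac{1}{4\varepsilon}b^2$ to absorb $\varepsilon\,\E\sup_{t\le T}|X_t^\alpha|^2$ into the left-hand side, using the already-established finiteness of $\sup_t\E|X_t|^2$ and Gr\"onwall once more. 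The final bound has the constant $C$ depending only on $K,d,d',p,T$ and, crucially, \emph{not} on $\alpha$ — the only place $\alpha$ enters is through the additive term $\E\int_0^T|\alpha_s|^2\,ds$, and here I would note that the statement as written demands a bound $c(1+|x|^2)$ with no $\alpha$-dependence, so either the claim implicitly presumes a uniform bound on the admissible control norm, or (more likely, matching the later use in the paper) the supremum is over controls with a fixed bound on $\E\int_0^T|\alpha_s|^2$; in any case the $\alpha$-term is linear in the control's squared norm.

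The main obstacle is really bookkeeping rather than conceptual: making the localization and BDG-absorption steps rigorous (ensuring the quantities one absorbs are a priori finite before absorbing them), and tracking that the final constant genuinely depends only on $K,d,d',p,T$. I expect the proof to be short and to cite standard SDE well-posedness for the existence/uniqueness half, with the moment estimate being a textbook Gr\"onwall--BDG argument.
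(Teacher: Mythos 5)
The paper does not actually prove this theorem; it simply states that the proof is classical and cites Krylov's book. Your proposal is precisely that classical argument (Lipschitz-in-$x$ coefficients from Assumption \ref{Assumptions_Old}(2) plus the mean value theorem, Picard iteration for well-posedness, then It\^o, localization, Gr\"onwall, and Burkholder--Davis--Gundy for the supremum moment), and all the steps are sound.

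Your side remark about the $\alpha$-dependence is worth taking seriously rather than brushing aside: with the growth bound $|b(t,x,a)|+|\sigma(t,x,a)|\leq K(1+|x|+|a|)$, the estimate your argument (or any argument) produces is $\E\left[\sup_{t\leq T}|X_t^{\alpha}|^2\right]\leq c\left(1+|x|^2+\E\int_0^T|\alpha_t|^2\,dt\right)$, and the supremum over \emph{all} of $L^2_{\text{prog}}([0,T]\times\Omega;\mathbb{R}^p)$ of the left-hand side is genuinely infinite (take $b(t,x,a)=a$, $\sigma=0$, and let $\|\alpha\|_{L^2}\to\infty$). So the theorem as literally stated cannot hold; the correct statement carries the additive term $\E\int_0^T|\alpha_t|^2\,dt$, or restricts the supremum to controls with a fixed $L^2$ bound. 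Inspecting how the bound is used later in the paper (finiteness of $J(\alpha)$ for each fixed admissible control, and the bound on $I_3$ in the proof of Lemma \ref{functional_decreases_along_flow}, where the controls in play are uniformly bounded in $L^2$ along the flow), the $\alpha$-dependent version is what is actually needed, so nothing downstream breaks. Your proof, with the bound stated in its $\alpha$-dependent form, is the right one.
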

The proof of this result is classical and can be found in \cite{krylov2008controlled}.
From Assumption \ref{Assumptions_Old} and Theorem \ref{2nd-moments-controlled-SDEs} we have that for each admissible control
\begin{align*}
|J(\alpha)|\leq&\E\left[\int_0^T|f(t,X_t^{\alpha},\alpha_t)|dt+|g(X_T^{\alpha})|dt\right]<\infty.
\end{align*}
Let the Hamiltonian $\mathcal{H}:[0,T]\times\mathbb{R}^{d}\times\mathbb{R}^{d}\times\mathbb{R}^{d\times d'}\times \mathbb{R}^{p}\rightarrow \mathbb{R}$ be given by
\begin{align}
\mathcal{H}(t,x,y,z,a)=b(t,x,a)\cdot y + \sigma(t,x,a):z + f(t,x,a).
\end{align}
For each admissible control $\alpha$, the adjoint equation is given by the following BSDE
\begin{align}\label{adjoint equation}
dY_{t}^{\alpha} =-D_{x}\mathcal{H}(t,X_{t}^{\alpha},Y_{t}^{\alpha},Z_{t}^{\alpha},\alpha_{t})dt + Z_{t}^{\alpha}dW_{t},\enspace t\in[0,T],\enspace Y_{T}^{\alpha}=D_{x}g(X_{T}^{\alpha}).
\end{align}
\begin{assumption}\label{Ass_2nd_derivative_sapce}
For all $(t,x,a)\in[0,T]\times\mathbb{R}^{d}\times \mathbb{R}^{p}$,
\begin{align*}
|D^2_{x}b(t,x,a)|+|D^2_{x}\sigma(t,x,a)|+|D^2_xf(t,x,a)|+|D_x^2g(x)|\leq& K.
\end{align*}
\end{assumption}
\begin{definition}[Solution to Adjoint Equation]\label{Solution_to_Adjoint}
An adapted  $\mathbb{R}^{d}\times\mathbb{R}^{d\times d'}$-valued process $\left(Y^{\alpha},Z^{\alpha}\right)$ such that 
\begin{align}\label{square_int_sol}
\E\left[\sup_{0\leq t\leq T }|Y_t^{\alpha}|^2+\int_0^T|Z_t^{\alpha}|^2dt\right]<\infty,
\end{align}
and such that it satisfies~\eqref{adjoint equation} is called a solution to~\eqref{adjoint equation}.
\end{definition}
If follows from Assumptions \ref{Ass_2nd_derivative_sapce} that for any $\alpha\in L^{2}_{\text{prog}}([0,T]\times\Omega;\mathbb{R}^p)$ the driver of the adjoint equation (i.e. the term $(t,y,z)\mapsto D_x\mathcal{H}(t,X_t^{\alpha},y,z,\alpha_{t})$) is Lipschitz in $(y,z)$ uniformly in $(t,\omega)$. 
The well-posedness of~\eqref{adjoint equation} is classical, see~\cite{pardoux1990adapted} or \cite[Theorem 4.3.1]{zhang2017backward}.

\subsection{Existence of Solutions to the the Gradient Flow system}
\label{sect:existence}
In this section we state precisely the main result concerning the existence and uniqueness of solutions to the gradient flow system \eqref{Gradient_flow_system_intro}
as well as the assumptions.
\begin{assumption}\label{additional_assumptions}
For all $(t,x,a) \in [0,T]\times \mathbb R^d \times \mathbb{R}^{p}$ we have
\begin{align*}
&|D_a b(t,x,a)| + |D_a \sigma(t,x,a)| \leq K\,,\\
&|D_x D_a b(t,x,a)|+|D_{x}D_{a}\sigma(t,x,a)|+|D_a D_x f(t,x,a)|\leq K\,.
\end{align*}
\end{assumption}
\begin{assumption}
\label{ass: D-xf and D_x g}
For any $(t,x,a)\in[0,T]\times\mathbb{R}^{d}\times \mathbb{R}^{p}$,
$
|D_{x}f(t,x,a)|+|D_{x}g(x)|\leq K.
$
\end{assumption}
\begin{assumption}\label{ass: one for Gateuax}
For any $(t,x,a)\in[0,T]\times\mathbb{R}^{d}\times\mathbb{R}^{p}$, 
\begin{align*}
|D_a f(t,x,a)|+|D_{x}f(t,x,a)|\leq K(1+|x|+|a|).
\end{align*}
\end{assumption}
\begin{assumption}
\label{assumption_for_uncontrolled_vol} 
For all $(t,x,a)\in[0,T]\times\mathbb{R}^{d}\times \mathbb{R}^{p}$,
\begin{align*}
|D_{a}^2\sigma(t,x,a)| + |D_a^2 b(t,x,a)|+|D_a^2 f(t,x,a)|\leq K.
\end{align*}
\end{assumption}
\begin{assumption}\label{ass:D_a^2sig}
For all $(t,x,a)\in[0,T]\times\mathbb{R}^{d}\times\mathbb{R}^{p}$ we have $D_a^2\sigma(t,x,a)=0$.
\end{assumption}
\begin{definition}[Solution to the Gradient Flow System \eqref{Gradient_flow_system_intro}]\label{Defn_solu_G_F}
The quadruple $(\alpha, X, Y, Z)$ is a solution to the gradient flow system \eqref{Gradient_flow_system_intro} if 
$\alpha\in AC\left(0,S;L^2_{\text{prog}}\left([0,T]\times\Omega;\mathbb{R}^p\right)\right)$,
 $X,Y,Z\in L^{2}\left(0,S;L^{2}_{\text{prog}}([0,T]\times\Omega;\mathbb{R}^{m})\right)$ with $m$ being $d$, $d$ and $d\times d'$ respectively and if
and for all $s\in [0,S]$, for all $t\in [0,T]$ and $\mathbb P$-almost surely,
\begin{align*}
X_{s,t} & = x+\int_0^tb(r,X_{s,r},\alpha_{s,r})dr+\int_0^t\sigma(r,X_{s,r},\alpha_{s,r})dW_{r}\,,\\
Y_{s,t} & = D_xg(X_{s,T})+\int_t^TD_x\mathcal{H}(r,X_{s,r},Y_{s,r},Z_{s,r},\alpha_{s,r})dr-\int_t^TZ_{s,r}dW_{r}
\end{align*}
and for almost every $s\in [0,S]$ we have
\begin{align}\label{eqn:GF}
\tfrac{d}{ds}\alpha_{s}=D_a\mathcal{H}(t,X_{s},Y_{s},Z_{s},\alpha_{s}),\enspace\alpha_{0}=\alpha^0
\end{align}
understood in the $L^2_{\text{prog}}\left([0,T]\times\Omega;\mathbb{R}^p\right)$ sense.
\end{definition}
\begin{theorem}[Existence and uniqueness of gradient flow system]
\label{thm:existence_GF}
Let Assumptions \ref{Assumptions_Old}, \ref{Ass_2nd_derivative_sapce}, \ref{additional_assumptions}, \ref{ass: D-xf and D_x g} , \ref{ass: one for Gateuax}, \ref{assumption_for_uncontrolled_vol} and \ref{ass:D_a^2sig} hold.
Let $S>0$.
Then there gradient flow system \eqref{Gradient_flow_system_intro} admits a unique solution in the sense of Definition \ref{Defn_solu_G_F}.
\end{theorem}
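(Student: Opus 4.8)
The plan is to recast the gradient flow system~\eqref{Gradient_flow_system_intro} as a single ordinary differential equation in the Hilbert space $H:=L^2_{\text{prog}}([0,T]\times\Omega;\mathbb{R}^p)$ and to solve it by a fixed-point argument. Given $\alpha\in H$, Theorem~\ref{2nd-moments-controlled-SDEs} provides the unique solution $X^\alpha$ of~\eqref{controlled_SDE}; Assumptions~\ref{Assumptions_Old} and~\ref{Ass_2nd_derivative_sapce} then make the driver $(t,y,z)\mapsto D_x\mathcal H(t,X^\alpha_t,y,z,\alpha_t)$ of~\eqref{adjoint equation} Lipschitz in $(y,z)$ uniformly in $(t,\omega)$ with square-integrable data, so~\eqref{adjoint equation} has a unique solution $(Y^\alpha,Z^\alpha)$ in the sense of Definition~\ref{Solution_to_Adjoint}. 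Define
\[
\Phi(\alpha):=D_a\mathcal H(\cdot,X^\alpha,Y^\alpha,Z^\alpha,\alpha)=D_ab(\cdot,X^\alpha,\alpha)\cdot Y^\alpha+D_a\sigma(\cdot,X^\alpha,\alpha):Z^\alpha+D_af(\cdot,X^\alpha,\alpha)\,.
\]
Using $|D_ab|,|D_a\sigma|\le K$ (Assumption~\ref{additional_assumptions}), $|D_af|\le K(1+|x|+|a|)$ (Assumption~\ref{ass: one for Gateuax}), the moment bound for $X^\alpha$ and the square-integrability of $(Y^\alpha,Z^\alpha)$, one checks $\Phi(\alpha)\in H$; moreover a quadruple $(\alpha,X,Y,Z)$ solves~\eqref{Gradient_flow_system_intro} in the sense of Definition~\ref{Defn_solu_G_F} precisely when $\alpha\in AC(0,S;H)$ solves the Banach-space ODE $\tfrac{d}{ds}\alpha_s=-\Phi(\alpha_s)$, $\alpha_0=\alpha^0$, with $X_s=X^{\alpha_s}$, $Y_s=Y^{\alpha_s}$, $Z_s=Z^{\alpha_s}$. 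Thus it suffices to prove existence and uniqueness for this ODE.

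The two properties of $\Phi$ I would establish are a linear growth bound $\|\Phi(\alpha)\|_H\le C(1+\|\alpha\|_H)$ and a Lipschitz estimate $\|\Phi(\alpha)-\Phi(\beta)\|_H\le C\|\alpha-\beta\|_H$ with $C$ independent of $\alpha,\beta$. Both rest on \emph{uniform} control of the adjoint BSDE. Since the adjoint driver $D_x\mathcal H$ is affine in $(y,z)$ with coefficients $D_xb,D_x\sigma$ bounded by $K$, free term $D_xf$ bounded by $K$ (Assumption~\ref{ass: D-xf and D_x g}), and terminal value $D_xg(X^\alpha_T)$ bounded by $K$, the adjoint equation is a \emph{linear} BSDE with bounded data; BMO-martingale estimates then yield $\|Y^\alpha\|_{S^\infty}+\big\|\int_0^\cdot Z^\alpha_r\,dW_r\big\|_{\mathrm{BMO}}\le C$ uniformly in $\alpha$. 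Together with $\E\sup_t|X^\alpha_t|^2\le C(1+|x|^2)$ this gives the linear growth of $\Phi$.

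For the Lipschitz estimate write $\delta X=X^\alpha-X^\beta$, $\delta Y=Y^\alpha-Y^\beta$, $\delta Z=Z^\alpha-Z^\beta$. Standard SDE stability (Grönwall using $|D_xb|,|D_x\sigma|,|D_ab|,|D_a\sigma|\le K$) gives $\E\sup_t|\delta X_t|^2\le C\|\alpha-\beta\|_H^2$. The pair $(\delta Y,\delta Z)$ solves a BSDE whose driver is affine in $(\delta Y,\delta Z)$ with coefficients bounded by $K$ plus a source $\xi$ which, after adding and subtracting and using $|D^2_xb|,|D^2_x\sigma|,|D^2_xf|\le K$ (Assumption~\ref{Ass_2nd_derivative_sapce}), $|D_xD_ab|,|D_xD_a\sigma|,|D_aD_xf|\le K$ (Assumption~\ref{additional_assumptions}) and $\|Y^\beta\|_{S^\infty}\le C$, satisfies $|\xi_s|\le C(|\delta X_s|+|\alpha_s-\beta_s|)(1+|Z^\beta_s|)$, together with a terminal condition bounded by $K|\delta X_T|$. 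The appearance of $|Z^\beta_s|$, which belongs only to BMO and is not bounded, means the elementary $L^2$ a priori BSDE estimate is not available; controlling the resulting $Z^\beta$-weighted quantities (e.g.\ terms of the type $\int_0^T(|\delta X_s|+|\alpha_s-\beta_s|)^2|Z^\beta_s|^2\,ds$) by BMO-martingale tools — the energy inequality $\E[(\langle M^\beta\rangle_T-\langle M^\beta\rangle_\tau)^m\mid\mathcal F_\tau]\le m!\,\|M^\beta\|_{\mathrm{BMO}}^{2m}$ for $M^\beta=\int Z^\beta\,dW$, a Girsanov change of measure absorbing the bounded $\delta Z$-coefficient together with a reverse Hölder inequality, and Itô's formula applied to $s\mapsto|\delta X_s|^2$ — is the main obstacle. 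Here Assumption~\ref{ass:D_a^2sig} ($D_a^2\sigma=0$) is used to kill the otherwise problematic $|\alpha_s-\beta_s|\,|Z^\beta_s|$ contribution coming from the $a$-derivative of $\mathcal H$. Granting these estimates one obtains $\E[\sup_t|\delta Y_t|^2+\int_0^T|\delta Z_t|^2\,dt]\le C\|\alpha-\beta\|_H^2$, and expanding $\Phi(\alpha)-\Phi(\beta)$ into the differences of $D_ab\cdot Y$, $D_a\sigma:Z$ and $D_af$ and estimating each term (also using $|D_a^2b|,|D_a^2f|\le K$ from Assumption~\ref{assumption_for_uncontrolled_vol}) yields $\|\Phi(\alpha)-\Phi(\beta)\|_H\le C\|\alpha-\beta\|_H$.

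With $\Phi:H\to H$ of linear growth and Lipschitz, existence and uniqueness on $[0,S]$ follow from the Picard--Lindel\"of theorem in the Banach space $H$: the map $(\mathcal T\alpha)_s:=\alpha^0-\int_0^s\Phi(\alpha_r)\,dr$ is a contraction on $C([0,S];H)$ for a suitably exponentially weighted sup-norm, and the a priori bound $\|\alpha_s\|_H\le(\|\alpha^0\|_H+1)e^{Cs}$ from the linear growth and Grönwall rules out blow-up before time $S$. The unique fixed point lies in $AC(0,S;H)$ and satisfies $\tfrac{d}{ds}\alpha_s=-\Phi(\alpha_s)$ for a.e.\ $s$; setting $X_s=X^{\alpha_s}$, $Y_s=Y^{\alpha_s}$, $Z_s=Z^{\alpha_s}$ and using the SDE/BSDE stability (so that $s\mapsto(X_s,Y_s,Z_s)$ is continuous) together with the uniform bounds gives $X,Y,Z\in L^2(0,S;L^2_{\text{prog}})$ and the integral identities of Definition~\ref{Defn_solu_G_F}; uniqueness of the SDE and of the adjoint BSDE then upgrades uniqueness of $\alpha$ to uniqueness of the whole quadruple. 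I expect the verification that $\Phi$ is Lipschitz — concretely, the BMO-based control of the $Z^\beta$-weighted cross terms in the adjoint-BSDE stability estimate — to be by far the most delicate step.
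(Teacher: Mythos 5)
Your proposal is correct and follows essentially the same route as the paper: the paper's Lemma~\ref{lemma L2 cont of DaH} establishes precisely the Lipschitz continuity of $\alpha\mapsto D_a\mathcal H(X^\alpha,Y^\alpha,Z^\alpha,\alpha)$ on $L^2_{\text{prog}}([0,T]\times\Omega;\mathbb R^p)$ via the uniform BMO bounds of Lemma~\ref{technical_result_for_BSDEs}, the Fefferman-type estimate of Lemma~\ref{lem:fefferman ext} for the $Z$-weighted terms, and the SDE/BSDE stability of Lemma~\ref{lemma bsde stability}, after which the proof of Theorem~\ref{thm:existence_GF} runs the same Picard fixed-point argument (the paper iterates the map $\Psi$ to get a factorial contraction constant rather than using your exponentially weighted norm, an immaterial variant). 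You correctly identified the BMO control of the $Z^\beta$-weighted cross terms and the role of Assumption~\ref{ass:D_a^2sig} as the crux.
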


\subsection{Convergence of the Modified MSA to a Gradient Flow}\label{Intro: Convergence to GF} 
A key observation is that the cost functional decreases along the sequence of controls generated by iterates of the MMSA, see Remark~\ref{rmk:cost_functional_increment_bound} below which is itself a consequence of the following lemma.
\begin{lemma}\label{cost_functional_increment_bound}
Let Assumptions \ref{Assumptions_Old}, \ref{Ass_2nd_derivative_sapce}, \ref{additional_assumptions} and \ref{ass: D-xf and D_x g} be in hold.
Let $\varphi$ and $\theta$ be arbitrary admissible controls. Then there exists a constant $C_{J,\mathcal H}>0$ such that
\begin{align}\label{cost_functional_bound}
J(\varphi)-J(\theta)
\leq&\E\left[\int_{0}^{T}\mathcal{H}(t,X_t^{\theta},Y_{t}^{\theta},Z_{t}^{\theta},\varphi_{t})-\mathcal{H}(t,X_t^{\theta},Y_{t}^{\theta},Z_{t}^{\theta},\theta_{t})dt\right]\\
&+C_{J,\mathcal H}\E\left[\int_{0}^{T}|\varphi_{t}-\theta_t|^{2}dt\right]\nonumber.
\end{align}
\end{lemma}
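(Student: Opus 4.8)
The plan is to convert $J(\varphi)-J(\theta)$ into an exact identity consisting of the Hamiltonian increment appearing on the right of~\eqref{cost_functional_bound} plus a remainder, and then to bound the remainder by $C\,\E\int_0^T|\varphi_t-\theta_t|^2\,dt$. Write $\Delta X_t:=X_t^\varphi-X_t^\theta$, so $\Delta X_0=0$; since $b,\sigma$ are Lipschitz in $x$ (Assumption~\ref{Assumptions_Old}) and in $a$ (by $|D_ab|,|D_a\sigma|\le K$, Assumption~\ref{additional_assumptions}), the standard stability estimate yields $\E[\sup_{t\le T}|\Delta X_t|^2]\le C\,\E\int_0^T|\varphi_t-\theta_t|^2\,dt$. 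Applying It\^o's formula to $t\mapsto Y_t^\theta\cdot\Delta X_t$, using the SDEs for $X^\varphi,X^\theta$ and the adjoint equation~\eqref{adjoint equation} for $\theta$, taking expectations (the local-martingale parts are true martingales, using boundedness of $Y^\theta$ and the $L^2$/BMO bounds on $Z^\theta$), and using $b\cdot y+\sigma:z=\mathcal H-f$ to re-express $Y^\theta\cdot(b^\varphi-b^\theta)+(\sigma^\varphi-\sigma^\theta):Z^\theta$, one arrives at
\begin{align*}
J(\varphi)-J(\theta)=\E\int_0^T\big[\mathcal H(t,X_t^\theta,Y_t^\theta,Z_t^\theta,\varphi_t)-\mathcal H(t,X_t^\theta,Y_t^\theta,Z_t^\theta,\theta_t)\big]\,dt+R_1+R_2,
\end{align*}
where $R_2:=\E[g(X_T^\varphi)-g(X_T^\theta)-D_xg(X_T^\theta)\cdot\Delta X_T]$ and $R_1:=\E\int_0^T\big[\mathcal H(t,X_t^\varphi,Y_t^\theta,Z_t^\theta,\varphi_t)-\mathcal H(t,X_t^\theta,Y_t^\theta,Z_t^\theta,\varphi_t)-\Delta X_t\cdot D_x\mathcal H(t,X_t^\theta,Y_t^\theta,Z_t^\theta,\theta_t)\big]\,dt$.

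The term $R_2$ is harmless: Taylor's theorem with $|D_x^2g|\le K$ (Assumption~\ref{Ass_2nd_derivative_sapce}) gives $R_2\le\tfrac K2\,\E|\Delta X_T|^2\le C\,\E\int_0^T|\varphi_t-\theta_t|^2\,dt$. For $R_1$, write its integrand as $B_t\cdot Y_t^\theta+\Sigma_t:Z_t^\theta+F_t$, where for $\chi\in\{b,\sigma,f\}$ the corresponding quantity is $\chi(t,X_t^\varphi,\varphi_t)-\chi(t,X_t^\theta,\varphi_t)-D_x\chi(t,X_t^\theta,\theta_t)\Delta X_t$; split this as a second-order Taylor remainder in $x$ (at frozen control $\varphi_t$) plus $\big[D_x\chi(t,X_t^\theta,\varphi_t)-D_x\chi(t,X_t^\theta,\theta_t)\big]\Delta X_t$. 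By $|D_x^2\chi|\le K$ (Assumption~\ref{Ass_2nd_derivative_sapce}) and $|D_xD_ab|,|D_xD_a\sigma|,|D_aD_xf|\le K$ (Assumption~\ref{additional_assumptions}) this gives $|B_t|,|\Sigma_t|,|F_t|\le C\big(|\Delta X_t|^2+|\varphi_t-\theta_t||\Delta X_t|\big)$. Since $(Y^\theta,Z^\theta)$ solves a linear BSDE whose terminal value $D_xg(X_T^\theta)$ and free term $D_xf(t,X_t^\theta,\theta_t)$ are bounded by $K$ (Assumptions~\ref{ass: D-xf and D_x g}, \ref{Assumptions_Old}), $Y^\theta$ is bounded uniformly in $\theta$; hence, after Young's inequality and the stability estimate, the contributions of $B_t\cdot Y_t^\theta$ and $F_t$ to $R_1$ are $\le C\,\E\int_0^T|\varphi_t-\theta_t|^2\,dt$.

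The only delicate term is $\E\int_0^T\Sigma_t:Z_t^\theta\,dt$, because $Z^\theta$ is merely square-integrable (not bounded) while, for admissible $\varphi,\theta$, $\Delta X$ enjoys only second moments. This is where the estimates of Section~\ref{sect:existence of solut} enter: the martingale $M:=\int_0^\cdot Z_t^\theta\,dW_t$ is BMO with $\|M\|_{\mathrm{BMO}}$ bounded by a constant depending only on $K,T$, uniformly in $\theta$. Using $|Z_t^\theta|\le\tfrac12(1+|Z_t^\theta|^2)$ and Young's inequality on the two pieces of $|\Sigma_t:Z_t^\theta|\le C\big(|\Delta X_t|^2+|\varphi_t-\theta_t||\Delta X_t|\big)|Z_t^\theta|$, everything reduces—up to terms already controlled—to bounding $\E\int_0^T|\Delta X_t|^2|Z_t^\theta|^2\,dt$. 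Writing $N_t:=\sup_{s\le t}|\Delta X_s|^2$, which is nonnegative, nondecreasing, continuous and adapted with $N_0=0$, one has $|\Delta X_t|^2\le N_t$ and, via the BMO energy inequality (using $\E\int_0^T N_t\,d\langle M\rangle_t=\E\int_0^T(\langle M\rangle_T-\langle M\rangle_t)\,dN_t\le\|M\|_{\mathrm{BMO}}^2\,\E N_T$),
\begin{align*}
\E\int_0^T|\Delta X_t|^2|Z_t^\theta|^2\,dt\le\E\int_0^T N_t\,d\langle M\rangle_t\le\|M\|_{\mathrm{BMO}}^2\,\E\Big[\sup_{t\le T}|\Delta X_t|^2\Big]\le C\,\E\int_0^T|\varphi_t-\theta_t|^2\,dt.
\end{align*}
Combining the three contributions to $R_1$ with the bound on $R_2$ gives $R_1+R_2\le C\,\E\int_0^T|\varphi_t-\theta_t|^2\,dt$, which is the claim with $C_{J,\mathcal H}:=C$. \emph{The} hard part is precisely this last step: the clash between the low integrability of $\Delta X$ for merely square-integrable controls and the unboundedness of the adjoint $Z^\theta$, overcome only by exploiting the BMO structure of the adjoint equation together with the monotonicity of the running maximum $N_t$.
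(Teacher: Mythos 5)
Your proposal is correct and follows essentially the same route as the paper's proof: the same decomposition into the Hamiltonian increment at the $\theta$-state plus Taylor remainders in $x$ (controlled by $|D_x^2 b|,|D_x^2\sigma|,|D_x^2 f|,|D_x^2 g|\leq K$ and the mixed bounds $|D_xD_a\cdot|\leq K$), the uniform $\mathbb H^\infty$ bound on $Y^\theta$, and the BMO property of $Z^\theta\circ W$ to tame the terms of the form $\E\int_0^T|\Delta X_t|^2|Z_t^\theta|^2\,dt$. The only cosmetic difference is that you invoke the BMO energy inequality with the running maximum $N_t=\sup_{s\le t}|\Delta X_s|^2$ where the paper uses Fefferman's inequality (via its Lemma \ref{lem:fefferman ext}); these yield the same estimate.
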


This lemma was originally proved in \cite{kerimkulov2021modified} but under the assumption $D^2_x \sigma = 0$.
We will relax this following the BMO technique demonstrated in~\cite{ji2021modified}, see Section \ref{proof:cost_functional_increment_bound}. 

\begin{assumption}
\label{new lambda convexity}
There exists a constant $\lambda$ such that for any $(t,x,y,z)\in[0,T]\times\mathbb{R}^{d}\times\mathbb{R}^{d}\times\mathbb{R}^{d\times d'}$ the map
$
a \mapsto \mathcal{H}(t,x,y,z,a)+\tfrac{\lambda}{2}|a|^{2}    
$
is convex. 
\end{assumption}

The use of equality in~\eqref{gradient_requirement} in Algorithm~\ref{alg mmsa} is justified by Lemma~\ref{Well-defined-gradien-proposition} and Remark~\ref{Well-Posedness-of-gradient-condition} which shows that for $\tau > \lambda^{-1}$ the $\argmin$ in (\ref{gradient_requirement}) is single-valued.
Moreover, it follows from Lemma~\ref{cost_functional_increment_bound} that the sequence $(J(\alpha^n))_n$ is decreasing, see Remark \ref{rmk:cost_functional_increment_bound} below.
This justifies the stopping criteria for Algorithm~\ref{alg mmsa}. 

\begin{remark}\label{rmk:cost_functional_increment_bound}
Provided that $\frac{1}{2\tau}\geq C_{J,\mathcal{H}}$ we have $J(\alpha^{n+1})\leq J(\alpha^n)$ for any $n\in \mathbb N$.
Indeed, from Lemma \ref{cost_functional_increment_bound} with $\varphi=\alpha^{n+1}$ and $\theta=\alpha^{n}$ we have 
\begin{align*}
J(\alpha^{n+1})-J(\alpha^{n})\leq&\E\int_0^T \left[\mathcal{H}\left(t,X^{n}_{t},Y^{n}_{t},Z^{n}_{t},\alpha^{n+1}_t\right)-\mathcal{H}\left(t,X^{n}_{t},Y^{n}_{t},Z^{n}_{t},\alpha^{n}_t\right)\right]dt\\
&+C_{J,\mathcal{H}}\E\int_0^T\left|\alpha_t^{n+1}-\alpha_t^{n}\right|^{2}dt
\end{align*}
and from \eqref{gradient_requirement} we have 
\begin{align*}
\E\int_0^T\mathcal{H}\left(t,X^{n}_{t},Y^{n}_{t},Z^{n}_{t},\alpha^{n+1}_t\right)dt+\frac{1}{2\tau}\E\int_0^T|\alpha_t^{n+1}-\alpha_t^n|^2\,dt \\
\leq \E\int_0^T\mathcal{H}\left(t,X^{n}_{t},Y^{n}_{t},Z^{n}_{t},\alpha^{n}_t\right)\,dt\,.
\end{align*}
Hence $J(\alpha^{n+1})-J(\alpha^{n}) \leq 0$.
\end{remark}

Note that the first order optimality condition of calculus implies that~\eqref{gradient_requirement} is equivalent to the implicit Euler scheme
\begin{equation}\label{implicit_eulers}
\tfrac{\alpha_{t}^{n}-\alpha_{t}^{n-1}}{\tau}=-D_{a}\mathcal{H}(t,X_{t}^{n-1},Y_{t}^{n-1},Z_{t}^{n-1},\alpha_{t}^{n})\enspace\text{for each $(t,\omega)\in[0,T]\times\Omega$}. 
\end{equation}

Fix $S>0$. 
We will show that, after appropriate interpolation, the modified MSA algorithm converges to a gradient flow on $[0,S]$.
To that end we let $N_{\tau}\in \mathbb N$ and set $\tau = S / N_{\tau}$. 
The implicit Euler scheme~\eqref{implicit_eulers} leads to a sequence $\left(\alpha^{n}_{t}\right)_{n\in\mathbb{N}}$, 
where $\alpha^n \in L^{2}_{\text{prog}}([0,T]\times\Omega; \mathbb R^p)$ for each $n\in \mathbb \{1,\cdots,N_{\tau}\}$.
From these we define 
\begin{align}
\label{interp}
\begin{cases}
\hat{\alpha}^{\tau}_{s,t}=\alpha^{n-1}_t+\left(\frac{s-(n-1)\tau}{\tau}\right)\left(\alpha^{n}_t-\alpha^{n-1}_t\right)\,, \,\,\,t \in [0,T]\,, \,\,\, s\in ((n-1)\tau,n\tau]\,,\\
\alpha^{\tau,+}_{s,t}=\alpha^{n}_{t} \,, \,\,\, t \in [0,T]\,,\,\,\,t \in [0,T]\,, \,\,\, s\in ((n-1)\tau,n\tau]\\
\alpha^{\tau,-}_{s,t}=\alpha^{n-1}_{t} \,, \,\,\, t \in [0,T]\,,\,\,\,t \in [0,T]\,, \,\,\, s\in ((n-1)\tau,n\tau]
\end{cases}
\end{align}
and for $s=0$, we set $\hat{\alpha}^{\tau}_{0,t}=\alpha^{0}_{t}$, $\alpha^{\tau,+}_{0,t}=\alpha^{0}_{t}$ and $\alpha^{\tau,-}_{0,t}=\alpha^{0}_{t}$. 
Similarly, for the sequences $\left(X^{n}\right)_{n},\left(Y^{n}\right)_{n}$ and $\left( Z^{n}\right)_{n}$ we interpolate as follows.
For each $(t,\omega)\in[0,T]\times\Omega$ and $n\in\{1,\dots,N_{\tau}\}$ we define
\begin{align*}
X_{s,t}^{\tau,-}=X_{t}^{n-1},\enspace Y_{s,t}^{\tau,-}=Y_{t}^{n-1},\enspace Z_{s,t}^{\tau,-}=Z_t^{n-1}\enspace t \in [0,T]\,, \,\,\, s\in ((n-1)\tau,n\tau],
\end{align*}
and for $s=0$ we set $X_{0,t}^{\tau,-}=X_{t}^{0}$, $\enspace Y_{0,t}^{\tau,-}=Y_{t}^{0}$ and $\enspace Z_{s,t}^{\tau,-}=Z_t^0$.

We have introduced another time dimension: we now have $s\in [0,S]$ which denotes the gradient flow time.
There is still the time $t\in[0,T]$ corresponding to the original time in the stochastic control problem. 
For each $s \in [0,S]$ we have $\hat \alpha^\tau_s$, $\alpha^{\tau,+}_s$ and $\alpha^{\tau,-}_s$ in $L^{2}_{\text{prog}}([0,T]\times\Omega; \mathbb R^p)$.
That is, for each $s\in [0,S]$ we are getting an admissible open loop control for the original control problem.

The map $\hat{\alpha}^{\tau}:[0,S]\rightarrow L^{2}_{\text{prog}}([0,T]\times\Omega;\mathbb{R}^{p})\enspace:\enspace s\mapsto \hat{\alpha}_{s}^{\tau}$ is piecewise linear, continuous hence absolutely continuous, and therefore has a distributional derivative, satisfying the following: for $s\in((n-1)\tau,n\tau]$ and $t\in[0,T]$
\begin{equation}
\begin{split}
\label{discrete_derivative}
\tfrac{d}{ds}\hat{\alpha}_{s,t}^{\tau} & = \tfrac{\alpha_{t}^{n}-\alpha^{n-1}_{t}}{\tau}=-D_{a}\mathcal{H}(t,X_t^{n-1},Y_t^{n-1},Z_t^{n-1},\alpha_{t}^n)\\
& = -D_{a}\mathcal{H}(t,X_{s,t}^{\tau,-},Y_{s,t}^{\tau,-},Z_{s,t}^{\tau,-},\alpha_{s,t}^{\tau,+})
\end{split}
\end{equation}
Hence~\eqref{discrete_derivative} can be written as follows
\begin{align}\label{discrete_derivative_int}
\hat{\alpha}^\tau_{s,t}=\alpha^0_t-\int_0^sD_{a}\mathcal{H}(t,X_{r,t}^{\tau,-},Y_{r,t}^{\tau,-},Z_{r,t}^{\tau,-},\alpha_{r,t}^{\tau,+})dr
\end{align}
Our aim is to prove that the sequences $\left(\hat{\alpha}^{\tau}\right)_{\tau}$,$\left(\alpha^{\tau,+}\right)_{\tau}$, $\left(\alpha^{\tau,-}\right)_{\tau}$, $\left(X^{\tau,-}\right)_{\tau}$ $\left(Y^{\tau,-}\right)_{\tau}$ and $\left(Z^{\tau,-}\right)_{\tau}$ converge  as $\tau \to 0$ 
with rate $\tau$ to the solution of~\eqref{Gradient_flow_system_intro}.
To that end we need to impose the following assumption.
\begin{assumption}\label{compactness_of_orbits_assumption}
There exists a constant $K>0$ such that for any $t\in[0,T]$, $a\in \mathbb{R}^p$ and $x\in \mathbb{R}^{d}$, we have
$f(t,x,a)\geq -K + K^{-1}|a|^2$ and $g(x)\geq -K$.

\end{assumption}

\begin{theorem}[Convergence of Discrete Scheme]
\label{thm:convergence_rate}
Let Assumptions \ref{Assumptions_Old}, \ref{Ass_2nd_derivative_sapce}, \ref{additional_assumptions}, \ref{ass: D-xf and D_x g}, \ref{assumption_for_uncontrolled_vol}, \ref{new lambda convexity}, \ref{ass:D_a^2sig}, \ref{compactness_of_orbits_assumption}  hold. 
Let
$\alpha\in  AC\left(0,S;L^2_{\text{prog}}\left([0,T]\times\Omega;\mathbb{R}^p\right)\right)$ and $X, Y, Z$ in $L^{2}\left(0,S;L^{2}_{\text{prog}}([0,T]\times\Omega;\mathbb{R}^{m})\right)$ with $m=d$ for $X,Y$ and $m=d\times d'$ for $Z$
be the unique solution, in the sense of Definition \ref{Defn_solu_G_F}, to the gradient flow system \eqref{Gradient_flow_system_intro}.
Then
there exists a constant $C$ depending on 
$K$, $T$, $C_{\mathbb{H}^\infty,BMO}$, $C_{J,\mathcal{H}}$ and $\alpha^0$ such that
\begin{align*}
\left\|\hat{\alpha}^{\tau}-\alpha\right\|+\|\alpha^{\tau,+}-\alpha\|+\|\alpha^{\tau,-}-\alpha\|\leq CS^{\frac{1}{2}}e^{\frac{S}{2}(3+C+\lambda)}\tau,
\end{align*}
where $\|\cdot\|$ represents the supremum norm in $C\left(0,S;L^{2}_{\text{prog}}([0,T]\times\Omega;\mathbb{R}^{p})\right)$.
\end{theorem}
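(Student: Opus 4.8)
The plan is to control $s\mapsto \|\hat\alpha^\tau_s-\alpha_s\|^2$ in $L^2_{\mathrm{prog}}:=L^2_{\mathrm{prog}}([0,T]\times\Omega;\mathbb{R}^p)$ by a Gr\"onwall argument, where throughout I write $\|\cdot\|$ for the norm of $L^2_{\mathrm{prog}}$ and recover the $C(0,S;L^2_{\mathrm{prog}})$-norm of the statement by taking $\sup_{s\in[0,S]}$ at the end. Both curves are absolutely continuous in $s$: $\alpha$ by Definition~\ref{Defn_solu_G_F}, and $\hat\alpha^\tau$ because it is piecewise linear, with $\frac{d}{ds}\hat\alpha^\tau_s=-D_a\mathcal H(X^{\tau,-}_s,Y^{\tau,-}_s,Z^{\tau,-}_s,\alpha^{\tau,+}_s)$ for almost every $s$ by \eqref{discrete_derivative} (I suppress the $t$-argument of $D_a\mathcal H$). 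Using \eqref{eqn:GF} as well, for almost every $s\in[0,S]$,
\begin{equation*}
\tfrac12\tfrac{d}{ds}\|\hat\alpha^\tau_s-\alpha_s\|^2=-\big(\hat\alpha^\tau_s-\alpha_s,\;D_a\mathcal H(X^{\tau,-}_s,Y^{\tau,-}_s,Z^{\tau,-}_s,\alpha^{\tau,+}_s)-D_a\mathcal H(X_s,Y_s,Z_s,\alpha_s)\big),
\end{equation*}
with $(\cdot,\cdot)$ the $L^2_{\mathrm{prog}}$ inner product, and everything reduces to estimating the right-hand side.

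First I would establish three ingredients. \emph{(i) Uniform a priori bounds.} For $\tau$ in the admissible range one has $\tfrac1{2\tau}\ge C_{J,\mathcal H}$, so Remark~\ref{rmk:cost_functional_increment_bound} gives $J(\alpha^n)\le J(\alpha^0)$ for every $n\le N_\tau$; combined with the coercivity $f(t,x,a)\ge -K+K^{-1}|a|^2$, $g(x)\ge -K$ of Assumption~\ref{compactness_of_orbits_assumption} this yields $\sup_{n\le N_\tau}\E\int_0^T|\alpha^n_t|^2\,dt\le C(1+J(\alpha^0))$, a bound independent of $\tau$. Feeding it into Theorem~\ref{2nd-moments-controlled-SDEs} and into the adjoint-BSDE estimates (whose terminal datum $D_xg$ and driver-at-zero $D_xf$ are bounded by $K$ by Assumption~\ref{ass: D-xf and D_x g}), together with the BMO bound on $\int Z^n\,dW$ from Section~\ref{sect:existence of solut}, gives $\tau$-uniform bounds on $\sup_{n\le N_\tau}\big(\|X^n\|+\|Y^n\|+\|Z^n\|+\|Y^n\|_{L^\infty}\big)$; since $N_\tau\tau=S$ these are $\tau$-uniform bounds on $\sup_{s\le S}$ of the interpolants $X^{\tau,-},Y^{\tau,-},Z^{\tau,-},\alpha^{\tau,\pm}$, and the gradient flow solution obeys the analogous bounds by Theorem~\ref{thm:existence_GF}. \emph{(ii) One-step size.} From the implicit Euler identity \eqref{implicit_eulers} and the growth of $D_a\mathcal H$, $\|\alpha^n-\alpha^{n-1}\|=\tau\,\|D_a\mathcal H(X^{n-1},Y^{n-1},Z^{n-1},\alpha^n)\|\le C\tau$ with $C$ from (i), whence $\sup_{s\le S}\big(\|\hat\alpha^\tau_s-\alpha^{\tau,+}_s\|+\|\hat\alpha^\tau_s-\alpha^{\tau,-}_s\|\big)\le C\tau$ and therefore $\|\alpha^{\tau,\pm}_s-\alpha_s\|\le C\tau+\|\hat\alpha^\tau_s-\alpha_s\|$. \emph{(iii) Continuity.} On the $\tau$-uniform ball from (i), $D_a\mathcal H$ is $\lambda$-monotone in $a$ (Assumption~\ref{new lambda convexity}) and Lipschitz in $a$ (using Assumptions~\ref{assumption_for_uncontrolled_vol} and~\ref{ass:D_a^2sig} together with the $L^\infty$ bound on $Y$), and the control-to-state maps $\alpha\mapsto X^\alpha$ and $\alpha\mapsto(Y^\alpha,Z^\alpha)$ are Lipschitz in the $L^2_{\mathrm{prog}}$ topology; these last two are the delicate estimates of Sections~\ref{sect:existence of solut} and~\ref{Existence of the Gradient Flow}, where the BMO-martingale bounds are used and $C_{\mathbb{H}^\infty,BMO}$ enters. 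Since $X^{\tau,-}_s,Y^{\tau,-}_s,Z^{\tau,-}_s$ are exactly the forward and adjoint solutions for the control $\alpha^{\tau,-}_s$, and $X_s,Y_s,Z_s$ those for $\alpha_s$, one gets $\|X^{\tau,-}_s-X_s\|+\|Y^{\tau,-}_s-Y_s\|+\|Z^{\tau,-}_s-Z_s\|\le C\|\alpha^{\tau,-}_s-\alpha_s\|$.

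With these in hand I would split the second factor in the displayed identity as a difference in $a$ plus a difference in $(x,y,z)$. For the $a$-part, writing $\hat\alpha^\tau_s-\alpha_s=(\hat\alpha^\tau_s-\alpha^{\tau,+}_s)+(\alpha^{\tau,+}_s-\alpha_s)$, the contribution of $\alpha^{\tau,+}_s-\alpha_s$ is $\ge-\lambda\|\alpha^{\tau,+}_s-\alpha_s\|^2$ by $\lambda$-monotonicity, while the contribution of $\hat\alpha^\tau_s-\alpha^{\tau,+}_s$ is, by Cauchy--Schwarz, the Lipschitz continuity of $D_a\mathcal H$ in $a$, and (ii), at most $C\tau(\tau+\|\hat\alpha^\tau_s-\alpha_s\|)$; for the $(x,y,z)$-part, (iii) bounds it by $C\|\hat\alpha^\tau_s-\alpha_s\|\,\|\alpha^{\tau,-}_s-\alpha_s\|\le C\|\hat\alpha^\tau_s-\alpha_s\|(\tau+\|\hat\alpha^\tau_s-\alpha_s\|)$. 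Collecting these, using $\|\alpha^{\tau,\pm}_s-\alpha_s\|\le C\tau+\|\hat\alpha^\tau_s-\alpha_s\|$ and Young's inequality to absorb cross terms, I expect to arrive at
\begin{equation*}
\tfrac{d}{ds}\|\hat\alpha^\tau_s-\alpha_s\|^2\le(3+C+\lambda)\,\|\hat\alpha^\tau_s-\alpha_s\|^2+C\tau^2\qquad\text{for almost every }s\in[0,S],
\end{equation*}
with $C$ depending on $K,T,C_{\mathbb{H}^\infty,BMO},C_{J,\mathcal H}$ and $\alpha^0$. Since $\hat\alpha^\tau_0-\alpha_0=0$, Gr\"onwall's lemma yields $\|\hat\alpha^\tau_s-\alpha_s\|^2\le C\tau^2S\,e^{(3+C+\lambda)S}$ for all $s\le S$, i.e.\ $\|\hat\alpha^\tau-\alpha\|\le CS^{1/2}e^{\frac S2(3+C+\lambda)}\tau$ in $C(0,S;L^2_{\mathrm{prog}})$; combining with $\|\alpha^{\tau,\pm}-\alpha\|\le C\tau+\|\hat\alpha^\tau-\alpha\|$ from (ii) gives the same bound for the two staircase interpolants, which is the assertion.

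The hard part will be ingredient (iii), specifically the Lipschitz continuity of the adjoint-BSDE solution map $\alpha\mapsto(Y^\alpha,Z^\alpha)$ in the $L^2_{\mathrm{prog}}$ topology: since $D^2_x\sigma$ need not vanish, perturbing the state in the driver term $D_x\sigma:z$ produces cross terms of the form $\E\int_0^T|X^\alpha_t-X^\beta_t|^2|Z^\beta_t|^2\,dt$, whose control requires the reverse-H\"older / John--Nirenberg properties of the BMO martingale $\int Z^\beta\,dW$. This, together with the uniform-in-$\tau$ a priori bounds of (i)--(ii) — without which the constant $C$ above would degenerate as $\tau\to0$ — is exactly what upgrades the rate from the $\sqrt\tau$ of a crude energy estimate to the stated $\tau$.
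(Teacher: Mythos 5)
Your proposal is correct and follows essentially the same route as the paper: the same Gr\"onwall argument on $\|\hat\alpha^\tau_s-\alpha_s\|^2$, the same uniform a priori bounds obtained from the monotone decrease of $J$ plus the coercivity of Assumption~\ref{compactness_of_orbits_assumption} (Lemma~\ref{prori estimate 2}), the same $O(\tau)$ one-step bound via the uniform estimate on $D_a\mathcal H$ along the iterates (Lemma~\ref{lem:D_aH_bound}), the same BMO-based Lipschitz stability of $\alpha\mapsto(X^\alpha,Y^\alpha,Z^\alpha)$ and of $D_a\mathcal H$ (Lemmas~\ref{lemma bsde stability}, \ref{lemma L2 cont of DaH}, \ref{3_diff_controls}), and the same use of $\lambda$-monotonicity from Lemma~\ref{lem:D_aH convex}. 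The only (cosmetic) difference is where the monotonicity is applied — you use the pair $(\alpha_s,\alpha^{\tau,+}_s)$ and treat $\hat\alpha^\tau_s-\alpha^{\tau,+}_s$ as an $O(\tau)$ perturbation, while the paper applies it to $(\alpha_s,\hat\alpha^\tau_s)$ and isolates the discrete-versus-interpolant discrepancy in a separate squared term — which leads to the same Gr\"onwall inequality and the same rate.
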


Convergence, with the same rate, of the sequences $\left(X^{\tau,-}\right)_\tau$, $\left(Y^{\tau,-}\right)_\tau$ and $\left(Z^{\tau,-}\right)_\tau$ is an immediate consequence of Theorem~\ref{thm:convergence_rate} and Lemma~\ref{lemma bsde stability}.

If the Hamiltonian is sufficiently strongly convex in $a$ uniformly in all other variables, that is, if Assumption~\ref{new lambda convexity} holds for some sufficiently negative $\lambda$, then the estimate is uniform $S$.
The proof of Theorem \ref{thm:convergence_rate} is given in Section \ref{Existence of the Gradient Flow}. 

\subsection{Convergence of the Gradient Flow}\label{Intro: convergence to OC}
The convergence we get depends on the convexity (or lack of) in the original control problem.
The energy identity given in Lemma \ref{functional_decreases_along_flow} will be fundamental to much of the analysis.
\begin{lemma}\label{functional_decreases_along_flow}
Assume that the quadruple $\left(\alpha_{s},X_{s},Y_{s},Z_{s}\right)$ is a solution to \eqref{Gradient_flow_system_intro}. Let  Assumptions \ref{Assumptions_Old}, \ref{additional_assumptions} and \ref{ass: one for Gateuax} hold and assume $\alpha^{0}\in L^{2}_{\text{prog}}([0,T]\times\Omega;\mathbb{R}^{p})$. 
Then for any $s\geq 0$
\begin{align}\label{derivative_along_flow}
\tfrac{d}{ds}J\left(\alpha_{s}\right)=-\left\|D_a\mathcal{H}(t,X_{s},Y_{s},Z_{s},\alpha_{s})\right\|_{L^{2}_{\text{prog}}\left([0,T]\times\Omega;\mathbb{R}^p\right)}^2\leq 0.
\end{align}
\end{lemma}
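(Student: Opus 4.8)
\textbf{Proof proposal for Lemma~\ref{functional_decreases_along_flow}.}

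The plan is to mimic the finite-dimensional chain-rule computation $\frac{d}{ds}F(a_s) = D_aF(a_s)\cdot\dot a_s = -|D_aF(a_s)|^2$ sketched in the introduction, but now in the infinite-dimensional Hilbert space $L^2_{\text{prog}}([0,T]\times\Omega;\mathbb R^p)$ and with the complication that $J$ depends on $\alpha_s$ not only directly through the running and terminal costs but also through the state process $X_{s}$. First I would establish that $s \mapsto J(\alpha_s)$ is differentiable and compute its derivative via a Gateaux-derivative (first variation) formula for $J$. The key identity I would aim to prove is that the Fréchet derivative of $J$ at an admissible control $\beta$, in a direction $\gamma$, equals $(D_a\mathcal H(t,X^\beta,Y^\beta,Z^\beta,\beta),\gamma)_{L^2}$; this is precisely the content underlying~\eqref{eqn:convexity_notion} and should be available from (or provable by the same argument as) Lemma~\ref{lemma: convexity of cost}. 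Granting this, the chain rule gives
\[
\tfrac{d}{ds}J(\alpha_s) = \big(D_a\mathcal H(t,X_s,Y_s,Z_s,\alpha_s),\tfrac{d}{ds}\alpha_s\big)_{L^2},
\]
and substituting the gradient-flow equation~\eqref{eqn:GF}, $\frac{d}{ds}\alpha_s = -D_a\mathcal H(t,X_s,Y_s,Z_s,\alpha_s)$, yields exactly the claimed $-\|D_a\mathcal H(\cdot)\|_{L^2}^2$.

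The technical heart lies in two places. First, the first-variation formula for $J$: one classically introduces the variational (tangent) process $V$ solving the linearised SDE obtained by differentiating~\eqref{controlled_SDE} in the control direction $\gamma$, writes $\frac{d}{d\epsilon}J(\beta+\epsilon\gamma)|_{\epsilon=0}$ as an expectation involving $D_xf\cdot V$, $D_af\cdot\gamma$ and $D_xg(X_T)\cdot V_T$, then applies the Itô product rule to $Y^\beta_t\cdot V_t$ and uses the adjoint BSDE~\eqref{adjoint equation} to cancel the terms containing $V$, leaving only $\E\int_0^T D_a\mathcal H(t,X^\beta_t,Y^\beta_t,Z^\beta_t,\beta_t)\cdot\gamma_t\,dt = (D_a\mathcal H(X^\beta,Y^\beta,Z^\beta,\beta),\gamma)$. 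For this one needs the growth/integrability bounds supplied by Assumptions~\ref{Assumptions_Old}, \ref{additional_assumptions} and~\ref{ass: one for Gateuax} (the linear growth of $D_af,D_xf$ in $(x,a)$ is what makes the cross terms integrable), together with the second-moment estimate of Theorem~\ref{2nd-moments-controlled-SDEs} and the standard $L^2$ bounds on $(Y^\beta,Z^\beta)$. Second, I must justify passing from this Fréchet-differentiability of $J$ to differentiability of the composition $s\mapsto J(\alpha_s)$: since $\alpha\in AC(0,S;L^2_{\text{prog}})$ by Definition~\ref{Defn_solu_G_F}, it is differentiable for a.e.\ $s$ with $\frac{d}{ds}\alpha_s\in L^2_{\text{prog}}$, and I need local Lipschitz continuity (or at least continuity) of the map $\beta\mapsto D_a\mathcal H(X^\beta,Y^\beta,Z^\beta,\beta)$ on $L^2_{\text{prog}}$ so that the chain rule for the composition of a (Lipschitz-on-bounded-sets) $C^1$ functional with an absolutely continuous curve applies; the stability estimates for the SDE and BSDE (the analogue of Lemma~\ref{lemma bsde stability}) plus the a priori bounds from Section~\ref{Existence of the Gradient Flow} deliver this, and the nonnegativity $-\|\cdot\|^2\le 0$ is then immediate.

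The step I expect to be the main obstacle is making the first-variation / adjoint-cancellation argument fully rigorous under the stated growth conditions rather than under global Lipschitz or boundedness assumptions: controlling the variational process $V$ and its interaction with $(Y,Z)$ requires care because $D_af$ and $D_xf$ are only of linear growth in $(x,a)$, so one must combine the uniform second-moment bound on $X^\beta$ with Cauchy–Schwarz and, where the control enters the diffusion, the BMO-martingale estimates flagged in the introduction and developed in Section~\ref{sect:existence of solut}. Everything else — the Itô product rule, the chain rule for the composition, and the sign of the result — is routine once this differentiability machinery is in place.
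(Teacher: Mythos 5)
Your proposal is correct and follows essentially the same route as the paper: the paper's proof also rests on the first-variation formula $\frac{d}{d\delta}J(\alpha+\delta(\alpha'-\alpha))|_{\delta=0}=\E\int_0^T D_a\mathcal H(t,X^\alpha_t,Y^\alpha_t,Z^\alpha_t,\alpha_t)\cdot(\alpha'_t-\alpha_t)\,dt$ (stated there as Lemma~\ref{Gateaux_Derivative} and cited as classical), followed by differentiating $s\mapsto J(\alpha_s)$ and substituting~\eqref{eqnGF-placeholder-none} the gradient-flow equation; the only cosmetic difference is that the paper carries out the difference quotient $h^{-1}(J(\alpha_{s+h})-J(\alpha_s))$ explicitly, justifying the limit interchange by the uniform bounds $\|Y\|_{\mathbb H^\infty}\le C_{\mathbb H^\infty,\,\text{BMO}}$, the $L^2$ bound on $Z$, the linear growth of $D_af$, and the SDE/BSDE stability estimates of Lemma~\ref{lemma bsde stability}, whereas you package the same ingredients as a chain rule for a $C^1$ functional along an absolutely continuous curve. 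Both arguments need exactly the continuity of $\beta\mapsto D_a\mathcal H(X^\beta,Y^\beta,Z^\beta,\beta)$ and the integrability controls you identify, so your plan is sound.
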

This Lemma will be proved in  Section \ref{Sect: convergence to OC}.
Whilst the proof of Lemma \ref{functional_decreases_along_flow} is deferred until Section \ref{Sect: convergence to OC} we provide a heuristic proof (liberally swapping limits and integrals). Assuming for the moment the functional is Gateaux differentiable then 
\begin{equation*}
\begin{split}
\tfrac{d}{ds}J(\alpha_{s}) 
& = \lim_{h\rightarrow 0}\tfrac{J(\alpha_{s+h})-J(\alpha_{s})}{h}\\
&= \lim_{h\rightarrow 0}\tfrac{1}{h}\int_0^1\tfrac{d}{d\delta}J(\alpha_{s+h}+\varepsilon(\alpha_{s+h}-\alpha_{s})+\delta(\alpha_{s+h}-\alpha_{s}))|_{\delta=0}d\varepsilon\,.    
\end{split}
\end{equation*}
If we now recognise the integrand as the Gateaux derivative at the point $\alpha_{s+h}+\varepsilon(\alpha_{s+h}-\alpha_{s})$ in the direction $\alpha_{s+h}-\alpha_{s}$ it follows (from Lemma \ref{Gateaux_Derivative}) that 
\begin{align*}
& \tfrac{d}{ds}J(\alpha_{s}) \\
& =\int_0^{1} \E\biggl[\int_{0}^{T} \!\!\lim_{h\rightarrow 0}D_{a}\mathcal{H}\left(t,X_{t}^{\varepsilon,h},Y_{t}^{\varepsilon,h},Z_{t}^{\varepsilon,h},\alpha_{s,t}+\varepsilon(\alpha_{s+h,t}-\alpha_{s,t})\right) 
\lim_{h\rightarrow 0}\tfrac{\left(\alpha_{s+h,t}-\alpha_{s,t}\right)}{h}dt\biggr]d\varepsilon,
\end{align*}
where 
\begin{align*}
X^{\varepsilon,h}_{t}:=X_{t}^{\alpha_{s,t}+\varepsilon\left(\alpha_{s+h,t}-\alpha_{s,t}\right)},\enspace Y^{\varepsilon,h}_{s,t}:=Y_{t}^{\alpha_{s,t}+\varepsilon\left(\alpha_{s+h,t}-\alpha_{s,t}\right)}\enspace Z^{\varepsilon,h}_{s,t}:=Z_{t}^{\alpha_{s,t}+\varepsilon\left(\alpha_{s+h,t}-\alpha_{s,t}\right)}.
\end{align*}
Assuming we can prove the required convergence for $D_a\mathcal{H}$ and substituting the expression for the gradient flow in \eqref{Gradient_flow_system_intro} the conclusion of Lemma~\ref{functional_decreases_along_flow} follows.
\begin{theorem}
\label{thm: Extending gradient flow}
Let $(\alpha,X,Y,Z)$ be a solution to \eqref{Gradient_flow_system_intro} on the interval $[0,S]$. 
Under Assumptions~\ref{Assumptions_Old}, \ref{additional_assumptions},~\ref{ass: one for Gateuax} and \ref{compactness_of_orbits_assumption}
we have that 
\begin{align*}
\sup_{s\in[0,S]} \|\alpha_s\|_{L^{2}_{\text{prog}}\left([0,T]\times\Omega;\mathbb{R}^p\right)}^2 \leq K(J(\alpha^0)+K(T+1)).
\end{align*}
Note that the constant on the right hand side is independent of $S$. 
Moreover, the solution of~\eqref{Gradient_flow_system_intro} can be extended to all $s\in [0,\infty)$.
\end{theorem}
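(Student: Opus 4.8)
The plan is to combine the energy identity of Lemma~\ref{functional_decreases_along_flow} with the coercivity of $f$ and lower bound on $g$ from Assumption~\ref{compactness_of_orbits_assumption}, and then use the resulting uniform-in-$S$ a priori bound to bootstrap a continuation argument. First I would observe that Lemma~\ref{functional_decreases_along_flow} gives $\tfrac{d}{ds}J(\alpha_s)\leq 0$, so $J(\alpha_s)\leq J(\alpha_0)=J(\alpha^0)$ for all $s\in[0,S]$. On the other hand, Assumption~\ref{compactness_of_orbits_assumption} gives
\begin{align*}
J(\alpha_s) = \E\left[\int_0^T f(t,X_{s,t},\alpha_{s,t})\,dt + g(X_{s,T})\right] \geq -K(T+1) + K^{-1}\,\E\left[\int_0^T |\alpha_{s,t}|^2\,dt\right].
\end{align*}
Rearranging and using $\|\alpha_s\|_{L^2_{\text{prog}}}^2 = \E[\int_0^T |\alpha_{s,t}|^2\,dt]$ yields
\begin{align*}
\|\alpha_s\|_{L^2_{\text{prog}}([0,T]\times\Omega;\mathbb{R}^p)}^2 \leq K\left(J(\alpha_s) + K(T+1)\right) \leq K\left(J(\alpha^0) + K(T+1)\right),
\end{align*}
which holds for every $s\in[0,S]$; taking the supremum gives the claimed bound, and the right-hand side manifestly does not involve $S$.

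For the extension to $[0,\infty)$, the key point is that Theorem~\ref{thm:existence_GF} produces, for every finite horizon $S>0$, a unique solution on $[0,S]$; uniqueness lets us glue these solutions consistently, so a solution exists on $[0,\infty)$ provided it cannot blow up in finite time. The uniform-in-$S$ bound just established controls $\sup_{s}\|\alpha_s\|_{L^2_{\text{prog}}}$ by a constant depending only on the data and $\alpha^0$; feeding this into the a priori estimates for the controlled SDE~\eqref{controlled_SDE} (Theorem~\ref{2nd-moments-controlled-SDEs}) and the adjoint BSDE (Definition~\ref{Solution_to_Adjoint} together with the BSDE estimates of Section~\ref{sect:existence of solut}) bounds $X_s$, $Y_s$, $Z_s$ in their respective $L^2_{\text{prog}}$ norms uniformly in $s$, and in turn, via the growth of $D_a\mathcal H$ (Assumptions~\ref{additional_assumptions} and~\ref{ass: one for Gateuax}), bounds the velocity $\tfrac{d}{ds}\alpha_s = D_a\mathcal H(t,X_s,Y_s,Z_s,\alpha_s)$ in $L^2_{\text{prog}}$ uniformly on any finite interval. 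Since the only obstruction to extending a solution of an ODE in a Banach space is that the trajectory leaves every bounded set in finite time, and here it does not, the solution extends to all of $[0,\infty)$.

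The main obstacle I anticipate is the continuation step rather than the a priori estimate, which is essentially algebraic. Concretely, one must be careful that the local existence theorem really does give solutions on arbitrarily large but finite intervals (this is what Theorem~\ref{thm:existence_GF} asserts for each fixed $S$, so the work is to invoke it correctly together with uniqueness to patch solutions and verify the patched object still satisfies Definition~\ref{Defn_solu_G_F} on the union) and that the $s$-regularity of $\alpha$ — absolute continuity into $L^2_{\text{prog}}$ — is preserved in the limit. The uniform bound on $\|D_a\mathcal H(t,X_s,Y_s,Z_s,\alpha_s)\|_{L^2_{\text{prog}}}$ on each $[0,S]$ is exactly what guarantees this regularity persists, so once the a priori bound is in hand the remaining argument is routine; the only genuine subtlety is tracking the dependence of the intermediate BSDE estimates on $\|\alpha_s\|_{L^2_{\text{prog}}}$ to be sure everything stays finite on each compact $s$-interval.
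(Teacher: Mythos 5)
Your proposal is correct and follows essentially the same route as the paper: the uniform bound comes from combining the monotonicity of $J$ along the flow (Lemma~\ref{functional_decreases_along_flow}) with the coercivity of $f$ and the lower bound on $g$ in Assumption~\ref{compactness_of_orbits_assumption}, and the extension to $[0,\infty)$ is obtained by restarting the flow from $\alpha_S$ via Theorem~\ref{thm:existence_GF} and gluing by uniqueness. The extra care you take over the continuation step is reasonable but not needed beyond what the paper does, since Theorem~\ref{thm:existence_GF} already yields a solution on $[0,S]$ for every finite $S$ and arbitrary $L^2_{\text{prog}}$ initial data.
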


\begin{theorem}
\label{prop: L^2-convergence of D_aH}
Let $(\alpha,X,Y,Z)$ be a solution to \eqref{Gradient_flow_system_intro} on $[0,\infty)$. 
If Assumptions \ref{Assumptions_Old}, \ref{additional_assumptions}, \ref{ass: one for Gateuax} and \ref{ass:D_a^2sig} hold. 
Let $\alpha^{0}\in L^{2}_{\text{prog}}([0,T]\times\Omega;\mathbb{R}^{p})$
then 
\begin{align*}
\lim_{s\rightarrow 0} \left\|D_{a}\mathcal{H}(X_s,Y_s,Z_s,\alpha_s)\right\|_{L^{2}_{\text{prog}}\left([0,T]\times\Omega;\mathbb{R}^p\right)}^2=0.
\end{align*}
\end{theorem}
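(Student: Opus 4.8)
The plan is to mimic the finite-dimensional argument sketched in the introduction: combine the energy identity (Lemma~\ref{functional_decreases_along_flow}) with a lower bound on $J$ and a uniform-in-time continuity estimate for the map $s\mapsto \|D_a\mathcal H(X_s,Y_s,Z_s,\alpha_s)\|^2$, then apply the standard fact that an integrable, uniformly continuous nonnegative function on $[0,\infty)$ tends to $0$. (I would correct the ``$\lim_{s\to 0}$'' in the statement to ``$\lim_{s\to\infty}$''.) First I would integrate \eqref{derivative_along_flow} from $0$ to $S$ to obtain
\begin{align*}
\int_0^S \left\|D_a\mathcal H(t,X_s,Y_s,Z_s,\alpha_s)\right\|_{L^2_{\text{prog}}}^2\,ds = J(\alpha^0) - J(\alpha_S).
\end{align*}
Since Assumption~\ref{compactness_of_orbits_assumption} gives $f(t,x,a)\geq -K$ and $g\geq -K$, we get $J(\alpha_S)\geq -K(T+1)$, hence the left-hand side is bounded by $J(\alpha^0)+K(T+1)$ uniformly in $S$; letting $S\to\infty$ shows $s\mapsto \|D_a\mathcal H(X_s,Y_s,Z_s,\alpha_s)\|_{L^2_{\text{prog}}}^2$ is integrable on $[0,\infty)$.

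The substantive work is the uniform continuity of $\Phi(s):=\|D_a\mathcal H(t,X_s,Y_s,Z_s,\alpha_s)\|_{L^2_{\text{prog}}}^2$ on $[0,\infty)$. I would first record, from Theorem~\ref{thm: Extending gradient flow}, that $\sup_{s\geq 0}\|\alpha_s\|_{L^2_{\text{prog}}}^2$ is bounded by a constant independent of $S$; feeding this into Theorem~\ref{2nd-moments-controlled-SDEs} and the BSDE stability estimates (the BMO-based bounds of Section~\ref{sect:existence of solut}, together with Lemma~\ref{lemma bsde stability}) gives uniform-in-$s$ bounds on $\E[\sup_t |X_{s,t}|^2]$, $\E[\sup_t|Y_{s,t}|^2]$ and $\E[\int_0^T|Z_{s,t}|^2\,dt]$. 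Next I would estimate the increments: for $s'>s$,
\begin{align*}
|\Phi(s')-\Phi(s)| \leq \big(\|D_a\mathcal H(\cdot,\alpha_{s'})\|_{L^2_{\text{prog}}}+\|D_a\mathcal H(\cdot,\alpha_{s})\|_{L^2_{\text{prog}}}\big)\,\|D_a\mathcal H(\cdot,\alpha_{s'})-D_a\mathcal H(\cdot,\alpha_{s})\|_{L^2_{\text{prog}}},
\end{align*}
so it suffices to control the $L^2_{\text{prog}}$-Lipschitz continuity of $s\mapsto D_a\mathcal H(t,X_s,Y_s,Z_s,\alpha_s)$. Using Assumptions~\ref{additional_assumptions}, \ref{ass: one for Gateuax}, \ref{assumption_for_uncontrolled_vol} and \ref{ass:D_a^2sig} (so that $D_a\mathcal H$ is Lipschitz in $(x,y,z,a)$ with at most linear-in-$(x,a)$ growth of $D_a f$, and $D_a\sigma$ bounded), together with the uniform moment bounds just obtained, one bounds this increment by $C\big(\|\alpha_{s'}-\alpha_s\|_{L^2_{\text{prog}}} + \|X_{s'}-X_s\|_{L^2_{\text{prog}}} + \|Y_{s'}-Y_s\| + \|Z_{s'}-Z_s\|\big)$. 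The term $\|\alpha_{s'}-\alpha_s\|_{L^2_{\text{prog}}}$ is controlled because $\alpha\in AC(0,S;L^2_{\text{prog}})$ with $\tfrac{d}{ds}\alpha_s = D_a\mathcal H(X_s,Y_s,Z_s,\alpha_s)$, whose $L^2_{\text{prog}}$-norm is uniformly bounded (it equals $\Phi(s)^{1/2}$, and $\Phi$ is bounded by the same moment estimates); hence $\|\alpha_{s'}-\alpha_s\|_{L^2_{\text{prog}}}\leq C|s'-s|$. The SDE/BSDE differences $\|X_{s'}-X_s\|$, $\|Y_{s'}-Y_s\|$, $\|Z_{s'}-Z_s\|$ are then controlled by $\|\alpha_{s'}-\alpha_s\|_{L^2_{\text{prog}}}$ via the stability lemmas (Lemma~\ref{lemma bsde stability} and the analogous SDE estimate), giving $|\Phi(s')-\Phi(s)|\leq C|s'-s|$ with $C$ independent of $s$, i.e. $\Phi$ is (globally) Lipschitz, a fortiori uniformly continuous.

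With integrability and uniform continuity of the nonnegative function $\Phi$ on $[0,\infty)$ in hand, I would invoke the elementary lemma: if $\Phi\geq 0$ is uniformly continuous and $\int_0^\infty \Phi(s)\,ds<\infty$, then $\Phi(s)\to 0$ as $s\to\infty$ (if not, there is $\varepsilon>0$ and $s_n\to\infty$ with $\Phi(s_n)\geq\varepsilon$; uniform continuity forces $\Phi\geq\varepsilon/2$ on intervals of fixed positive length around each $s_n$, contradicting integrability). This gives the claim. The main obstacle I expect is the uniform-in-$s$ control of the BSDE data $(Y_s,Z_s)$ and their increments: the driver $D_x\mathcal H$ depends on $Z$ through $\sigma:z$, so one genuinely needs the BMO-martingale estimates from Section~\ref{sect:existence of solut} rather than naive Gronwall on the BSDE, and one must check these constants do not degrade as $s\to\infty$ — which is exactly where the $S$-independent bound on $\sup_s\|\alpha_s\|_{L^2_{\text{prog}}}^2$ from Theorem~\ref{thm: Extending gradient flow} is essential. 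Everything else is bookkeeping with the stated assumptions.
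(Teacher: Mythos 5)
Your proposal is correct and follows essentially the same route as the paper: integrate the energy identity of Lemma~\ref{functional_decreases_along_flow} to get integrability of $s\mapsto\|D_a\mathcal H(X_s,Y_s,Z_s,\alpha_s)\|^2$ on $[0,\infty)$, establish uniform continuity of this map via the Lipschitz estimate of Lemma~\ref{lemma L2 cont of DaH} together with the $S$-independent bounds, and conclude by the standard Barbalat-type lemma. You in fact supply more detail than the paper does on the uniform-continuity step (the paper merely asserts it follows from Lemma~\ref{lemma L2 cont of DaH}), and you correctly note both the typo $\lim_{s\to 0}$ versus $\lim_{s\to\infty}$ and the implicit reliance on the lower bound of Assumption~\ref{compactness_of_orbits_assumption} to ensure $\inf_\alpha J(\alpha)>-\infty$.
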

\begin{theorem}
\label{thm: Convergence_of_value_fiunction}
Let $(\alpha_{s},X_{s},Y_{s},Z_{s})$ be a solution to \eqref{Gradient_flow_system_intro} on $[0,\infty)$. 
Let Assumptions \ref{Assumptions_Old}, \ref{additional_assumptions} and \ref{ass: one for Gateuax} hold and assume $\alpha^{0}\in L^{2}_{\text{prog}}([0,T]\times\Omega;\mathbb{R}^{p})$. Also assume that for any $(t,y,z)\in[0,T] \times \mathbb{R}^{d} \times \mathbb{R}^{d\times d'}$ the maps $
(x,a)\mapsto\mathcal{H}(t,x,y,z,a)$
and $x\mapsto g(x)$ are convex. 
Fix an arbitrary $\alpha^*\in \argmin_{\alpha\in L^{2}_{\text{prog}}([0,T]\times\Omega;\mathbb{R}^{p})}J(\alpha)$.
Then for any $S>0$
\begin{align*}
J(\alpha_S) - J(\alpha^*) \leq \frac1{S}\|\alpha_{0}-\alpha^{*}\|^2\,.
\end{align*}
If, additionally, there is $\eta>0$ such that for any $(t,y,z)\in[0,T]\times\mathbb{R}^d\times\mathbb{R}^{d\times d'}$ the map $(x,a)\mapsto\mathcal{H}(t,x,y,z,a) -\frac\eta 2|a|^2$ is convex, then for any $S>0$
\begin{align*}
J(\alpha_S)-J(\alpha^*)\leq\eta e^{-\eta S}\|\alpha^0-\alpha^*\|^2\text{ and }\|\alpha_S-\alpha^*\|^2\leq e^{-\eta S}\|\alpha^0-\alpha^*\|^2,
\end{align*}
where $\|\cdot\|=\|\cdot\|_{L^2_{\text{prog}}\left([0,T]\times\Omega;\mathbb{R}^p\right)}$.
\end{theorem}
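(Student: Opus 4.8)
The plan is to reproduce, in the infinite-dimensional setting, precisely the finite-dimensional computation sketched in the introduction; the two substantive ingredients are the convexity estimate~\eqref{eqn:convexity_notion} for the cost functional (Lemma~\ref{lemma: convexity of cost}) and the energy identity of Lemma~\ref{functional_decreases_along_flow}, which in particular shows that $s\mapsto J(\alpha_s)$ is nonincreasing. Throughout write $\|\cdot\|$ and $(\cdot,\cdot)$ for the norm and inner product of $L^2_{\text{prog}}([0,T]\times\Omega;\mathbb{R}^p)$ and set $h(s):=\|\alpha_s-\alpha^*\|^2$ and $\phi(s):=J(\alpha_s)-J(\alpha^*)\ge 0$. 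Since $\alpha\in AC(0,S;L^2_{\text{prog}}([0,T]\times\Omega;\mathbb{R}^p))$ solves $\tfrac{d}{ds}\alpha_s=-D_a\mathcal H(t,X_s,Y_s,Z_s,\alpha_s)$ in the $L^2_{\text{prog}}$ sense (see~\eqref{Gradient_flow_system_intro}), the chain rule for absolutely continuous Hilbert-space-valued curves gives, for a.e.\ $s$,
\[
\tfrac{d}{ds}h(s)=2\bigl(\alpha_s-\alpha^*,\tfrac{d}{ds}\alpha_s\bigr)=-2\bigl(D_a\mathcal H(t,X_s,Y_s,Z_s,\alpha_s),\,\alpha_s-\alpha^*\bigr).
\]

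For the $1/S$ rate I would apply~\eqref{eqn:convexity_notion} with $\alpha=\alpha^*$ and $\beta=\alpha_s$, which yields $\bigl(D_a\mathcal H(t,X_s,Y_s,Z_s,\alpha_s),\alpha_s-\alpha^*\bigr)\ge J(\alpha_s)-J(\alpha^*)=\phi(s)$, hence $\tfrac{d}{ds}h(s)\le-2\phi(s)$. Integrating over $[0,S]$ gives $2\int_0^S\phi(s)\,ds\le h(0)-h(S)\le h(0)$, and since $\phi$ is nonincreasing by Lemma~\ref{functional_decreases_along_flow} we get $2S\,\phi(S)\le 2\int_0^S\phi(s)\,ds\le h(0)$, i.e.\ $J(\alpha_S)-J(\alpha^*)\le\tfrac1{2S}\|\alpha_0-\alpha^*\|^2\le\tfrac1S\|\alpha_0-\alpha^*\|^2$.

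For the exponential rate, note that when $(x,a)\mapsto\mathcal H(t,x,y,z,a)-\tfrac\eta2|a|^2$ is convex the argument behind~\eqref{eqn:convexity_notion} carries the extra quadratic term through and yields $J(\alpha)-J(\beta)\ge\bigl(D_a\mathcal H(t,X^\beta,Y^\beta,Z^\beta,\beta),\alpha-\beta\bigr)+\tfrac\eta2\|\alpha-\beta\|^2$. Taking $\alpha=\alpha^*$, $\beta=\alpha_s$ and inserting into the displayed identity gives $\tfrac{d}{ds}h(s)\le-2\phi(s)-\eta h(s)\le-\eta h(s)$; Gr\"onwall's inequality then gives $\|\alpha_S-\alpha^*\|^2=h(S)\le e^{-\eta S}h(0)$, the second claimed bound. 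For the first bound I would multiply $\tfrac{d}{ds}h(s)+\eta h(s)\le-2\phi(s)$ by $e^{\eta s}$ to get $\tfrac{d}{ds}\bigl(e^{\eta s}h(s)\bigr)\le-2e^{\eta s}\phi(s)\le-2e^{\eta s}\phi(S)$ (using once more that $\phi$ is nonincreasing), integrate over $[0,S]$ and rearrange; this bounds $\phi(S)$ by a constant multiple of $e^{-\eta S}h(0)$, and an elementary estimate then yields $J(\alpha_S)-J(\alpha^*)\le\eta e^{-\eta S}\|\alpha^0-\alpha^*\|^2$.

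Given Lemmas~\ref{lemma: convexity of cost} and~\ref{functional_decreases_along_flow} the argument above is short, so the real obstacles lie in those two results (both established earlier): proving~\eqref{eqn:convexity_notion} requires differentiating $J$ through the coupled forward--backward system together with the attendant BMO/BSDE estimates, and the energy identity requires justifying the limit exchanges in the heuristic differentiation of $s\mapsto J(\alpha_s)$. Within the present proof the only points needing care are the chain rule used above (legitimate because $\alpha$ is absolutely continuous with values in a Hilbert space and $\alpha_s$ stays bounded by Theorem~\ref{thm: Extending gradient flow}), the derivation of the strongly-convex refinement of Lemma~\ref{lemma: convexity of cost}, and the observation that $\alpha^*\in L^2_{\text{prog}}$ so that $\alpha^*-\alpha_s$ is an admissible perturbation in~\eqref{eqn:convexity_notion}.
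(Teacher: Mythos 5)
Your proposal is correct and follows essentially the same route as the paper: differentiate $\|\alpha_s-\alpha^*\|^2$ along the flow, apply the convexity inequality of Lemma~\ref{lemma: convexity of cost} (including its strongly convex refinement, which is already the second statement of that lemma), integrate, and use the monotonicity of $s\mapsto J(\alpha_s)$ from Lemma~\ref{functional_decreases_along_flow}. The only difference is cosmetic: for the exponential rate the paper integrates $e^{\eta s}(J(\alpha_s)-J(\alpha^*))\le -\tfrac{d}{ds}(e^{\eta s}\|\alpha_s-\alpha^*\|^2)$ directly rather than splitting the two bounds as you do, and both arguments end with the same (slightly loose) passage from $\tfrac{\eta}{e^{\eta S}-1}$ to $\eta e^{-\eta S}$.
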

Lemma~\ref{functional_decreases_along_flow} and Theorems~\ref{thm: Extending gradient flow}, \ref{prop: L^2-convergence of D_aH}, \ref{thm: Convergence_of_value_fiunction} will be proved in Section~\ref{Sect: convergence to OC}.

\section{Proof for Existence of Solutions to the Gradient Flow System}
\label{sect:existence of solut}

Existence and uniqueness will be shown by utilising Banach's Fixed Point Theorem. 
We first show the map $\varphi\mapsto D_a \mathcal H(X^\varphi, Y^\varphi, Z^\varphi, \varphi)$ is Lipschitz continuous, see Lemma \ref{lemma L2 cont of DaH}. 
The proof of Lemma \ref{lemma L2 cont of DaH} relies on the theory of BMO martingales
which allow us to derive a 
uniform in $\alpha$ bound for the processes $Y^\alpha$ and $Z^{\alpha}$ with respect to appropriate norms, see Lemma~\ref{technical_result_for_BSDEs}.

We now introduce the following notation.
For $1\leq j\leq\infty$, define
\begin{align}\label{notation_space_1}
\mathbb{H}_{k}^{j}:= L^{j}_{\text{prog}}\left([0,T]\times\Omega;\mathbb{R}^{k}\right).
\end{align}
In particular the space of admissible controls $L^{2}_{\text{prog}}\left([0,T]\times\Omega;\mathbb{R}^{p}\right)$ is now denoted $\mathbb{H}^{2}_{p}$, and for $X\in\mathbb{H}_{k}^{j}$, we define,
\begin{align}\label{notation_norm}
\left\|X\right\|_{\mathbb{H}_{k}^{j}}=\begin{cases}
\left(\E\left[\int_{0}^{T}\left|X_{t}\right|^{j}dt\right]\right)^{\frac{1}{j}}&\text{ if }1\leq j<\infty\\
\esssup_{(t,\omega)\in[0,T]\times\Omega}\left|X_{t}\right|&\text{otherwise}.
\end{cases}.
\end{align}

Let $M$ be an $\mathbb R$-valued stochastic process. 
We shall write $\langle M\rangle$ to denote its quadratic variation.
We will say $M\in\mathcal{H}_{\mathbb{F}}^{2}([0,T];\mathbb{R})$, if: $M$ is progressively measurable with respect to the filtration $\left(\mathcal{F}_t\right)_{t\geq 0}$,
$M$ has continuous samples paths, $M_{0}=0$ and $\|M\|_{\mathcal{H}^{2}_{\mathbb{F}}}:=\|\sqrt{\langle M\rangle_{T}}\|_{L^{2}}=\left(\E\left[\langle M\rangle_T\right]\right)^{\frac{1}{2}}<\infty$.
\begin{definition}
\label{def bmo}
With the supremum is taken over all stopping times $\tau\leq T$ let
\begin{align*}
\text{BMO}=\left\{M\in\mathcal{H}_{\mathcal{F}}^{2}([0,T];\mathbb{R}): \|M\|_{\text{BMO}}:=\sup_{\tau}\left\|\left(\E\left[\langle M\rangle_{T}-\langle M\rangle_{\tau}|\mathcal{F}_{\tau}\right]\right)^{\frac{1}{2}}\right\|_{\infty}<\infty\right\}\,.
\end{align*}
\end{definition}
For each admissible control $\alpha$ the solution to the adjoint equation $\left(Y^{\alpha},Z^{\alpha}\right)\in\mathbb{H}^{2}_{d}\times\mathbb{H}^{2}_{d\times d'}$. 
For $Z\in\mathbb{H}^{2}_{d\times d'}$ we say $Z\in\mathcal{K}(\mathbb{R}^{d\times d'})$ if   
\begin{align*}
\|Z\|_{\mathcal{K}}:=\sup_{\tau}\bigg\|\E\bigg[\int_{\tau}^{T}|Z_{t}|^{2}dt\bigg|\mathcal{F}_{\tau}\bigg]^{\frac{1}{2}}\bigg\|_{\infty}<\infty,
\end{align*} 
despite all finite dimensional norms being equivalent, it will be useful to set $
|Z|=(Z:Z)^{\frac{1}{2}}$.
The following lemma will be proved in Section~\ref{proof: technical_result_for_BSDEs}.
\begin{lemma}\label{technical_result_for_BSDEs}
Let $\left(Z^{\alpha}\right)^{ij}$ be the $(i,j)$-th component of the matrix $Z^{\alpha}$ and let $\big(\left(Z^{\alpha}\right)^{ij}\circ W^{j}\big)_t := \int_0^t \left(Z^{\alpha}\right)^{ij}_{t'}dW^{j}_{t'}$ for all $t\in [0,T]$.
Let Assumptions \ref{Assumptions_Old} and \ref{ass: D-xf and D_x g} hold and let $\alpha$ be an admissible control. Assume the pair $\left(Y^{\alpha},Z^{\alpha}\right)$ solves the adjoint equation (\ref{adjoint equation}). 
Then there is a constant $C_{\mathbb H^\infty,\,\text{BMO}}>0$ such that 
\begin{align*}
\sup_{\alpha}\|Y^{\alpha}\|_{\mathbb{H}^{\infty}_d} + \sup_{\alpha}\|Z^{\alpha}\|_{\mathcal{K}}+\sup_{1 \leq i\leq d } \sup_{1\leq j\leq d'}\sup_{\alpha}\|\left(Z^{\alpha}\right)^{ij}\circ W^{j}\|_{\text{BMO}}\leq  C_{\mathbb H^\infty,\,\text{BMO}}\,.
\end{align*}
\end{lemma}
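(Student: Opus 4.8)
The plan is to obtain the three bounds in sequence, first controlling $\|Y^\alpha\|_{\mathbb H^\infty_d}$, then using this to bound $\|Z^\alpha\|_{\mathcal K}$ and the BMO norms of the stochastic integrals. The crucial structural observation is that under Assumptions~\ref{Assumptions_Old} and~\ref{ass: D-xf and D_x g} the driver of the adjoint BSDE~\eqref{adjoint equation} is \emph{linear} in $(y,z)$: indeed $D_x\mathcal H(t,x,y,z,a) = D_xb(t,x,a)^\top y + (D_x\sigma(t,x,a))\!:\!z$-type term $+ D_xf(t,x,a)$, so writing $\beta_t := D_xb(t,X^\alpha_t,\alpha_t)$, $\gamma_t := D_x\sigma(t,X^\alpha_t,\alpha_t)$ and $\xi_t := D_xf(t,X^\alpha_t,\alpha_t)$ we have a BSDE of the form $-dY^\alpha_t = (\beta_t Y^\alpha_t + \gamma_t Z^\alpha_t + \xi_t)\,dt - Z^\alpha_t\,dW_t$ with terminal condition $D_xg(X^\alpha_T)$, where by Assumptions~\ref{Assumptions_Old}(2) and~\ref{ass: D-xf and D_x g} the coefficients $\beta,\gamma,\xi$ and the terminal value are all \emph{uniformly bounded} by $K$ (in particular $|\xi_t|\le K$, $|D_xg|\le K$), with bounds independent of $\alpha$ and of $(t,\omega)$.

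First I would establish the $\mathbb H^\infty_d$ bound on $Y^\alpha$. The standard device here is the linear change-of-measure / variation-of-constants representation: since $\gamma$ is bounded, the exponential $\mathcal E(\int \gamma\,dW)$ defines a measure $\mathbb Q^\alpha$ under which (by Girsanov) the $\gamma Z^\alpha$ term is absorbed, and one gets $Y^\alpha_t = \mathbb E^{\mathbb Q^\alpha}\big[ e^{\int_t^T \beta_r\,dr} D_xg(X^\alpha_T) + \int_t^T e^{\int_t^r \beta_u\,du}\xi_r\,dr \,\big|\,\mathcal F_t\big]$ (with the obvious matrix-ordering caveats handled componentwise or via the resolvent of $\beta$). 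Using $\|\beta\|_\infty,\|\xi\|_\infty,\|D_xg\|_\infty \le K$ and $T<\infty$ this yields the pointwise-in-$(t,\omega)$ bound $|Y^\alpha_t| \le e^{KT}(K + KT) =: C_{\mathbb H^\infty}$, uniformly in $\alpha$. (Alternatively one can avoid Girsanov and argue directly: apply Itô to $|Y^\alpha_t|^2$, take conditional expectation, use Young's inequality to dominate the cross terms, and invoke a backward Gronwall / comparison argument — but the change-of-measure route is cleanest and is exactly the BMO-flavoured technique the paper advertises.)

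Given the uniform bound $\|Y^\alpha\|_{\mathbb H^\infty_d}\le C_{\mathbb H^\infty}$, the $\mathcal K$-bound on $Z^\alpha$ and the BMO-bounds on $(Z^\alpha)^{ij}\circ W^j$ follow by the classical "energy estimate localised at a stopping time" argument. Fix a stopping time $\tau\le T$, apply Itô's formula to $|Y^\alpha_t|^2$ on $[\tau,T]$, rearrange to isolate $\mathbb E\big[\int_\tau^T |Z^\alpha_t|^2\,dt\,\big|\,\mathcal F_\tau\big]$, and bound the remaining terms: $|Y^\alpha_\tau|^2 \le C_{\mathbb H^\infty}^2$, the terminal term $|D_xg(X^\alpha_T)|^2\le K^2$, and the driver cross-term $2\,\mathbb E\big[\int_\tau^T |Y^\alpha_t|\,|\beta_t Y^\alpha_t + \gamma_t Z^\alpha_t + \xi_t|\,dt\,\big|\,\mathcal F_\tau\big]$ is handled by Young's inequality, putting a small multiple of $\mathbb E\big[\int_\tau^T|Z^\alpha_t|^2\,dt\,\big|\,\mathcal F_\tau\big]$ on the right (absorbed into the left) and leaving only terms controlled by $C_{\mathbb H^\infty}^2$, $K^2$ and $T$. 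Taking ess-sup over $\tau$ gives $\|Z^\alpha\|_{\mathcal K}\le C$ uniformly in $\alpha$; since $|(Z^\alpha)^{ij}\circ W^j|$ has quadratic variation $\int_0^\cdot |(Z^\alpha)^{ij}_t|^2\,dt \le \int_0^\cdot |Z^\alpha_t|^2\,dt$, the BMO-norm of each component stochastic integral is dominated by $\|Z^\alpha\|_{\mathcal K}$, and we set $C_{\mathbb H^\infty,\,\mathrm{BMO}}$ to be the maximum of the constants obtained.

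The main obstacle is the first step — obtaining the \emph{uniform in $\alpha$} sup-norm bound on $Y^\alpha$ — because the $\gamma_t Z^\alpha_t$ term in the driver is not a priori controllable without already knowing something about $Z^\alpha$; this is precisely where boundedness of $\gamma = D_x\sigma$ (Assumption~\ref{Assumptions_Old}(2)) is essential, as it makes $\mathcal E(\int\gamma\,dW)$ a genuine BMO martingale with all exponential moments, so Girsanov applies and the representation formula is valid, with the resulting constant depending only on $K$, $T$ and the dimensions and not on $\alpha$. A secondary technical nuisance, not a real obstacle, is bookkeeping the matrix/tensor structure of $D_x\mathcal H$ (the $\sigma:z$ term differentiated in $x$ produces a contraction of a $3$-tensor with $z$), which one dispatches by working componentwise and noting all the relevant operator norms are still $\le K$.
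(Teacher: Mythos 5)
Your proposal is correct and follows essentially the same route as the paper: the paper obtains the $\mathbb H^\infty_d$ bound on $Y^\alpha$ by recognising the adjoint equation as a linear BSDE with bounded coefficients (it cites \cite{kerimkulov2021modified} and \cite{harter2019stability} rather than reproducing the Girsanov/variation-of-constants argument you sketch), then derives the $\mathcal K$ bound exactly as you do, via It\^o on $|Y^\alpha_t|^2$ localised at a stopping time with Young's inequality absorbing the small multiple of $\E\left[\int_\tau^T|Z^\alpha_s|^2\,ds\,\middle|\,\mathcal F_\tau\right]$, and finally notes (Remark~\ref{Proving BMO}) that the componentwise BMO norms are dominated by $\|Z^\alpha\|_{\mathcal K}$. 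The only detail worth adding is the paper's explicit verification that the stochastic integral $\int(Y^\alpha)^\top Z^\alpha\,dW$ is a true martingale (using the boundedness of $Y^\alpha$ and square-integrability of $Z^\alpha$) so that it vanishes under conditional expectation.
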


To proceed we will need the following version of Fefferman's inequality which is proved in~\cite[Ch. 2.4]{kazamaki2006continuous}. 
\begin{theorem}\textbf{Fefferman's Inequality}\label{fefferman_inq}
Let $N$ be  a real valued $\mathbb{F}$-martingale such that $\E\left[\sqrt{\langle N\rangle}_{T}\right]<\infty$, and $M\in \text{BMO}$. Then 
\begin{align*}
\E\bigg[\int_{0}^{T}|d\langle M,N\rangle_{s}|\bigg]\leq\sqrt{2}\|M\|_{\text{BMO}}\E\left[|\langle N\rangle_{T}|^{\frac{1}{2}}\right]\,.
\end{align*}
\end{theorem}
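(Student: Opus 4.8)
The plan is to derive this version of Fefferman's inequality from the \emph{weighted} Kunita--Watanabe inequality, choosing the weight so that the $\langle N\rangle$-side telescopes to $\langle N\rangle_T^{1/2}$, and then converting the defining bound of the $\text{BMO}$ norm into an estimate on a Stieltjes integral against $d\langle N\rangle^{1/2}$. Since the left-hand side is the total variation $\int_0^T|d\langle M,N\rangle_s|$, no preliminary reduction to the case where $\langle M,N\rangle$ is monotone is needed; and since all martingales here live on a Brownian filtration, $\langle M\rangle$, $\langle N\rangle$, $\langle M,N\rangle$ are continuous, the first two increasing. The first ingredient is the weighted Kunita--Watanabe inequality: for any strictly positive predictable process $h$, $\Prob$-a.s.,
\[
\int_0^T |d\langle M,N\rangle_s| \leq \left(\int_0^T h_s\, d\langle M\rangle_s\right)^{1/2}\left(\int_0^T h_s^{-1}\, d\langle N\rangle_s\right)^{1/2},
\]
which follows from the measure-form bound $|d\langle M,N\rangle_s|\leq (d\langle M\rangle_s)^{1/2}(d\langle N\rangle_s)^{1/2}$ after inserting $h_s^{1/2}h_s^{-1/2}$ and applying Cauchy--Schwarz to the Stieltjes integral.

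Next I would set $B_s := \langle N\rangle_s^{1/2}$, which is continuous, increasing, adapted, with $B_0 = 0$, and take $h_s := 2B_s$, after first restricting attention to $s > \rho := \inf\{s : \langle N\rangle_s > 0\}$: on $[0,\rho]$ both $\langle N\rangle$ and, by the measure-form bound, $\langle M,N\rangle$ are constant, so the integrals over $[0,\rho]$ vanish and the degeneracy $B_s=0$ is harmless. With this choice the chain rule for the absolutely continuous increasing map $s\mapsto \langle N\rangle_s^{1/2}$ gives $\int_0^T h_s^{-1}\, d\langle N\rangle_s = \langle N\rangle_T^{1/2}$. Taking expectations above and applying Cauchy--Schwarz in $L^2(\Omega)$ reduces the claim to
\[
\E\left[\int_0^T \langle N\rangle_s^{1/2}\, d\langle M\rangle_s\right] \leq \|M\|_{\text{BMO}}^2\, \E\left[\langle N\rangle_T^{1/2}\right],
\]
since substituting this yields exactly $\bigl(2\|M\|_{\text{BMO}}^2\E[\langle N\rangle_T^{1/2}]\bigr)^{1/2}\bigl(\E[\langle N\rangle_T^{1/2}]\bigr)^{1/2} = \sqrt 2\,\|M\|_{\text{BMO}}\,\E[\langle N\rangle_T^{1/2}]$.

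For this last inequality (the crux), write $\langle N\rangle_s^{1/2} = \int_0^s dB_u$ and apply Fubini to the positive product measure on $\{u\leq s\}$ to get $\int_0^T \langle N\rangle_s^{1/2}\, d\langle M\rangle_s = \int_0^T (\langle M\rangle_T - \langle M\rangle_u)\, dB_u$. Since $B$ is continuous increasing adapted, $dB$ is a predictable random measure, and the predictable (equivalently, everything being continuous, optional) projection of $u\mapsto \langle M\rangle_T - \langle M\rangle_u$ is $u\mapsto \E[\langle M\rangle_T - \langle M\rangle_u \mid \mathcal F_u]$, which is bounded by $\|M\|_{\text{BMO}}^2$ by the very definition of the $\text{BMO}$ norm (Definition~\ref{def bmo}). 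Hence
\[
\E\left[\int_0^T (\langle M\rangle_T - \langle M\rangle_u)\, dB_u\right] = \E\left[\int_0^T \E[\langle M\rangle_T - \langle M\rangle_u \mid \mathcal F_u]\, dB_u\right] \leq \|M\|_{\text{BMO}}^2\, \E[B_T],
\]
and $\E[B_T] = \E[\langle N\rangle_T^{1/2}] < \infty$ by hypothesis, which finishes the argument. If one prefers to avoid the projection theorem, the identity $\E[\int_0^T \phi_u\, dB_u] = \E[\int_0^T \E[\phi_u\mid\mathcal F_u]\, dB_u]$ can be recovered by Riemann--Stieltjes approximation, conditioning each increment $B_{t_{i+1}} - B_{t_i}\in\mathcal F_{t_{i+1}}$ and observing that the cross term $\sum_i (\langle M\rangle_{t_{i+1}} - \langle M\rangle_{t_i})(B_{t_{i+1}} - B_{t_i})$ tends to $[\langle M\rangle, B]_T = 0$ since both processes are continuous of finite variation.

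The hard part is the conversion in the last paragraph: turning the pointwise conditional estimate defining $\|M\|_{\text{BMO}}$ into a genuine bound on $\E[\int_0^T(\langle M\rangle_T - \langle M\rangle_u)\,dB_u]$, which is where the stochastic-calculus care (predictable projection, or the Riemann--Stieltjes limit with the vanishing cross term) is needed, together with the harmless reduction away from $\{\langle N\rangle_\cdot = 0\}$. The weighted Kunita--Watanabe inequality, the telescoping computation for $\langle N\rangle^{1/2}$, and the two applications of Cauchy--Schwarz are all routine, and the constant $\sqrt 2$ drops out of the computation without further optimisation.
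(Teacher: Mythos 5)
The paper does not prove Theorem~\ref{fefferman_inq} at all — it is quoted verbatim from the literature with a pointer to \cite[Ch.~2.4]{kazamaki2006continuous} — so there is nothing to compare against except the cited source. Your argument is correct and is in substance the classical proof from that source: weighted Kunita--Watanabe with weight $h_s=2\langle N\rangle_s^{1/2}$ so that the $\langle N\rangle$-factor telescopes to $\langle N\rangle_T^{1/2}$, Cauchy--Schwarz in $L^2(\Omega)$, Fubini to rewrite $\int_0^T\langle N\rangle_s^{1/2}\,d\langle M\rangle_s$ as $\int_0^T(\langle M\rangle_T-\langle M\rangle_u)\,dB_u$, and the optional/predictable projection to insert $\E[\langle M\rangle_T-\langle M\rangle_u\mid\mathcal F_u]\leq\|M\|_{\text{BMO}}^2$; the constant $\sqrt 2$ comes out exactly as you compute. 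Two small remarks. First, the telescoping identity $\int_0^T(2\langle N\rangle_s^{1/2})^{-1}d\langle N\rangle_s=\langle N\rangle_T^{1/2}$ does not need absolute continuity of $s\mapsto\langle N\rangle_s$ (nor the Brownian filtration): it is the change-of-variables formula for Stieltjes integrals against a continuous increasing function, with your reduction away from $\{\langle N\rangle_\cdot=0\}$ (or a $\sqrt{\cdot+\varepsilon}$ regularisation) handling the singularity at $0$. Second, in the fallback Riemann--Stieltjes version of the projection step, passing the limit of the discrete sums inside the expectation is not covered by dominated convergence out of the box, since $\E[\langle M\rangle_T\,\langle N\rangle_T^{1/2}]$ is not finite under the stated hypotheses; one needs a truncation/monotone-class argument there, which is precisely what the projection theorem packages up — so the primary route you give is the one to keep.
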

\begin{lemma}\label{lem:fefferman ext}
Let $X$ be a continuous adapted process such that $\E\left[\sup_{0\leq t < T}|X_{t}|^{2}\right]<\infty$ and 
let $M$ be a BMO martingale. 
Then 
\begin{align*}
\E\left[\left\langle X\circ M \right\rangle_{T}\right]^{\frac{1}{2}}\leq\sqrt{2}\E\left[\sup_{0\leq t<T}|X_t|^{2}\right]^{\frac{1}{2}}\|M\|_{\text{BMO}}.
\end{align*}
\end{lemma}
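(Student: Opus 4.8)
The plan is to reduce this to the version of Fefferman's inequality stated in Theorem \ref{fefferman_inq} by an approximation argument, since $X$ is only assumed to be continuous adapted with $\E[\sup_{t<T}|X_t|^2]<\infty$ rather than itself a martingale. First I would observe that $\langle X\circ M\rangle_T = \int_0^T |X_t|^2\,d\langle M\rangle_t$, since $X\circ M$ denotes the stochastic integral $\int_0^\cdot X_t\,dM_t$ and quadratic variation of a stochastic integral picks up the square of the integrand against $d\langle M\rangle$. So the claimed bound is
\[
\E\Big[\int_0^T |X_t|^2\,d\langle M\rangle_t\Big]^{1/2}\leq \sqrt 2\,\E\Big[\sup_{0\leq t<T}|X_t|^2\Big]^{1/2}\|M\|_{\text{BMO}}\,.
\]
The natural route is to write $|X_t|^2$ (componentwise, $X$ may be vector- or matrix-valued, so sum over components and use $|X_t|^2 = \sum_i |X^i_t|^2$) and handle each scalar component, so without loss of generality take $X$ scalar.

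The key step is to apply Theorem \ref{fefferman_inq} with a clever choice of $N$. Take $N = X\circ X$ in some sense — more precisely, I would like $\langle N\rangle$ to dominate $\int_0^\cdot |X_t|^2\,dt$ while keeping $\E[\langle N\rangle_T^{1/2}]$ controlled by $\E[\sup_t |X_t|^2]^{1/2}$. Actually the cleanest choice: one cannot directly integrate $X$ against $dX$ without a semimartingale structure, so instead I would approximate. Fix a partition and set $N^\pi_t = \sum_{t_k\leq t} X_{t_k}(M_{t_{k+1}\wedge t} - M_{t_k\wedge t})$ — no; the right object is to note that by the occupation-time / Kunita–Watanabe reasoning one has $\int_0^T |X_t|^2 d\langle M\rangle_t = \langle X\circ M\rangle_T$ and then bound $\langle X\circ M, X\circ M\rangle$ using that $X\circ M$ is itself a (local) martingale whose bracket against itself, via Theorem \ref{fefferman_inq} applied with the pair $(M, N)$ where $N := X\circ M$, gives $\E[\int_0^T |d\langle M, X\circ M\rangle_t|] \leq \sqrt 2 \|M\|_{\text{BMO}}\E[\langle X\circ M\rangle_T^{1/2}]$, while $\langle M, X\circ M\rangle_t = \int_0^t X_s\,d\langle M\rangle_s$. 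This alone is not quite the statement; instead I would follow the standard trick: write $\langle X\circ M\rangle_T = \int_0^T X_t^2 d\langle M\rangle_t$ and integrate by parts / use that, with $A_t := \langle M\rangle_T - \langle M\rangle_t$,
\[
\int_0^T X_t^2\,d\langle M\rangle_t = -\int_0^T X_t^2\,dA_t \leq \sup_t X_t^2 \cdot A_0\cdot(\ldots)
\]
— no, $A$ is decreasing so this needs care. The honest approach is Lemma 2.4-type in Kazamaki: approximate $X$ by simple processes $X^n$ (piecewise constant on a mesh, $X^n\to X$ uniformly in probability with $\E[\sup|X^n|^2]$ bounded), for which $X^n\circ M$ is a finite sum of BMO martingales hence in BMO, write $\langle X^n\circ M\rangle_T = \sum_k (X^n_{t_k})^2(\langle M\rangle_{t_{k+1}} - \langle M\rangle_{t_k})$, and for each term use $\E[(\langle M\rangle_{t_{k+1}}-\langle M\rangle_{t_k})\,|\,\mathcal F_{t_k}] \leq \|M\|_{\text{BMO}}^2$ together with a telescoping/summation by parts that produces the factor $\sup_t |X_t|^2$ with constant $\sqrt 2$; then pass to the limit using Fatou and the uniform integrability coming from $\E[\sup_t|X_t|^2]<\infty$.

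The main obstacle I anticipate is the limiting argument: ensuring that $\langle X^n\circ M\rangle_T \to \langle X\circ M\rangle_T$ in a strong enough sense (e.g.\ in $L^1$, or at least lower-semicontinuously so Fatou applies on the correct side), and that the approximating processes genuinely satisfy $\E[\sup_t |X^n_t|^2] \leq \E[\sup_t|X_t|^2]$ (true if $X^n$ is a sampling of $X$, but one must check left- vs.\ right-continuity of the sampling so that $X^n$ remains adapted). A secondary technical point is justifying $\langle X\circ M\rangle_T = \int_0^T |X_t|^2\,d\langle M\rangle_t$ when $X$ is not a semimartingale — this is just the definition of the stochastic integral against the martingale $M$ and its quadratic variation, valid since $\E[\int_0^T |X_t|^2\,d\langle M\rangle_t] \leq \E[\sup_t|X_t|^2]\|M\|_{\text{BMO}}^2 < \infty$ makes $X\circ M$ a genuine $L^2$-martingale, so the identity is standard. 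Once these two points are dispatched, the inequality follows by combining the conditional BMO bound on the increments of $\langle M\rangle$ with the pointwise bound $|X_t|^2\leq \sup_{s<T}|X_s|^2$ and Theorem \ref{fefferman_inq}; the constant $\sqrt 2$ is inherited directly from Fefferman's inequality.
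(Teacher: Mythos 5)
Your proposal circles the right tools but stops short of the one choice that makes the argument close. The paper's proof is a direct application of Theorem \ref{fefferman_inq} with $N:=\int_0^\cdot X_t^2\,dM_t$ (not $N=X\circ M$, which you tried and correctly discarded): then $\langle N,M\rangle_T=\int_0^T X_t^2\,d\langle M\rangle_t=\langle X\circ M\rangle_T$, so Fefferman's inequality gives
\begin{align*}
\E\left[\langle X\circ M\rangle_T\right]\leq\sqrt{2}\,\|M\|_{\text{BMO}}\,\E\left[\Big(\int_0^T X_t^4\,d\langle M\rangle_t\Big)^{1/2}\right],
\end{align*}
and pulling $\sup_{0\leq t<T}|X_t|$ out of the integrand followed by H\"older's inequality yields $\E[\langle X\circ M\rangle_T]\leq\sqrt{2}\,\|M\|_{\text{BMO}}\,\E[\sup_t|X_t|^2]^{1/2}\,\E[\langle X\circ M\rangle_T]^{1/2}$; dividing through by $\E[\langle X\circ M\rangle_T]^{1/2}$ finishes. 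No approximation by simple processes is needed.

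As for the route you settle on, the decisive step is exactly the one you leave as a black box. With $X^n$ piecewise constant, the naive estimate $\E[\sum_k (X^n_{t_k})^2\,\E[\langle M\rangle_{t_{k+1}}-\langle M\rangle_{t_k}\mid\mathcal{F}_{t_k}]]\leq\|M\|_{\text{BMO}}^2\sum_k\E[(X^n_{t_k})^2]$ diverges as the mesh is refined, so the ``telescoping/summation by parts that produces the factor $\sup_t|X_t|^2$'' is not a technicality but the entire content of the lemma. It can be supplied: replace $X_{t_k}^2$ by the running maximum $S_k=\max_{j\leq k}X_{t_j}^2$ (increasing, adapted), Abel-sum to write $\sum_k S_k\,(\langle M\rangle_{t_{k+1}}-\langle M\rangle_{t_k})$ as $\sum_k(\langle M\rangle_T-\langle M\rangle_{t_k})(S_k-S_{k-1})$ up to boundary terms, and condition each summand on $\mathcal{F}_{t_k}$ to obtain the bound $\|M\|_{\text{BMO}}^2\,\E[\sup_t|X_t|^2]$ --- in fact with constant $1$ rather than $\sqrt{2}$, which shows that your claim that the $\sqrt{2}$ is ``inherited directly from Fefferman's inequality'' is inconsistent with this route, in which Fefferman is never actually invoked. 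As written, the proposal identifies the obstacles but does not overcome the central one, so it is incomplete.
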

\begin{proof}
The proof of this results follows Lemma 1.4 in \cite{delbaen2010harmonic}. 
If $\E\left[\langle X\circ M\rangle_{T}\right]^{\frac{1}{2}}=0$ then the result is trivially true and we proceed assuming this term is not zero.
From Fefferman's inequality we get
\[
\begin{split}
\E\left[\left\langle X\circ M\right\rangle_{T}\right]
& =\E\left[\int_0^TX_t^{2}d\langle M\rangle_{t}\right]=\E\left[\left\langle\int_0^\cdot X_t^2dM,M\right\rangle_{T}\right]\\
& \leq \sqrt{2}\E\left[\sqrt{\left\langle\int_0^\cdot X_t^2dM\right\rangle_{T}}\right]\|M\|_{\text{BMO}}\,.
\end{split}
\]
Hence 
\[
\begin{split}
\E\left[\left\langle X\circ M\right\rangle_{T}\right]
& \leq \sqrt{2}\E\left[\sqrt{\int_0^{T}X_t^{4}d\langle M\rangle_t}\right]\|M\|_{\text{BMO}}\\
& \leq\sqrt{2}\E\left[\sqrt{\sup_{0\leq t<T}|X_t|^2}\sqrt{\int_0^TX_t^2d\langle M\rangle_t}\right]\|M\|_{\text{BMO}}\\
& \leq \sqrt{2}\left(\E\left[\sup_{0\leq t<T}|X_t|^{2}\right]\right)^{\frac{1}{2}}\left(\E\left[\int_0^T X_t^2d\langle M\rangle_t\right]\right)^{\frac{1}{2}}\|M\|_{\text{BMO}}\,,
\end{split}
\]
where the last step employed H\"older's inequality.
Thus, finally, we get 
\[
\E\left[\left\langle X\circ M\right\rangle_{T}\right] 
\leq \sqrt{2}\left(\E\left[\sup_{0\leq t<T}|X_t|^{2}\right]\right)^{\frac{1}{2}}\E\left[\langle X\circ M\rangle_{T}\right]^{\frac{1}{2}}\|M\|_{\text{BMO}}\,.
\]
The required result follows from dividing by $\E\left[\langle X\circ M\rangle_{T}\right]^{\frac{1}{2}}$. 
\end{proof}
\begin{lemma}\label{lemma bsde stability}
Under Assumptions \ref{Assumptions_Old} and \ref{additional_assumptions}  it follows that there exists $c>0$ which depends only on $K,T,d$ and $d'$ such that if $\alpha, \alpha' \in L^2_\text{Prog}([0,T]\times\Omega; \mathbb R^p)$ then 
\begin{equation}\label{forward_bound}
\mathbb E \sup_{t\in [0,T]} |X_t^\alpha - X_t^{\alpha'}|^2 \leq c\|\alpha-\alpha'\|_{L^2_\text{Prog}([0,T]\times\Omega; \mathbb{R}^p)}^2.
\end{equation}
If, additionally, Assumptions \ref{Ass_2nd_derivative_sapce} and \ref{ass: D-xf and D_x g} hold then
\begin{equation}\label{backward_bounds}
\mathbb E \sup_{t\in [0,T]} |Y_t^\alpha - Y_t^{\alpha'}|^2 + \|Z^\alpha - Z^{\alpha'}\|_{L^2_\text{Prog}([0,T]\times\Omega; \mathbb{R}^p)}^2 \leq c\|\alpha-\alpha'\|_{L^2_\text{Prog}([0,T]\times\Omega; \mathbb{R}^p)}^2.
\end{equation}
\end{lemma}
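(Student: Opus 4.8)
The plan is to prove the two estimates by standard $L^2$ a priori estimates for SDEs and BSDEs, using the Lipschitz-in-$x$ bounds on the coefficients coming from Assumptions \ref{Assumptions_Old} and \ref{additional_assumptions} together with the uniform bounds on $(Y^\alpha, Z^\alpha)$ and the BMO norm of $Z^\alpha \circ W$ from Lemma \ref{technical_result_for_BSDEs}. First I would establish \eqref{forward_bound}. Writing $\delta X := X^\alpha - X^{\alpha'}$ and $\delta \alpha := \alpha - \alpha'$, the process $\delta X$ solves an SDE with zero initial condition; applying Itô's formula to $|\delta X_t|^2$, using $|b(t,x,a)-b(t,x',a)|+|\sigma(t,x,a)-\sigma(t,x',a)|\le K|x-x'|$ (the consequence of Assumption \ref{Assumptions_Old}) for the $x$-increments and the bounds $|D_a b|, |D_a \sigma| \le K$ from Assumption \ref{additional_assumptions} together with the mean value theorem for the $a$-increments, then Young's inequality, Burkholder--Davis--Gundy, and Grönwall gives \eqref{forward_bound}.

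Next I would turn to \eqref{backward_bounds}. Set $\delta Y := Y^\alpha - Y^{\alpha'}$, $\delta Z := Z^\alpha - Z^{\alpha'}$. These solve a linear BSDE with terminal condition $D_x g(X^\alpha_T) - D_x g(X^{\alpha'}_T)$ and a driver which is the difference $D_x\mathcal H(t,X^\alpha_t,Y^\alpha_t,Z^\alpha_t,\alpha_t) - D_x\mathcal H(t,X^{\alpha'}_t,Y^{\alpha'}_t,Z^{\alpha'}_t,\alpha'_t)$. Expanding $D_x\mathcal H = D_x b \cdot y + D_x\sigma : z + D_x f$, the difference splits into terms that are Lipschitz in $\delta X$, $\delta \alpha$ (uniformly in everything else, using Assumptions \ref{Ass_2nd_derivative_sapce}, \ref{additional_assumptions}, \ref{ass: D-xf and D_x g}: $|D^2_x b|, |D^2_x \sigma| \le K$, $|D_x D_a b|, |D_x D_a\sigma|, |D_a D_x f|\le K$, $|D^2_x g|\le K$) and crucially a term of the form $(D_x b(t,X^\alpha_t,\alpha_t) - D_x b(t,X^{\alpha'}_t,\alpha'_t))\cdot \delta Y_t$ plus the analogous $\sigma$-term contracted with $\delta Z_t$. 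The genuinely Lipschitz part of the driver (in $\delta Y$, $\delta Z$) is handled by the usual BSDE estimate; the problematic terms are the ones where the coefficient increment multiplies $Y^\alpha$ or, worse, $Z^\alpha$ — these need the $\mathbb H^\infty$ bound on $Y^\alpha$ and the BMO/$\mathcal K$ control of $Z^\alpha$.

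Concretely, after applying Itô to $|\delta Y_t|^2$ one obtains $\mathbb E|\delta Y_t|^2 + \mathbb E\int_t^T |\delta Z_r|^2\,dr$ bounded by $\mathbb E|D_xg(X^\alpha_T)-D_xg(X^{\alpha'}_T)|^2$ plus cross terms; the standard $\delta Y, \delta Z$-Lipschitz pieces absorb into the left side and a Grönwall term, and the remaining inhomogeneity involves $\mathbb E\int_0^T (|D^2_x b|^2 |Y^\alpha_r|^2 + \text{similar}) |\delta X_r|^2\,dr$ and $\mathbb E\int_0^T |D^2_x\sigma|^2 |Z^\alpha_r|^2 |\delta X_r|^2\,dr$. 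The $Y^\alpha$ term is controlled by $\|Y^\alpha\|_{\mathbb H^\infty_d}^2 \mathbb E\sup_t|\delta X_t|^2 \le C \|\delta\alpha\|^2$ via \eqref{forward_bound} and Lemma \ref{technical_result_for_BSDEs}. The $Z^\alpha$ term is the main obstacle: one cannot pull $|Z^\alpha_r|^2$ out in $L^\infty$. The fix is to write $\int_0^T |Z^\alpha_r|^2 |\delta X_r|^2\,dr$ in terms of the quadratic variation of $\delta X \cdot (Z^\alpha \circ W)$ (or more precisely the martingale $\int_0^\cdot (\delta X_r)^2\, d\langle Z^\alpha\circ W\rangle_r$-type object) and invoke Lemma \ref{lem:fefferman ext} with $X \rightsquigarrow \delta X$ and $M \rightsquigarrow$ the relevant BMO martingale built from $Z^\alpha$, giving a bound by $\sqrt{2}\,\mathbb E[\sup_t |\delta X_t|^2]^{1/2}\,\|Z^\alpha\circ W\|_{\text{BMO}} \cdot (\text{something})$ — or, alternatively, run the energy estimate up to a stopping time and use the $\mathcal K$-norm bound on $Z^\alpha$ together with the conditional-expectation structure. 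Either way one arrives at a bound of the form $C\,\mathbb E\sup_t|\delta X_t|^2 \le C\|\delta\alpha\|^2$, and then Grönwall in $t$ (backwards) closes the estimate, yielding \eqref{backward_bounds}. Taking $\sup_t$ on the $\delta Y$ part via BDG at the end completes the proof.
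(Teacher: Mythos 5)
Your overall strategy is the same as the paper's: the forward bound is the standard Doob/It\^o-isometry/Gr\"onwall estimate, and for the backward bound you apply It\^o to $|\delta Y|^2$, absorb the genuinely Lipschitz-in-$(\delta Y,\delta Z)$ parts of the driver, control the coefficient-increment-times-$Y^{\alpha'}$ terms by $\|Y^{\alpha'}\|_{\mathbb H^\infty_d}$, and use the BMO structure of $Z^{\alpha'}\circ W$ via Fefferman's inequality for the remaining term. That is exactly the paper's proof in Section~\ref{proof:lemma bsde stability} (with an exponential weight $e^{\beta t}$ and the decomposition into $I_1,\dots,I_5$).

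There is, however, one concrete step that fails as you have written it. You reduce the problematic term to $\E\int_0^T |D_x^2\sigma|^2|Z^{\alpha}_r|^2|\delta X_r|^2\,dr$ and propose to bound it by Lemma~\ref{lem:fefferman ext} applied with the continuous process $\delta X$. But the driver term in question is $\big(D_x\sigma(t,X^{\alpha}_t,\alpha_t)-D_x\sigma(t,X^{\alpha'}_t,\alpha'_t)\big):Z^{\alpha'}_t$, and since $\sigma$ depends on the control (Assumption~\ref{additional_assumptions} only gives $|D_xD_a\sigma|\le K$), the increment is bounded by $K(|\delta X_t|+|\delta\alpha_t|)$, not by $K|\delta X_t|$ alone. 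If you first apply Young's inequality to split off $|\delta Y|^2$ and are then left with $\E\int_0^T|Z^{\alpha'}_t|^2\big(|\delta X_t|^2+|\delta\alpha_t|^2\big)\,dt$, the piece $\E\int_0^T|Z^{\alpha'}_t|^2|\delta\alpha_t|^2\,dt$ cannot be handled: $\delta\alpha$ is merely progressively measurable and square-integrable, it is not continuous and $\E\sup_t|\delta\alpha_t|^2$ need not be finite, so Lemma~\ref{lem:fefferman ext} does not apply to it, and $Z^{\alpha'}$ is not in $L^\infty$, so this product is not controlled by $\|\delta\alpha\|^2_{L^2_{\text{prog}}}$. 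The paper avoids this by applying Fefferman's inequality (Theorem~\ref{fefferman_inq}) \emph{before} separating the factors: it keeps $|\delta Y_s|\,|D_x\sigma(s,X^\alpha_s,\alpha_s)-D_x\sigma(s,X^{\alpha'}_s,\alpha'_s)|$ inside the stochastic integral $H$, so that after Fefferman the estimate reads $\|Z^{\alpha'}\circ W\|_{\text{BMO}}\,\E\big[\sup_t|\delta Y_t|\,(\int_0^T|\text{incr}_s|^2ds)^{1/2}\big]$; only the bounded-in-$\E\sup$ factor $|\delta Y|$ is pulled out in supremum, while the increment stays inside a $dt$-integral and is bounded by $C(\|\delta X\|^2+\|\delta\alpha\|^2)\le C\|\delta\alpha\|^2$, and the resulting $\gamma\,\E\sup_t|\delta Y_t|^2$ is absorbed by choosing $\gamma$ small. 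Equivalently, you could keep your order of operations but apply the weighted Young inequality so that $|Z^{\alpha'}|^2$ is paired with $|\delta Y|^2$ (which does have a finite, a priori bounded $\E\sup$-norm by Lemma~\ref{technical_result_for_BSDEs}) rather than with $|\delta\alpha|^2$. Either fix closes the argument; as stated, your decomposition leaves an uncontrollable term.
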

\begin{proof}
See Section \ref{proof:lemma bsde stability}.
\end{proof}
\begin{lemma}
\label{lemma L2 cont of DaH}
Let Assumptions \ref{Assumptions_Old}, \ref{Ass_2nd_derivative_sapce}, \ref{additional_assumptions}, \ref{ass: D-xf and D_x g}, \ref{assumption_for_uncontrolled_vol} and \ref{ass:D_a^2sig} hold.
Then there exists a constant $C>0$ such that for arbitrary admissible controls $\varphi$ and $\theta$ 
\[
\|D_a \mathcal H(X^\varphi, Y^\varphi, Z^\varphi, \varphi) 
- D_a \mathcal H(X^\theta, Y^\theta, Z^\theta, \theta)\|_{L^2_{\text{prog}}([0,T]\times \Omega; \mathbb R^p)}^2 \leq C \|\theta-\varphi\|_{L^{2}_{\text{prog}}([0,T]\times\Omega;\mathbb{R}^p)}^2\,.
\]
\end{lemma}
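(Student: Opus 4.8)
The plan is to write $D_a\mathcal H(X^\varphi,Y^\varphi,Z^\varphi,\varphi) - D_a\mathcal H(X^\theta,Y^\theta,Z^\theta,\theta)$ as a sum of terms, one for each argument that changes, and bound each in $\mathbb H^2_p$. Recalling $D_a\mathcal H(t,x,y,z,a) = D_a b(t,x,a)\cdot y + D_a\sigma(t,x,a):z + D_af(t,x,a)$, and using Assumption~\ref{ass:D_a^2sig} so that $D_a\sigma$ does not depend on $a$, I would split as
\begin{align*}
&D_a\mathcal H(X^\varphi,Y^\varphi,Z^\varphi,\varphi) - D_a\mathcal H(X^\theta,Y^\theta,Z^\theta,\theta) \\
&\quad = \big(D_ab(t,X^\varphi,\varphi) - D_ab(t,X^\theta,\theta)\big)\cdot Y^\varphi + D_ab(t,X^\theta,\theta)\cdot(Y^\varphi - Y^\theta)\\
&\qquad + \big(D_a\sigma(t,X^\varphi) - D_a\sigma(t,X^\theta)\big):Z^\varphi + D_a\sigma(t,X^\theta):(Z^\varphi - Z^\theta)\\
&\qquad + \big(D_af(t,X^\varphi,\varphi) - D_af(t,X^\theta,\theta)\big).
\end{align*}
First I would handle the ``easy'' terms. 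The terms $D_ab(t,X^\theta,\theta)\cdot(Y^\varphi-Y^\theta)$ and $D_a\sigma(t,X^\theta):(Z^\varphi-Z^\theta)$ are controlled since $|D_ab|,|D_a\sigma|\le K$ (Assumption~\ref{additional_assumptions}), so their $\mathbb H^2$ norms are bounded by $K\|Y^\varphi-Y^\theta\|_{\mathbb H^2_d}$ and $K\|Z^\varphi-Z^\theta\|_{\mathbb H^2_{d\times d'}}$ respectively, which by Lemma~\ref{lemma bsde stability} are each bounded by $cK\|\varphi-\theta\|_{\mathbb H^2_p}$. For the term $D_af(t,X^\varphi,\varphi)-D_af(t,X^\theta,\theta)$, I would use the Lipschitz property of $D_af$ in $(x,a)$ (which follows from Assumptions~\ref{additional_assumptions} and~\ref{assumption_for_uncontrolled_vol}, bounding $|D_xD_af|$ and $|D_a^2f|$ by $K$ and applying the mean value theorem), dominating by $cK(\|X^\varphi-X^\theta\|_{\mathbb H^2_d} + \|\varphi-\theta\|_{\mathbb H^2_p})$ and then invoking~\eqref{forward_bound}.

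The delicate terms are $\big(D_ab(t,X^\varphi,\varphi)-D_ab(t,X^\theta,\theta)\big)\cdot Y^\varphi$ and $\big(D_a\sigma(t,X^\varphi)-D_a\sigma(t,X^\theta)\big):Z^\varphi$. Here the differences of the coefficient derivatives are Lipschitz in $(x,a)$ (using $|D_xD_ab|,|D_xD_a\sigma|\le K$ from Assumption~\ref{additional_assumptions} and $|D_a^2b|\le K$ from Assumption~\ref{assumption_for_uncontrolled_vol}), so pointwise
\[
\big|D_ab(t,X^\varphi_t,\varphi_t) - D_ab(t,X^\theta_t,\theta_t)\big| \le cK\big(|X^\varphi_t - X^\theta_t| + |\varphi_t - \theta_t|\big),
\]
and similarly for $D_a\sigma$ (without the $|\varphi_t-\theta_t|$ term). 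The issue is that $Y^\varphi$ and $Z^\varphi$ are unbounded a priori in a pointwise sense. For the $Y^\varphi$ term this is resolved by the uniform bound $\|Y^\varphi\|_{\mathbb H^\infty_d}\le C_{\mathbb H^\infty,\mathrm{BMO}}$ from Lemma~\ref{technical_result_for_BSDEs}: then
\[
\big\|\big(D_ab(t,X^\varphi,\varphi)-D_ab(t,X^\theta,\theta)\big)\cdot Y^\varphi\big\|_{\mathbb H^2_p} \le cK\,C_{\mathbb H^\infty,\mathrm{BMO}}\big(\|X^\varphi-X^\theta\|_{\mathbb H^2_d} + \|\varphi-\theta\|_{\mathbb H^2_p}\big),
\]
and again~\eqref{forward_bound} closes it. The main obstacle is the $Z^\varphi$ term, since $Z^\varphi$ is genuinely only square-integrable, not bounded. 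The plan is to exploit the BMO control on $Z^\varphi$ instead: write, with $\Delta_t := |D_a\sigma(t,X^\varphi_t)-D_a\sigma(t,X^\theta_t)| \le cK|X^\varphi_t-X^\theta_t|$,
\[
\mathbb E\int_0^T \Delta_t^2 |Z^\varphi_t|^2\,dt \le \mathbb E\Big[\sup_{t}\Delta_t^2 \int_0^T |Z^\varphi_t|^2\,dt\Big] \le \big(\mathbb E\sup_t \Delta_t^4\big)^{1/2}\Big(\mathbb E\big(\int_0^T |Z^\varphi_t|^2 dt\big)^2\Big)^{1/2},
\]
and control the second factor using the energy inequality for BMO martingales (the ``$L^2$'' version of the $\mathcal K$-bound, obtained from $\|Z^\varphi\|_{\mathcal K}$ via the John--Nirenberg / Fefferman-type estimates already available through Lemma~\ref{technical_result_for_BSDEs} and Lemma~\ref{lem:fefferman ext}); alternatively one invokes Lemma~\ref{lem:fefferman ext} directly with $X = \Delta$ and $M$ built from $Z^\varphi\circ W$, whose BMO norm is bounded by $C_{\mathbb H^\infty,\mathrm{BMO}}$. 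Either way one ends with a bound of the form $cK\,C_{\mathbb H^\infty,\mathrm{BMO}}\,\big(\mathbb E\sup_t |X^\varphi_t-X^\theta_t|^2\big)^{1/2}$ times a moment of $\int|Z^\varphi|^2$ that is finite by the BMO estimate. Combining~\eqref{forward_bound} then finishes. Summing all five contributions and squaring gives the claimed inequality with $C$ depending on $K$, $T$, $d$, $d'$, $p$ and $C_{\mathbb H^\infty,\mathrm{BMO}}$. I expect the bookkeeping in the $Z$-term — picking the right pairing of Hölder exponents so that only the already-established $\mathcal K$/BMO bounds on $Z^\varphi$ and the $L^2$-stability of $X$ are used — to be the only genuinely nontrivial point; everything else is the mean value theorem plus Lemma~\ref{lemma bsde stability}.
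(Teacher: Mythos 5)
Your proposal is correct and follows essentially the same route as the paper: the term-by-term decomposition, the $\mathbb H^\infty$ bound on $Y$ from Lemma~\ref{technical_result_for_BSDEs}, and --- for the critical $Z$-term --- precisely the paper's device of applying Lemma~\ref{lem:fefferman ext} with $X=\Delta$ and $M$ the BMO martingale built from $Z\circ W^j$. The only caveat is that your first alternative for the $Z$-term (Cauchy--Schwarz against $\big(\E\sup_t\Delta_t^4\big)^{1/2}$) would require an $L^4$ stability estimate for $X^\varphi-X^\theta$ controlled by $\|\varphi-\theta\|_{L^2}$, which is not available for merely square-integrable controls, so the Fefferman route you give second is the one that actually closes.
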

\begin{proof}
First note that due to Assumptions~\ref{Assumptions_Old},~\ref{additional_assumptions} the fact $|D^2_a\sigma|=0$ and Lemma~\ref{lemma bsde stability} we have
\begin{align*}
\E\left[\sup_{0\leq t<T}|D_{a}\sigma(t,X_{t}^{\varphi},\varphi_t)-D_{a}\sigma(t,X_{t}^{\theta},\theta_{t})|^2\right]\leq&K\E\left[\sup_{0\leq t<T}|X_{t}^{\theta}-X_{t}^{\varphi}|^2\right]\\ 
\leq& cK\|\theta-\varphi\|_{L^{2}_{\text{prog}}([0,T]\times\Omega;\mathbb{R}^p)}\,.
\end{align*}
For any $j=1,\ldots,d'$ let $M = |Z^{\theta}| \circ W^{j}$.
Let $\mathcal X_t := D_{a}\sigma(t,X_{t}^{\varphi},\varphi)-D_{a}\sigma(t,X_{t}^{\theta},\theta_{t})$.
Due to Lemma~\ref{technical_result_for_BSDEs} we know that $M$ is a BMO martingale
 and hence due to Lemma~\ref{lem:fefferman ext} we have
\begin{equation}
\begin{split}
\label{eq continuity of DaH step}
\E\int_0^T|\mathcal X_t|^2 |Z_t^{\theta}|^2\,dt
& = \E\left\langle |\mathcal X_t|\circ M\right\rangle_T
\leq  2\|M\|_{\text{BMO}}^2\E\left[\sup_{0\leq t\leq T}\left|\mathcal X_t\right|^{2}\right]\\
& \leq 2K C_{\mathbb H^\infty,\,\text{BMO}}^2\E\left[\sup_{0\leq t\leq T}|X_t^{\theta}-X^{\varphi}_t|^2\right]\,.
\end{split}
\end{equation}
Moreover
\begin{align*}
\E&\left[\int_0^{T}\left|D_a\mathcal{H}(t,X_{t}^{\varphi},Y_{t}^{\varphi},Z_{t}^{\varphi},\varphi_t)-D_a\mathcal{H}(t,X_{t}^{\theta},Y_{t}^{\theta},Z_{t}^{\theta},\theta_t)\right|^{2}\right]\\
\leq&\E\left[\int_{0}^{T}|D_{a}b(t,X_{t}^{\varphi},\varphi)\cdot(Y_{t}^{\varphi}-Y_{t}^{\theta})|^2+|(D_{a}b(t,X_{t}^{\varphi},\varphi)-D_{a}b(t,X_{t}^{\theta},\theta_{t}))\cdot Y_{t}^{\theta}|^2dt\right]\\
&+\E\left[\int_{0}^{T}|D_{a}\sigma(t,X_{t}^{\varphi},\varphi):(Z_{t}^{\varphi}-Z_{t}^{\theta})|^2+|(D_{a}\sigma(t,X_{t}^{\varphi},\varphi)-D_{a}\sigma(t,X_{t}^{\theta},\theta_{t})): Z_{t}^{\theta}|^2dt\right]\\
&+K\E\left[\int_{0}^{T}\left|X_{t}^{\varphi}-X_{t}^{\theta}\right|^2dt\right]+K\E\left[\int_{0}^{T}\left|\varphi_{t}-\theta_{t}\right|^2dt\right]\\
\leq&C_{K,C_{\mathbb H^\infty,\,\text{BMO}}}\E\left[\int_0^T|\varphi_t-\theta_t|^2dt\right]+\E\left[\int_0^T|D_{a}\sigma(t,X_{t}^{\varphi},\varphi)-D_{a}\sigma(t,X_{t}^{\theta},\theta_{t})|^2|Z_t^{\theta}|^2dt\right].
\end{align*}
This together with~\eqref{eq continuity of DaH step} concludes the proof.
\end{proof}
\begin{proof}[Proof of Theorem \ref{thm:existence_GF}]
From Lemma \ref{lemma L2 cont of DaH} 
we know that the map
\[
L^2_{\text{prog}}\left([0,T]\times\Omega;\mathbb{R}^p\right) \ni \alpha \mapsto D_a \mathcal{H}(X^\alpha, Y^\alpha, Z^\alpha, \alpha) \in L^2_{\text{prog}}\left([0,T]\times\Omega;\mathbb{R}^p\right)\
\]
is Lipschitz continuous.
Define 
\[
C(0,S; L^2_\text{prog}\left([0,T]\times\Omega;\mathbb{R}^p\right) ) \ni \alpha \mapsto \Psi(\alpha) \in C(0,S; L^2_\text{prog}\left([0,T]\times\Omega;\mathbb{R}^p\right) ) 
\]
by 
\[
\Psi(\alpha)_s = \alpha^{0} - \int_0^s D_a \mathcal{H}(X^{\alpha_r}, Y^{\alpha_r}, Z^{\alpha_r}, {\alpha_r})\,dr\,,
\]
where, for each $r \in [0,S]$, we have $X^{\alpha_r}, Y^{\alpha_r}, Z^{\alpha_r}$ corresponding to the unique solutions of the forward~\eqref{controlled_SDE_intro} and backward~\eqref{adjoint equation_intro} equations respectively, with the control $\alpha_r \in L^2_{\text{prog}}\left([0,T]\times\Omega;\mathbb{R}^p\right)$.

We will show that $\Psi$ composed with itself sufficiently many times is a contraction in the Banach space $C(0,S; L^2_\text{prog}([0,T]\times\Omega;\mathbb{R}^p))$. 
To that end consider $\alpha, \alpha' \in C(0,S; L^2_\text{prog}\left([0,T]\times\Omega;\mathbb{R}^p\right) )$.
Note that
\[
\begin{split}
\Psi(\alpha)_s - \Psi(\alpha')_s =  \int_0^s -D_a \mathcal{H}(X^{\alpha_r}, Y^{\alpha_r}, Z^{\alpha_r}, {\alpha_r}) + D_a \mathcal{H}(X^{\alpha'_r}, Y^{\alpha'_r}, Z^{\alpha'_r}, {\alpha'_r})\,dr\,.
\end{split}
\]
Squaring this and using H\"older's inequality leads to 
\[
\begin{split}
|\Psi(\alpha)_s - \Psi(\alpha')_s|^2 \leq s \int_0^s |D_a \mathcal{H}(X^{\alpha_r}, Y^{\alpha_r}, Z^{\alpha_r}, {\alpha_r}) - D_a \mathcal{H}(X^{\alpha'_r}, Y^{\alpha'_r}, Z^{\alpha'_r}, {\alpha'_r})|^2\,dr\,.
\end{split}
\]
Integrating over $\Omega \times [0,T]$ and using Fubini's theorem we see that
\begin{align*}
\|\Psi(\alpha)_s &- \Psi(\alpha')_s\|_{L^2_\text{prog}([0,T]\times\Omega;\mathbb{R}^p)}^2 \\
&\leq S \int_0^s \|D_a \mathcal{H}(X^{\alpha_r}, Y^{\alpha_r}, Z^{\alpha_r}, {\alpha_r}) - D_a \mathcal{H}(X^{\alpha'_r}, Y^{\alpha'_r}, Z^{\alpha'_r}, {\alpha'_r})\|_{L^2_\text{prog}\left([0,T]\times\Omega;\mathbb{R}^p\right)}^2\,dr\,.
\end{align*}
Using Lemma~\ref{lemma L2 cont of DaH} we obtain, for all $s\in [0,S]$, that
\begin{align}
\label{contraction_1}
\|\Psi(\alpha)_s - \Psi(\alpha')_s\|_{L^2_\text{prog}([0,T]\times\Omega;\mathbb{R}^p)}^2 \leq c s \int_0^s \|\alpha_r - \alpha'_r\|_{L^2_\text{prog}([0,T]\times\Omega;\mathbb{R}^p)}^2\,dr\,.
\end{align}
So for any  $s\in [0,S]$ we have that
\[
\begin{split}
& \|\Psi(\Psi(\alpha))_s - \Psi(\Psi(\alpha'))_s\|_{L^2_\text{prog}([0,T]\times\Omega;\mathbb{R}^p)}^2 
\leq c s \int_0^s \|\Psi(\alpha)_{s_1} - \Psi(\alpha')_{s_1}\|_{L^2_\text{prog}([0,T]\times\Omega;\mathbb{R}^p)}^2\,ds_1 \\
& \leq (c s)^2 \int_0^s \int_0^{s_1} \|\alpha_{s_2} - \alpha'_{s_2}\|_{L^2_\text{prog}([0,T]\times\Omega;\mathbb{R}^p)}^2\,ds_2\,ds_1\,.
\end{split}
\]
Let $\Psi^n$ denote the composition of $\Psi$ with itself $n$ times. 
Then
\[
\begin{split}
\|\Psi^n(\alpha)_s & - \Psi^n(\alpha')_s\|_{L^2_\text{prog}([0,T]\times\Omega;\mathbb{R}^p)}^2\\ 
& \leq (cs)^n \int_0^s \int_0^{s_1} \cdots \int_0^{s_{n-1}} \|\alpha_{s_n} - \alpha'_{s_n}\|_{L^2_\text{prog}([0,T]\times\Omega;\mathbb{R}^p)}^2 \,ds_n \cdots ds_2 \, ds_1\,.\\
& \leq (cs)^n \sup_{r \in [0,s]} \|\alpha_{r} - \alpha'_{r}\|_{L^2_\text{prog}([0,T]\times\Omega;\mathbb R^p)}^2  \int_0^s \int_0^{s_1} \cdots \int_0^{s_{n-1}} \,ds_n \cdots ds_2 \, ds_1\\
& = \frac{(cs)^n s^n}{n!} \sup_{r \in [0,s]} \|\alpha_{r} - \alpha'_{r}\|_{L^2_\text{prog}([0,T]\times\Omega;\mathbb R^p)}^2\,.
\end{split}
\]
Hence, taking first $s=S$ on the right-hand side (which preserves the inequality), then taking the supremum in $s\in[0,S]$ on the left-hand side and finally taking square root of both sides, we obtain
\[
\sup_{s\in[0,S]} \|\Psi^n(\alpha)_s - \Psi^n(\alpha')_s\|_{L^2_\text{prog}([0,T]\times\Omega;\mathbb{R}^p)}
\leq
\bigg(\frac{c^n S^{2n}}{n!}\bigg)^{1/2}\sup_{s \in [0,S]} \|\alpha_{s} - \alpha'_{s}\|_{L^2_\text{prog}([0,T]\times\Omega;\mathbb R^p)}\,.
\]
Taking $n\in \mathbb N$ large enough shows the map is a contraction on $C(0,S; L^2_\text{prog}([0,T]\times\Omega;\mathbb{R}^p))$ which is a Banach space. 
Hence, by Banach's fixed point theorem there is a unique fixed point $\alpha \in C(0,S; L^2_\text{prog}([0,T]\times\Omega;\mathbb{R}^p))$
which satisfies for all $s\in [0,S]$ 
\[
\alpha_s = \alpha^{0} - \int_0^s D_a \mathcal H(X^{\alpha_r}, Y^{\alpha_r}, Z^{\alpha_r}, {\alpha_r})\,dr\,.
\]
We thus see that $s\mapsto \alpha_s \in L^2_\text{prog}([0,T]\times\Omega;\mathbb{R}^p))$ is absolutely continuous. This implies that $\frac{d}{ds}\alpha_s$ exists almost everywhere in $[0,S]$ and
\[
\tfrac{d}{ds}\alpha_s = -D_a \mathcal H(X^{\alpha_s}, Y^{\alpha_s}, Z^{\alpha_s}, {\alpha_s})\,\,\, \text{a.e. in} \,\,\, [0,S]\,, \alpha_0 = \alpha^0\,.
\]
Thus the proof is complete.
\end{proof}

\section{Proof of Convergence of Modified MSA to a Gradient Flow}\label{Existence of the Gradient Flow}

In order to prove the convergence of the Modified MSA to a gradient flow we need to ensure that (\ref{gradient_requirement}) is well defined, i.e. the $\argmin$ is single valued. 
\begin{lemma}
\label{Well-defined-gradien-proposition}
Let Assumption \ref{new lambda convexity} hold. Then 
for any $a'\in \mathbb{R}^{p}$ and for any $(t,x,y,z)$ the map 
\begin{align*}
a\mapsto\mathcal{H}(t,x,y,z,a)+\frac{\lambda}{2}\left|a-a'\right|^{2}
\end{align*}
is convex.
\end{lemma}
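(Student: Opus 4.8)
The plan is to reduce the asserted convexity directly to Assumption~\ref{new lambda convexity} by expanding the shifted square; there is essentially no work beyond that. First I would fix $(t,x,y,z)\in[0,T]\times\mathbb{R}^{d}\times\mathbb{R}^{d}\times\mathbb{R}^{d\times d'}$ and $a'\in\mathbb{R}^{p}$, and use the algebraic identity $|a-a'|^{2}=|a|^{2}-2\,a\cdot a'+|a'|^{2}$ to write, for every $a\in\mathbb{R}^{p}$,
\begin{align*}
\mathcal{H}(t,x,y,z,a)+\tfrac{\lambda}{2}|a-a'|^{2}=\Big(\mathcal{H}(t,x,y,z,a)+\tfrac{\lambda}{2}|a|^{2}\Big)-\lambda\,a\cdot a'+\tfrac{\lambda}{2}|a'|^{2}.
\end{align*}

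Next I would observe that the right-hand side is a sum of three terms in $a$: the first, $a\mapsto\mathcal{H}(t,x,y,z,a)+\tfrac{\lambda}{2}|a|^{2}$, is convex precisely by Assumption~\ref{new lambda convexity}; the second, $a\mapsto-\lambda\,a\cdot a'$, is affine (linear in $a$) irrespective of the sign of $\lambda$; and the third, $\tfrac{\lambda}{2}|a'|^{2}$, is a constant. Since adding an affine function and a constant to a convex function preserves convexity, the map $a\mapsto\mathcal{H}(t,x,y,z,a)+\tfrac{\lambda}{2}|a-a'|^{2}$ is convex, which is exactly the claim.

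The point worth flagging --- and the only reason the statement is not completely vacuous --- is that $\lambda$ is not assumed to be nonnegative, so one cannot simply say that $a\mapsto\tfrac{\lambda}{2}|a-a'|^{2}$ is convex by itself; the convexity must be extracted from the combination with $\mathcal{H}$ supplied by the hypothesis, while the cross term $-\lambda\,a\cdot a'$ produced by the shift is harmless because it is linear. I do not anticipate any genuine obstacle. This lemma is what licenses centring the quadratic penalty in the update step~\eqref{gradient_requirement} at the previous iterate $\alpha^{n}_{t}$ rather than at the origin; combined with the condition $\tau>\lambda^{-1}$ it upgrades to strict convexity and hence single-valuedness of the $\argmin$, as recorded in Remark~\ref{Well-Posedness-of-gradient-condition}.
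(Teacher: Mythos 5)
Your proof is correct. The paper proves the same lemma by writing out the first-order convexity inequality for $a\mapsto\mathcal{H}(t,x,y,z,a)+\tfrac{\lambda}{2}|a|^{2}$ at an arbitrary point $b$ and then rearranging, using exactly the expansion $|a-a'|^{2}=|a|^{2}-2a\cdot a'+|a'|^{2}$ inside that inequality, to arrive at the first-order inequality for the shifted map. Your route packages the identical algebraic observation more cleanly: the shifted map differs from the $\lambda$-convexified Hamiltonian by the affine function $a\mapsto-\lambda\,a\cdot a'+\tfrac{\lambda}{2}|a'|^{2}$, and adding an affine function preserves convexity. This buys you two small things over the paper's version: you never need $\mathcal{H}$ to be differentiable in $a$ (the paper's argument invokes $D_{a}\mathcal{H}$, so it implicitly assumes differentiability, which is only guaranteed under later assumptions), and your remark that the sign of $\lambda$ is irrelevant because the cross term is linear is exactly the right point to flag --- it is why one cannot shortcut the argument by claiming $a\mapsto\tfrac{\lambda}{2}|a-a'|^{2}$ is convex on its own.
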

Lemma~\ref{Well-defined-gradien-proposition} will be proved in Section \ref{Well-Posedness-of-gradient-condition-proof}.
\begin{remark}\label{Well-Posedness-of-gradient-condition}\textbf{Well-Posedness of (\ref{gradient_requirement}):}
In order for equation (\ref{gradient_requirement}) to be well defined we need to make sure that the constant $\tau>0$ can be chosen independently of $(t,\omega)\in[0,T]\times\Omega$. 
To see why this is the case note that for any $(t,\omega)\in[0,T]\times\Omega$ we can pick a single $\lambda$ (thanks to Assumption \ref{new lambda convexity} and Lemma \ref{Well-defined-gradien-proposition}) such that the map 
\begin{align*}
a\mapsto\mathcal{H}\left(t,X_{t}^{n},Y_{t}^{n},Z_{t}^{n},a\right)+\tfrac{1}{2}\lambda\left|a-\alpha^{n}_{t}\right|,
\end{align*}
is convex. With $\tau<\lambda^{-1}$ we have strict convexity.
\end{remark}
Next we will show that the sequences 
$\left(\hat{\alpha}^{\tau}\right)_{\tau}$, $(\alpha^{\tau,+})_{\tau}$ and $(\alpha^{\tau,-})_{\tau}$ are bounded in $C\left([0,S];L^{2}_{\text{prog}}\left([0,T]\times\Omega;\mathbb{R}^{p}\right)\right)$ uniformly in $\tau$.

\begin{lemma}\label{prori estimate 2}
Let Assumptions~\ref{Assumptions_Old}, \ref{Ass_2nd_derivative_sapce}, \ref{additional_assumptions}, \ref{ass: D-xf and D_x g} and \ref{compactness_of_orbits_assumption} hold. Then there exists a constant $C>0$ which depends on $K,T$ and $J(\alpha^0)$ only  such that $\forall \tau>0$ and $\forall s\in[0,S]$
\begin{align*}
\|\hat{\alpha}^{\tau}_{s}\|_{L^{2}_{\text{prog}}\left([0,T]\times\Omega;\mathbb{R}^{p}\right)}+\|\alpha^{\tau,+}_{s}\|_{L^{2}_{\text{prog}}\left([0,T]\times\Omega;\mathbb{R}^{p}\right)}+\|\alpha^{\tau,-}_{s}\|_{L^{2}_{\text{prog}}\left([0,T]\times\Omega;\mathbb{R}^{p}\right)}\leq C.
\end{align*}
\end{lemma}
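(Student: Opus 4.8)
The plan is to exploit the fact, established in Remark~\ref{rmk:cost_functional_increment_bound}, that with $\tfrac1{2\tau}\geq C_{J,\mathcal H}$ the sequence $(J(\alpha^n))_n$ is non-increasing, together with the coercivity built into Assumption~\ref{compactness_of_orbits_assumption}. First I would recall from the argmin step~\eqref{gradient_requirement} (equivalently the telescoped form in Remark~\ref{rmk:cost_functional_increment_bound}) that for every $n$,
\[
\E\int_0^T \mathcal H(t,X^{n}_t,Y^{n}_t,Z^{n}_t,\alpha^{n+1}_t)\,dt
+\frac1{2\tau}\E\int_0^T|\alpha^{n+1}_t-\alpha^{n}_t|^2\,dt
\leq \E\int_0^T \mathcal H(t,X^{n}_t,Y^{n}_t,Z^{n}_t,\alpha^{n}_t)\,dt\,,
\]
and hence $J(\alpha^n)\le J(\alpha^0)$ for all $n\in\{0,1,\dots,N_\tau\}$, with the bound $J(\alpha^0)$ independent of $\tau$ (and of $n$). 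This is the key quantity to propagate.

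Next I would convert the bound on $J$ into an $L^2$ bound on the controls using the coercivity of $f$ and lower bound on $g$ from Assumption~\ref{compactness_of_orbits_assumption}: since $f(t,x,a)\ge -K+K^{-1}|a|^2$ and $g(x)\ge -K$,
\[
J(\alpha^n)=\E\left[\int_0^T f(t,X^n_t,\alpha^n_t)\,dt + g(X^n_T)\right]
\ge K^{-1}\E\int_0^T|\alpha^n_t|^2\,dt - K(T+1)\,,
\]
so that
\[
\|\alpha^n\|_{L^2_{\text{prog}}([0,T]\times\Omega;\mathbb R^p)}^2
\le K\bigl(J(\alpha^n)+K(T+1)\bigr)\le K\bigl(J(\alpha^0)+K(T+1)\bigr)=:C^2\,,
\]
a bound depending only on $K$, $T$ and $J(\alpha^0)$ and in particular uniform in $\tau$ and in $n\le N_\tau$. (Here one uses that $X^n$ is the controlled SDE solution associated to the admissible control $\alpha^n$, so $J(\alpha^n)$ is well-defined and finite by Theorem~\ref{2nd-moments-controlled-SDEs}.)

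Finally I would transfer this to the three interpolants. For $\alpha^{\tau,+}_s$ and $\alpha^{\tau,-}_s$ this is immediate, since for $s\in((n-1)\tau,n\tau]$ they equal $\alpha^n$ resp.\ $\alpha^{n-1}$ (and $\alpha^0$ at $s=0$), so their $L^2_{\text{prog}}$ norms are bounded by $C$ directly. For the piecewise-linear interpolant $\hat\alpha^\tau_s$, on $s\in((n-1)\tau,n\tau]$ it is a convex combination $\hat\alpha^\tau_{s}=(1-\mu)\alpha^{n-1}+\mu\,\alpha^{n}$ with $\mu=\tfrac{s-(n-1)\tau}{\tau}\in[0,1]$, so by the triangle inequality (or convexity of the norm) $\|\hat\alpha^\tau_s\|\le(1-\mu)\|\alpha^{n-1}\|+\mu\|\alpha^{n}\|\le C$ as well. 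Summing the three contributions gives the claimed bound (with $C$ replaced by $3C$, still depending only on $K$, $T$, $J(\alpha^0)$).

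The only genuine subtlety — the ``main obstacle'' — is making sure the monotonicity $J(\alpha^{n+1})\le J(\alpha^n)$ is actually available: this needs $\tfrac1{2\tau}\ge C_{J,\mathcal H}$, which is guaranteed by the initialisation step $\tau<\max\{\lambda,2C_{J,\mathcal H}\}^{-1}$ in Algorithm~\ref{alg mmsa}, and it needs Lemma~\ref{cost_functional_increment_bound}, whose hypotheses (Assumptions~\ref{Assumptions_Old}, \ref{Ass_2nd_derivative_sapce}, \ref{additional_assumptions}, \ref{ass: D-xf and D_x g}) are all among those assumed here. Everything else is a routine chain of inequalities, and no BMO machinery is needed for this particular lemma.
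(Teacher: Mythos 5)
Your proposal is correct and follows essentially the same route as the paper: monotonicity of $J(\alpha^n)$ from Lemma~\ref{cost_functional_increment_bound} and Remark~\ref{rmk:cost_functional_increment_bound}, the coercivity of $f$ and lower bound on $g$ from Assumption~\ref{compactness_of_orbits_assumption} to get the uniform bound $K(J(\alpha^0)+K(T+1))$ on $\|\alpha^n\|^2$, and then the convex-combination/triangle-inequality argument for $\hat\alpha^\tau$. Your remark that the "$\forall\tau>0$" in the statement implicitly relies on the step-size restriction $\tfrac1{2\tau}\ge C_{J,\mathcal H}$ from the algorithm's initialisation is a fair and accurate observation about the paper's own argument.
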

\begin{proof}
For each $\tau>0$, we know from Lemma \ref{cost_functional_increment_bound} and Remark \ref{rmk:cost_functional_increment_bound} that $J(\alpha^{\tau,-}_{s})$ decreases in $s$. 
Therefore, 
\begin{align*}
J(\alpha^{0})\geq J(\alpha_{s}^{\tau,-})=\E\left[\int_0^{T}f\left(t,X_{s,t}^{\tau,-},\alpha^{\tau,-}_{s,t}\right)dt+g(X_{s,t}^{\tau,-})\right],
\end{align*}
where $X_{s}^{\tau,-}$ is the solution of the controlled SDE corresponding to the control $\alpha^{\tau,-}_{s}$. Now using Assumption \ref{compactness_of_orbits_assumption} we have 
\begin{align*}
J(\alpha^{0})\geq\E\left[\int_0^{T}-K+K^{-1}\left|\alpha_{s,t}^{\tau,-}\right|^{2}dt-K\right]=-KT-K+K^{-1}\E\left[\int_0^T\left|\alpha_{s,t}^{\tau,-}\right|^{2}dt\right].
\end{align*}
Therefore
\begin{align*}
\sup_{\tau}\sup_{s\in[0,S]}\E\left[\int_0^T\left|\alpha_{s,t}^{\tau,-}\right|^{2}dt\right]\leq K\left(J(\alpha^{0})+K(T+1)\right).
\end{align*}
The bound for the sequence $(\alpha^{\tau,+})$ is identical and we omit the details. Using this inequality for any $s\in[0,S]$, we have that $s\in\left((n-1)\tau,n\tau\right]$ for some $n\in\{1,\dots,N_{\tau}\}$,
\begin{align*}
&\|\hat{\alpha}^{\tau}_{s}\|^{2}_{L^{2}_{\text{prog}}\left([0,T]\times\Omega;\mathbb{R}^p\right)}=\left\|\left(1-\tfrac{s-(n-1)\tau}{\tau}\right)\alpha^{n-1}_t+\left(\tfrac{s-(n-1)\tau}{\tau}\right)\alpha_t^{n}\right\|^{2}_{L^{2}_{\text{prog}}\left([0,T]\times\Omega;\mathbb{R}^p\right)}\\
&\leq \left(\left\|\left(1-\tfrac{s-(n-1)\tau}{\tau}\right)\alpha_t^{n-1}\right\|_{L^{2}_{\text{prog}}\left([0,T]\times\Omega;\mathbb{R}^p\right)}+\left\|\left(\tfrac{s-(n-1)\tau}{\tau}\right)\alpha_t^{n}\right\|_{L^{2}_{\text{prog}}\left([0,T]\times\Omega;\mathbb{R}^p\right)}\right)^{2}\\
&\leq \left(\left\|\alpha_t^{n-1}\right\|_{L^{2}_{\text{prog}}\left([0,T]\times\Omega;\mathbb{R}^p\right)}+\left\|\alpha_t^{n}\right\|_{L^{2}_{\text{prog}}\left([0,T]\times\Omega;\mathbb{R}^p\right)}\right)^{2}\\
&\leq 2\left(\left\|\alpha_t^{n-1}\right\|^{2}_{L^{2}_{\text{prog}}\left([0,T]\times\Omega;\mathbb{R}^p\right)}+\left\|\alpha_t^{n}\right\|^{2}_{L^{2}_{\text{prog}}\left([0,T]\times\Omega;\mathbb{R}^p\right)}\right)\leq 4\sup_{\tau}\sup_{s\in[0,S]}\left\|\alpha^{\tau,-}_{s}\right\|^{2}_{L^{2}_{\text{prog}}([0,T]\times\Omega;\mathbb{R}^p)}\\
&\leq 2^{2}K(J(\alpha^0)+K(T+1)).
\end{align*}
The result then follows from taking the supremum over $s\in[0,S]$.
\end{proof}
Before proving the convergence of the sequences $\left(\alpha^{\tau,+}\right)_{\tau}$, $\left(\alpha^{\tau,-}\right)_{\tau}$ and $\left(\hat{\alpha}^{\tau}\right)_{\tau}$ we need the following three Lemmas.

\begin{lemma}
\label{3_diff_controls}
Let Assumptions \ref{Assumptions_Old}, \ref{Ass_2nd_derivative_sapce}, \ref{additional_assumptions}, \ref{ass: D-xf and D_x g} ,\ref{assumption_for_uncontrolled_vol} and \ref{ass:D_a^2sig} hold.
Then there exists a constant $C$ depending on $K$, $T$ and $C_{\mathbb{H}^\infty,{BMO}}$ 
such that for any $\varphi, \phi, \theta\in L^2_{\text{prog}}\left([0,T]\times\Omega;\mathbb{R}^p\right)$  we have
\begin{align*}
\E\bigg[\int_0^T\bigg|D_a\mathcal{H}\left(t,X_t^\varphi,Y_t^\varphi,Z_t^\varphi,\phi_t\right)&-D_a\mathcal{H}(t,X_{t}^\theta,Y_{t}^\theta,Z_{t}^\theta,\theta_t)\bigg|^2dt\bigg]\\
&\leq C\left(\E\left[\int_0^T\left|\phi_t-\varphi_t\right|^2dt\right]+\E\left[\int_0^T\left|\varphi_t-\theta_t\right|^2dt\right]\right).
\end{align*}
\end{lemma}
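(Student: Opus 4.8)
The plan is to reduce the estimate to Lemma~\ref{lemma L2 cont of DaH} by inserting the intermediate quantity $D_a\mathcal{H}(t,X_t^\varphi,Y_t^\varphi,Z_t^\varphi,\varphi_t)$ and using $|u+v|^2\le 2|u|^2+2|v|^2$. The difference $D_a\mathcal{H}(t,X_t^\varphi,Y_t^\varphi,Z_t^\varphi,\varphi_t)-D_a\mathcal{H}(t,X_t^\theta,Y_t^\theta,Z_t^\theta,\theta_t)$ is exactly the quantity estimated in Lemma~\ref{lemma L2 cont of DaH}, whose hypotheses coincide with those imposed here, so after integration over $[0,T]\times\Omega$ its square contributes at most $C\,\E\int_0^T|\varphi_t-\theta_t|^2\,dt$. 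It then remains to control the ``control-only'' increment $D_a\mathcal{H}(t,X_t^\varphi,Y_t^\varphi,Z_t^\varphi,\phi_t)-D_a\mathcal{H}(t,X_t^\varphi,Y_t^\varphi,Z_t^\varphi,\varphi_t)$, in which the processes $X^\varphi,Y^\varphi,Z^\varphi$ are frozen and only the last slot changes.

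For this increment I would expand $D_a\mathcal{H}=D_ab\cdot y+D_a\sigma:z+D_af$. The key simplification is Assumption~\ref{ass:D_a^2sig}: since $D_a^2\sigma\equiv 0$, the map $a\mapsto D_a\sigma(t,x,a)$ is constant, so the $\sigma$-term, which is the one carrying $Z^\varphi$, cancels identically. For the $b$- and $f$-terms, Assumption~\ref{assumption_for_uncontrolled_vol} together with the mean value theorem gives the Lipschitz-in-$a$ bounds $|D_ab(t,X_t^\varphi,\phi_t)-D_ab(t,X_t^\varphi,\varphi_t)|\le K|\phi_t-\varphi_t|$ and the analogous estimate for $f$. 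Hence the control-only increment is bounded pointwise by $C|\phi_t-\varphi_t|\,(|Y_t^\varphi|+1)$.

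Integrating, squaring and invoking the uniform bound $\|Y^\varphi\|_{\mathbb{H}^\infty_d}\le C_{\mathbb{H}^\infty,\,\text{BMO}}$ from Lemma~\ref{technical_result_for_BSDEs} yields $\E\int_0^T|D_a\mathcal{H}(t,X_t^\varphi,Y_t^\varphi,Z_t^\varphi,\phi_t)-D_a\mathcal{H}(t,X_t^\varphi,Y_t^\varphi,Z_t^\varphi,\varphi_t)|^2\,dt\le C(1+C_{\mathbb{H}^\infty,\,\text{BMO}}^2)\,\E\int_0^T|\phi_t-\varphi_t|^2\,dt$. Combining the two contributions gives the claimed inequality with a constant depending only on $K$, $T$ and $C_{\mathbb{H}^\infty,\,\text{BMO}}$.

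I expect the only genuinely delicate point to be the treatment of the $Z^\varphi$-term in the control-only increment: since $\phi-\varphi$ is merely square-integrable and not essentially bounded, one cannot afford to run the Fefferman/BMO estimate of Lemma~\ref{lem:fefferman ext} on a term of the form $|D_a\sigma(t,X_t^\varphi,\phi_t)-D_a\sigma(t,X_t^\varphi,\varphi_t)|\,|Z_t^\varphi|$. It is precisely Assumption~\ref{ass:D_a^2sig} that makes this term vanish and lets the argument close; the genuinely hard BMO estimate is already packaged inside Lemma~\ref{lemma L2 cont of DaH}, which I use as a black box.
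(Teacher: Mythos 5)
Your proposal is correct and follows essentially the same route as the paper: the paper likewise splits the difference via the intermediate term $D_a\mathcal{H}(t,X_t^\varphi,Y_t^\varphi,Z_t^\varphi,\varphi_t)$, bounds the second piece by Lemma~\ref{lemma L2 cont of DaH}, and bounds the control-only increment using the vanishing of the $\sigma$-term (Assumption~\ref{ass:D_a^2sig}), the Lipschitz-in-$a$ bounds on $D_ab$ and $D_af$ from Assumption~\ref{assumption_for_uncontrolled_vol}, and the uniform bound on $Y^\varphi$ from Lemma~\ref{technical_result_for_BSDEs}. Your remark on why Assumption~\ref{ass:D_a^2sig} is indispensable for the $Z^\varphi$-term matches the role it plays in the paper's argument.
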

\begin{proof}
Define 
\begin{align*}
I_1:=&\E\left[\int_0^T\left|D_a\mathcal{H}(t,X_t^\varphi,Y_t^\varphi,Z_t^\varphi,\phi_t)-D_a\mathcal{H}(t,X_t^\varphi,Y_t^\varphi,Z_t^\varphi,\varphi_t)\right|^2dt\right]\\
I_2:=&\E\left[\int_0^T\left|D_a\mathcal{H}(t,X_t^\varphi,Y_t^\varphi,Z_t^\varphi,\varphi_t)-D_a\mathcal{H}(t,X_{t}^\theta,Y_{t}^\theta,Z_{t}^\theta,\theta_t)\right|^2dt\right].
\end{align*}
From Lemma \ref{lemma L2 cont of DaH} we have $I_2\leq C\E\left[\int_0^T\left|\varphi_t-\theta_t\right|^2dt\right]$.
Moreover
\begin{align*}
I_1\leq&\E\left[\int_0^T\left|D_ab(t,X_t^\varphi,\phi_t)-D_ab(t,X_t^\varphi,\varphi_t)\right|^2\cdot\left|Y_t^\varphi\right|^2dt\right]\\
&+\E\left[\int_0^T\left|D_af(t,X_t^\varphi,\phi_t)-D_af(t,X_t^\varphi,\varphi_t)\right|^2dt\right]\\
\leq& K^2\left(1+C^2_{\mathbb{H}^\infty,BMO}\right)\E\left[\int_0^T\left|\phi_t-\varphi_t\right|^2dt\right].
\end{align*}
This completes the proof.
\end{proof}

\begin{lemma}
\label{lem:D_aH convex}
Let Assumption \ref{new lambda convexity} hold and assume the Hamiltonian is differentiable in $a$.
Then for any $(t,x,y,z)\in[0,T]\times\mathbb{R}^{d}\times\mathbb{R}^{d}\times\mathbb{R}^{d\times d'}$
\begin{align*}
\left(a-a'\right)\cdot\left(D_a\mathcal{H}(t,x,y,z,a)-D_a\mathcal{H}(t,x,y,z,a')\right)\geq-\lambda|a-a'|^2.
\end{align*}
\end{lemma}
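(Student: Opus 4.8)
The plan is to recognise this as the standard fact that a differentiable convex function has a monotone gradient, applied to the perturbed map from Assumption~\ref{new lambda convexity}. Fix $(t,x,y,z)\in[0,T]\times\mathbb{R}^{d}\times\mathbb{R}^{d}\times\mathbb{R}^{d\times d'}$ and define $G:\mathbb{R}^p\to\mathbb{R}$ by $G(a):=\mathcal{H}(t,x,y,z,a)+\tfrac{\lambda}{2}|a|^2$. By Assumption~\ref{new lambda convexity} the function $G$ is convex, and since $\mathcal{H}$ is differentiable in $a$ (by hypothesis) and $a\mapsto\tfrac{\lambda}{2}|a|^2$ is smooth, $G$ is differentiable with $D_aG(a)=D_a\mathcal{H}(t,x,y,z,a)+\lambda a$.

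Next I would invoke the first-order characterisation of convexity for differentiable functions: for all $a,a'\in\mathbb{R}^p$,
\[
G(a')\geq G(a)+D_aG(a)\cdot(a'-a)\qquad\text{and}\qquad G(a)\geq G(a')+D_aG(a')\cdot(a-a')\,.
\]
Adding these two inequalities and cancelling the $G$ terms gives the monotonicity of $D_aG$:
\[
(a-a')\cdot\bigl(D_aG(a)-D_aG(a')\bigr)\geq 0\,.
\]

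Finally I substitute $D_aG(a)=D_a\mathcal{H}(t,x,y,z,a)+\lambda a$ and the analogous expression at $a'$, obtaining
\[
(a-a')\cdot\bigl(D_a\mathcal{H}(t,x,y,z,a)-D_a\mathcal{H}(t,x,y,z,a')\bigr)+\lambda|a-a'|^2\geq 0\,,
\]
which rearranges to the claimed inequality. (If one prefers to avoid quoting the first-order characterisation, the same conclusion follows by applying it to the one-variable convex function $\varphi(r):=G(a'+r(a-a'))$, whose derivative $\varphi'(r)=D_aG(a'+r(a-a'))\cdot(a-a')$ is nondecreasing, so $\varphi'(1)\geq\varphi'(0)$ yields exactly $(a-a')\cdot(D_aG(a)-D_aG(a'))\geq 0$.) There is no real obstacle here; the only points meriting a line of justification are the differentiability of $G$ and the passage from convexity to gradient monotonicity, both of which are routine.
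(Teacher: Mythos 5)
Your proof is correct and follows essentially the same route as the paper: both apply the first-order convexity inequality to the $\lambda$-perturbed map $a\mapsto\mathcal{H}(t,x,y,z,a)+\tfrac{\lambda}{2}|a|^2$ at the two points and add the resulting inequalities to obtain monotonicity of the gradient. Your version is in fact written more cleanly (the paper's displayed inequality contains a small slip, $\lambda a\cdot(a-a')$ where the gradient term should read $\lambda a'\cdot(a-a')$), and your remark on differentiability of the perturbed map is a worthwhile addition.
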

\begin{proof}
From Assumption \ref{new lambda convexity} we see that for any $(t,x,y,z)\in[0,T]\times\mathbb{R}^{d}\times\mathbb{R}^{d}\times\mathbb{R}^{d\times d'}$ the following holds,
\begin{align*}
\mathcal{H}(t,x,y,z,a)+\tfrac{\lambda}{2}|a|^2-\mathcal{H}(t,x,y,z,a')-\tfrac{\lambda}{2}|a'|^2\geq& D_a\mathcal{H}(t,x,y,z,a')\cdot(a-a')\\
&+\lambda a\cdot(a-a').
\end{align*}
Now reversing the role of $a$ and $a'$ and adding both inequalities together gives the required result.
\end{proof}

\begin{lemma}
\label{lem:D_aH_bound}
Let Assumptions \ref{Assumptions_Old},   \ref{Ass_2nd_derivative_sapce}, \ref{additional_assumptions}, \ref{ass: D-xf and D_x g}, \ref{ass: one for Gateuax} and \ref{compactness_of_orbits_assumption} hold. 
For $\tau>0$ and $1\leq n\leq N_\tau$ let $X^n,\left(Y^n,Z^n\right)$ represent the 
solutions to the forward SDE \eqref{controlled_SDE} and backwards SDE \eqref{adjoint equation} corresponding to the control 
$\alpha^n$. Then there exists a constant $C$ depending on $T,K,d,d',\alpha^0$ and $C_{\mathbb{H}^\infty,BMO}$ such that for all $n\in\mathbb{N}$ we have
\begin{align*}
\E\left[\int_0^T\left|D_a\mathcal{H}\left(t,X_t^{n-1},Y_t^{n-1},Z_t^{n-1},\alpha_t^n\right)\right|^2dt\right]\leq C.
\end{align*}
\end{lemma}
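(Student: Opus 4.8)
The plan is to bound $D_a\mathcal{H}$ pointwise in $(t,\omega)$ using the structure of the Hamiltonian together with the growth and boundedness hypotheses on the derivatives of $b$, $\sigma$ and $f$, and then to integrate and control each resulting term using the a priori estimates already in hand. Since $\mathcal{H}(t,x,y,z,a)=b(t,x,a)\cdot y+\sigma(t,x,a):z+f(t,x,a)$, differentiating in $a$ and using the Cauchy--Schwarz inequality gives
\begin{align*}
|D_a\mathcal{H}(t,x,y,z,a)|\leq|D_a b(t,x,a)|\,|y|+|D_a\sigma(t,x,a)|\,|z|+|D_a f(t,x,a)|\,.
\end{align*}
By Assumption~\ref{additional_assumptions} the factors $|D_a b|$ and $|D_a\sigma|$ are bounded by $K$, while Assumption~\ref{ass: one for Gateuax} gives $|D_a f(t,x,a)|\leq K(1+|x|+|a|)$. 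Hence there is a constant $C=C(K,d,d',p)$ such that
\begin{align*}
\big|D_a\mathcal{H}(t,X_t^{n-1},Y_t^{n-1},Z_t^{n-1},\alpha_t^n)\big|^2\leq C\big(1+|X_t^{n-1}|^2+|Y_t^{n-1}|^2+|Z_t^{n-1}|^2+|\alpha_t^n|^2\big)\,.
\end{align*}

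Next I would integrate this over $[0,T]\times\Omega$ and treat the four terms separately. The $X^{n-1}$ term is bounded by $T\,\E[\sup_{t\leq T}|X_t^{n-1}|^2]\leq cT(1+|x|^2)$ via Theorem~\ref{2nd-moments-controlled-SDEs}. The $Y^{n-1}$ term is bounded by $T\|Y^{n-1}\|_{\mathbb{H}_d^\infty}^2\leq TC_{\mathbb{H}^\infty,BMO}^2$ by Lemma~\ref{technical_result_for_BSDEs}. For the $Z^{n-1}$ term one must use the $\mathcal K$-norm bound of Lemma~\ref{technical_result_for_BSDEs} rather than an $\mathbb{H}^2$-estimate: taking the degenerate stopping time $\tau\equiv 0$ in the definition of $\|\cdot\|_{\mathcal K}$ and then taking expectations yields $\E\int_0^T|Z_t^{n-1}|^2\,dt\leq\|Z^{n-1}\|_{\mathcal K}^2\leq C_{\mathbb{H}^\infty,BMO}^2$. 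Finally, since $\alpha^n=\alpha^{\tau,+}_{s}$ for $s\in((n-1)\tau,n\tau]$, Lemma~\ref{prori estimate 2} gives $\E\int_0^T|\alpha_t^n|^2\,dt=\|\alpha^{\tau,+}_{n\tau}\|_{L^2_{\text{prog}}}^2\leq C$ with $C$ depending only on $K$, $T$ and $J(\alpha^0)$; alternatively this bound can be re-derived directly, exactly as in the proof of Lemma~\ref{prori estimate 2}, from $J(\alpha^n)\leq J(\alpha^0)$ (Remark~\ref{rmk:cost_functional_increment_bound}, valid once $\tfrac1{2\tau}\geq C_{J,\mathcal H}$) together with Assumption~\ref{compactness_of_orbits_assumption}. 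Summing the four estimates produces a bound that is independent of $n$ and $\tau$ and depends only on $T$, $K$, $d$, $d'$, $\alpha^0$ and $C_{\mathbb{H}^\infty,BMO}$, as claimed.

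There is no genuine obstacle here: the entire content of the statement is already carried by the a priori estimates proven earlier, in particular the BMO estimates for the adjoint BSDE (Lemma~\ref{technical_result_for_BSDEs}) and the uniform-in-$\tau$ control bound (Lemma~\ref{prori estimate 2}), which itself rests on the monotone decrease of $J$ along the iterates under the smallness assumption on $\tau$. The only point requiring a little care is the $Z^{n-1}$ term, which must be controlled through the $\mathcal K$-norm so as to keep the bound uniform in $n$; everything else is a routine application of Cauchy--Schwarz and the assumed growth bounds.
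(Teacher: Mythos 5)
Your proof is correct, and the overall decomposition of $D_a\mathcal{H}$ into the $b$, $\sigma$ and $f$ contributions, together with the use of the $\mathbb{H}^\infty$ bound on $Y^{n-1}$ and of Lemma~\ref{prori estimate 2} for $\E\int_0^T|\alpha_t^n|^2dt$, matches the paper's argument. Where you diverge is in the treatment of the $X^{n-1}$ and, more interestingly, the $Z^{n-1}$ terms. The paper controls $\E\int_0^T|Z_t^{n-1}|^2dt$ indirectly, via the stability estimate of Lemma~\ref{lemma bsde stability} applied to the pair $(\alpha^{n-1},\alpha^0)$ combined with the uniform control bound of Lemma~\ref{prori estimate 2}, so that the bound passes through $Z^0$ and picks up a dependence on $\alpha^0$; it handles $X^{n-1}$ the same way. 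You instead observe that the degenerate stopping time $\tau\equiv 0$ in the definition of $\|\cdot\|_{\mathcal K}$, followed by the tower property, yields $\E\int_0^T|Z_t^{n-1}|^2dt\leq\|Z^{n-1}\|_{\mathcal K}^2\leq C_{\mathbb H^\infty,\,\text{BMO}}^2$ directly from Lemma~\ref{technical_result_for_BSDEs}, uniformly over all admissible controls, and you bound the $X^{n-1}$ term by the uniform moment estimate of Theorem~\ref{2nd-moments-controlled-SDEs}. This is cleaner: it removes the detour through the stability lemma and makes two of the four terms manifestly independent of $n$, $\tau$ and $\alpha^0$ (the final constant still depends on $\alpha^0$ only through $J(\alpha^0)$ in the $\alpha^n$ term). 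One small point worth making explicit, which affects both your argument and the paper's equally: the bound on $\E\int_0^T|\alpha_t^n|^2dt$ rests on the monotone decrease of $J$ along the iterates, hence on the standing step-size condition $\tfrac{1}{2\tau}\geq C_{J,\mathcal H}$ from Remark~\ref{rmk:cost_functional_increment_bound}; you flag this correctly.
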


\begin{proof}
From Lemma \ref{technical_result_for_BSDEs} and Assumption \ref{ass: one for Gateuax}
\begin{align*}
& \E\left[\int_0^T\left|D_a\mathcal{H}\left(X^{n-1},Y^{n-1},Z^{n-1},\alpha^n\right)\right|^2dt\right]\\
& \qquad \leq TK^2C_{\mathbb{H}^\infty,BMO}+K^2\E\left[\int_0^T\left|Z^{n-1}\right|^2dt\right]\\
& \qquad \qquad + 4K^2\left(T+\E\left[\int_0^T\left|X^{n-1}\right|^2dt\right]+\E\left[\int_0^T\left|\alpha^n\right|^2dt\right]\right).
\end{align*}
From Lemma \ref{prori estimate 2} we have have uniform in $n$ bound for the term $\E\left[\int_0^T|\alpha^n|^2dt\right]$. 
In order to bound the term $\E\left[\int_0^T\left|Z^{n-1}\right|^2dt\right]$ we use Lemma \ref{prori estimate 2} and Lemma \ref{lemma bsde stability} as follows:
\begin{align*}
&\E\left[\int_0^T\left|Z^{n-1}\right|^2dt\right]\leq C_{K,T,d,d'}\E\left[\int_0^T\left|\alpha^{n-1}-\alpha^0\right|^2dt\right]+2\E\left[\int_0^T\left|Z^{0}\right|^2dt\right]\\
&\leq C_{K,T,d,d'}\left(\E\left[\int_0^T\left|\alpha^{n-1}\right|^2dt\right]+\E\left[\int_0^T\left|\alpha^0\right|^2dt\right]\right)+2\E\left[\int_0^T\left|Z^{0}\right|^2dt\right]\leq C_{K,T,d,d',\alpha^0}.
\end{align*}
The term $\E\left[\int_0^T\left|X_t^{n-1}\right|^2dt\right]$ is uniformly bounded due to Lemma \ref{prori estimate 2} and Lemma \ref{lemma bsde stability}.
Therefore
\begin{align*}
\sup_{1\leq n\leq N_\tau}\E\left[\int_0^T\left|D_a\mathcal{H}\left(t,X_t^{n-1},Y_t^{n-1},Z_t^{n-1},\alpha_t^n\right)\right|^2dt\right]\leq C_{T,K,d,d',\alpha^0,C_{\mathbb{H}^\infty,BMO}}.
\end{align*}
\end{proof}

\begin{proof}[Proof of Theorem~\ref{thm:convergence_rate}]
Let $\hat{X}^\tau_{s}$ and $\left(\hat{Y}^\tau_s,\hat{Z}^\tau_s\right)$ be the 
solutions to the forward SDE \eqref{controlled_SDE} and backward SDE \eqref{adjoint equation} corresponding to the control $\hat{\alpha}^\tau_s$. 
From \eqref{discrete_derivative_int} for each $(t,\omega)$, we get
\begin{equation}
\begin{split}
\label{start}
|\hat{\alpha}^\tau_{s} & -\alpha_{s}|^2
= \int_0^s\tfrac{d}{du}\left(\left|\hat{\alpha}^\tau_{u}-\alpha_{u}\right|^2\right)du
=2\int_0^s\left(\hat{\alpha}^\tau_{u}-\alpha_{u}\right)\cdot\left(\tfrac{d}{du}\hat{\alpha}^\tau_u-\tfrac{d}{du}\alpha_{u}\right)du\\
& = 2\int_0^s\left(\hat{\alpha}^\tau_u-\alpha_u\right)\cdot
\left(D_a\mathcal{H}(X_{u},Y_{u},Z_{u},\alpha_u)-D_aH(X^{\tau,-}_{u},Y^{\tau,-}_{u},Z^{\tau,-}_u,\alpha^{\tau,+}_{u})\right)du\\
\end{split}
\end{equation}
Hence
\[
\begin{split}
|\hat{\alpha}^\tau_{s} & - \alpha_{s}|^2  = 2\int_0^s\left(\hat{\alpha}^\tau_u-\alpha_u\right)\cdot\left(D_a\mathcal{H}(X_{u},Y_{u},Z_{u},\alpha_u)
-D_a\mathcal{H}\left(X_u,Y_u,Z_u,\hat{\alpha}^\tau_u\right)
\right)\\
&+2\int_0^s\left(\hat{\alpha}^\tau_u-\alpha_u\right)
\cdot
\left(D_a\mathcal{H}\left(X_u,Y_u,Z_u,\hat{\alpha}^\tau_u\right)-D_a\mathcal{H}(\hat{X}_{u}^\tau,\hat{Y}_{u}^\tau,\hat{Z}_{u}^\tau,\hat{\alpha}_u^\tau)\right)du\nonumber\\
&+2\int_0^s\left(
\hat{\alpha}^\tau_u-\alpha_u\right)\cdot\left(D_a\mathcal{H}(\hat{X}_{u}^\tau,\hat{Y}_{u}^\tau,\hat{Z}_{u}^\tau,\hat{\alpha}_u^\tau)-D_aH(X^{\tau,-}_{u},Y^{\tau,-}_{u},Z^{\tau,-}_u,\alpha^{\tau,+}_{u})
\right)du
\end{split}
\]
From Young's inequality and Lemma~\ref{lem:D_aH convex} we get
\begin{align*}
\left|\hat{\alpha}^\tau_{s}-\alpha_{s}\right|^2\leq&\left(3+\lambda\right)\int_0^s\left|\hat{\alpha}^\tau_u-\alpha_u\right|^2du\\
&+\int_0^s\left|D_a\mathcal{H}(X_{u},Y_{u},Z_{u},\alpha_u)
-D_a\mathcal{H}\left(X_u,Y_u,Z_u,\hat{\alpha}^\tau_u\right)\right|^2du\nonumber\\
&+
\int_0^s\left|D_a\mathcal{H}(\hat{X}_{u}^\tau,\hat{Y}_{u}^\tau,\hat{Z}_{u}^\tau,\hat{\alpha}_u^\tau)-D_aH(X^{\tau,-}_{u},Y^{\tau,-}_{u},Z^{\tau,-}_u,\alpha^{\tau,+}_{u})\right|^2\nonumber,
\end{align*} 
Taking expectations, integrating from $0$ to $T$ and applying Fubini's Theorem gives
\begin{equation}
\label{eqn:helper}
\begin{split}
\E&\left[\int_0^T\left|\hat{\alpha}^\tau_s-\alpha_s\right|^2dt\right]\leq \left(3+\lambda\right)\int_0^s\E\left[\int_0^T\left|\hat{\alpha}^\tau_u-\alpha_u\right|^2dt\right]du+\int_0^s\left[I^{(1)}_u+I^{(2)}_u\right]du
\end{split}
\end{equation}
where
\begin{align*}
I^{(1)}_u:=&\E\left[\int_0^T\left|D_aH(X^{\tau,-}_{u},Y^{\tau,-}_{u},Z^{\tau,-}_u,\alpha^{\tau,+}_{u})-D_a\mathcal{H}(\hat{X}_{u}^\tau,\hat{Y}_{u}^\tau,\hat{Z}_{u}^\tau,\hat{\alpha}_u^\tau)\right|^2dt\right]\\
\leq&C\E\left[\int_0^T\left|\alpha^{\tau,+}_u-\alpha^{\tau,-}_u\right|^2dt\right]+C\E\left[\int_0^T\left|\alpha^{\tau,-}_u-\hat{\alpha}^\tau_u\right|^2dt\right]
\end{align*}
and
\begin{align*}
I^{(2)}_u:=&\E\left[\int_0^T\left|D_a\mathcal{H}(X_{u},Y_{u},Z_{u},\alpha_u)
-D_a\mathcal{H}\left(X_u,Y_u,Z_u,\hat{\alpha}^\tau_u\right)\right|^2du\right]\\
\leq& C\E\left[\int_0^T\left|\hat{\alpha}^\tau_{u}-\alpha_u\right|^2dt\right]
\end{align*}
and where $C$ is the constant from Lemma \ref{3_diff_controls} and depends on $K$, $T$ and $C_{\mathbb{H}^\infty,BMO}$. 
From this and \eqref{eqn:helper} we have 
\begin{align}
\label{eqn:helper_2}
\E\left[\int_0^T\left|\hat{\alpha}^\tau_s-\alpha_s\right|^2dt\right]\leq(3+\lambda+C)\int_0^s\E\left[\int_0^T\left|\hat{\alpha}^\tau_u-\alpha_u\right|^2dt\right]+\int_0^sI^{(1)}_udu. 
\end{align}
In order to bound $I^{(1)}_u$ we note that due to the definition of the interpolations \eqref{interp} for $s\in((n-1)\tau,n\tau]$, $\alpha^{\tau,+}_{s}=\alpha^n=\hat{\alpha}^\tau_{n\tau}$ and $\alpha^{\tau,-}_{s}=\alpha^{n-1}=\hat{\alpha}^\tau_{(n-1)\tau}$.
This lets us write
\begin{align}
\E\left[\int_0^T\left|\alpha^{\tau,+}_{s}-\alpha^{\tau,-}_{s}\right|^2dt\right] 
&=\E\left[\int_0^T\tau^2 \left| D_a\mathcal{H}\left(X^{n-1},Y^{n-1},Z^{n-1},\alpha^n \right) \right|^2 dt\right]\label{eqn:helper_3}\,,\\
\E\left[\int_0^T\left|\hat{\alpha}^{\tau}_{s}-\alpha^{\tau,-}_{s}\right|^2dt\right]
& \leq \E\left[\int_0^T \tau^2 \left|D_a\mathcal{H}\left(X^{n-1},Y^{n-1},Z^{n-1},\alpha^n\right)\right|^2 dt\right]\label{eqn:helper_4}.
\end{align}
Therefore, integrating \eqref{eqn:helper_3} and \eqref{eqn:helper_4}   from $0$ to $s$ we have 
\begin{align*}
&\int_0^S\E\left[\int_0^T\left|\alpha^{\tau,+}_{s}-\alpha^{\tau,-}_{s}\right|^2dt\right]+\E\left[\int_0^T\left|\hat{\alpha}^{\tau}_{s}-\alpha^{\tau,-}_{s}\right|^2dt\right]\,ds\\
&\leq 2\tau^2\sum_{n=1}^{N_\tau}\int_{(n-1)\tau}^{n\tau}\left|D_a\mathcal{H}\left(X^{n-1},Y^{n-1},Z^{n-1},\alpha^n\right)\right|^2\,ds\leq 2SC\tau^2,
\end{align*}
where the final inequality follows from Lemma \ref{lem:D_aH_bound} and $C$ is a constant depending on $T,K,d,d',\alpha^0,C_{\mathbb{H}^\infty,BMO}$. 
Therefore we have shown that $\int_0^sI^{(1)}_udu\leq 2CS\tau^2$.
Substituting this into \eqref{eqn:helper_2} we get that
\begin{align*}
\E\left[\int_0^T\left|\hat{\alpha}^\tau_s-\alpha_s\right|^2dt\right]&\leq\left(3+\lambda+C_{K,T,\alpha^0,C_{\mathbb{H}^\infty,BM0}}\right)\int_0^s\E\left[\int_0^T\left|\hat{\alpha}^\tau_u-\alpha_u\right|^2dt\right]du+2CS\tau^2,
\end{align*}
Gr\"{o}nwall's Lemma and taking supremums in $s\in[0,S]$ gives
\begin{align*}
\sup_{s\in[0,S]}\mathbb{E}\left[\int_0^T\left|\hat{\alpha}^\tau_s-\alpha_s\right|^2dt\right]ds\leq \tau^2 CSe^{S(3+C+\lambda)},
\end{align*}
which shows the required rate of convergence for the sequence $(\hat{\alpha}^\tau)_\tau$. 
The convergence of the sequences $(\alpha^{\tau,+})_\tau$ and $(\alpha^{\tau,-})_\tau$ 
follow from \eqref{eqn:helper_3}, \eqref{eqn:helper_4} and Lemma \ref{lem:D_aH_bound}. 
\end{proof}

\section{Convergence of the Gradient Flow}
\label{Sect: convergence to OC}
The main aim of this section is to prove Theorems~\ref{prop: L^2-convergence of D_aH} and~\ref{thm: Convergence_of_value_fiunction}. 
To do that we will first need to prove the energy identity which is Lemma~\ref{functional_decreases_along_flow}.
Next we prove a bound on the solutions of~\eqref{Gradient_flow_system_intro} which is uniform in $s\in [0,\infty)$ which will allow us to extend the gradient flow to $[0, \infty)$, this is Theorem~\ref{thm: Extending gradient flow}.

\begin{lemma}\label{Gateaux_Derivative}
Under Assumptions \ref{Assumptions_Old} and \ref{ass: one for Gateuax} the map $\alpha\mapsto J\left(\alpha\right)$ is Gateaux differentiable with 
\begin{align*}
\frac{d}{d\delta}J\left(\alpha+\delta(\alpha'-\alpha)\right)|_{\delta=0}=\E\left[\int_{0}^{T}D_{a}\mathcal{H}\left(t,X_{t}^{\alpha},Y_{t}^{\alpha},Z_{t}^{\alpha},\alpha_{t}\right)\cdot\left(\alpha'_{t}-\alpha_{t}\right)dt\right],
\end{align*}
where $\alpha,\alpha'\in\mathbb{H}_{p}^{2}$.
\end{lemma}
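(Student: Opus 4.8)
The plan is to differentiate $J(\alpha + \delta(\alpha'-\alpha))$ in $\delta$ at $\delta = 0$ by passing the derivative through the expectation and the time integral, using the chain rule together with the derivative-of-flow estimates. Write $\alpha^\delta := \alpha + \delta(\alpha'-\alpha)$ and $X^\delta := X^{\alpha^\delta}$, $Y^\delta := Y^{\alpha^\delta}$, $Z^\delta := Z^{\alpha^\delta}$. First I would establish the existence of the derivative process $\partial_\delta X^\delta|_{\delta = 0} =: \xi$, which should solve the linearised SDE
\begin{align*}
d\xi_t = \big(D_x b(t,X_t^\alpha,\alpha_t)\xi_t + D_a b(t,X_t^\alpha,\alpha_t)(\alpha'_t-\alpha_t)\big)dt + \big(D_x\sigma(t,X_t^\alpha,\alpha_t)\xi_t + D_a\sigma(t,X_t^\alpha,\alpha_t)(\alpha'_t-\alpha_t)\big)dW_t\,,
\end{align*}
with $\xi_0 = 0$; this is standard given the boundedness of $D_x b, D_x\sigma, D_a b, D_a\sigma$ from Assumptions~\ref{Assumptions_Old} and~\ref{additional_assumptions}, together with the $L^2$ stability of the forward flow in Lemma~\ref{lemma bsde stability}, which gives $\mathbb E\sup_t|X^\delta_t - X^\alpha_t|^2 \leq c\delta^2\|\alpha'-\alpha\|^2$ and hence the difference quotients converge in $L^2$.

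Next, I would differentiate $J(\alpha^\delta) = \mathbb E\big[\int_0^T f(t,X_t^\delta,\alpha_t^\delta)\,dt + g(X_T^\delta)\big]$ directly. Using the mean value theorem on $f$ and $g$ and the bounds $|D_x f|, |D_x g| \leq K(1+|x|+|a|)$ from Assumption~\ref{ass: one for Gateuax} (which, combined with the moment bound of Theorem~\ref{2nd-moments-controlled-SDEs} and uniform integrability in $\delta$), justifies dominated convergence, one obtains the ``primal'' form
\begin{align*}
\tfrac{d}{d\delta}J(\alpha^\delta)\big|_{\delta=0} = \mathbb E\left[\int_0^T \big(D_x f(t,X_t^\alpha,\alpha_t)\cdot \xi_t + D_a f(t,X_t^\alpha,\alpha_t)\cdot(\alpha'_t-\alpha_t)\big)\,dt + D_x g(X_T^\alpha)\cdot\xi_T\right].
\end{align*}
The remaining task is to rewrite this using the adjoint equation: apply It\^o's formula to $s\mapsto Y_s^\alpha\cdot\xi_s$ on $[0,T]$, use $Y_T^\alpha = D_x g(X_T^\alpha)$ and the dynamics $dY^\alpha = -D_x\mathcal H(t,X^\alpha,Y^\alpha,Z^\alpha,\alpha)\,dt + Z^\alpha\,dW$, and take expectations. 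Since $D_x\mathcal H = D_x b\cdot Y^\alpha + D_x\sigma : Z^\alpha + D_x f$, the terms involving $D_x b\,\xi$ and $D_x\sigma\,\xi$ cancel against the corresponding drift and martingale-bracket contributions from $\xi$, leaving
\begin{align*}
\mathbb E[D_x g(X_T^\alpha)\cdot\xi_T] = \mathbb E\left[\int_0^T \big(D_a b(t,X^\alpha_t,\alpha_t)(\alpha'_t-\alpha_t)\big)\cdot Y^\alpha_t + \big(D_a\sigma(t,X^\alpha_t,\alpha_t)(\alpha'_t-\alpha_t)\big):Z^\alpha_t - D_x f(t,X^\alpha_t,\alpha_t)\cdot\xi_t\,dt\right].
\end{align*}
Substituting this back and recognising $D_a b\cdot Y^\alpha + D_a\sigma : Z^\alpha + D_a f = D_a\mathcal H(t,X^\alpha,Y^\alpha,Z^\alpha,\alpha)$ yields the claimed formula.

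The main obstacle is the integrability bookkeeping needed to justify the exchange of limit and expectation and to make the It\^o cancellation rigorous: the processes $Z^\alpha$ are only square-integrable (not bounded in the obvious norm), $D_x f$ and $D_a f$ grow linearly in $(x,a)$, and $\xi$ only has second moments, so each pairing must be controlled by a Cauchy--Schwarz estimate using Theorem~\ref{2nd-moments-controlled-SDEs}, the BSDE well-posedness in~\eqref{square_int_sol}, and the linearised-SDE bound. The stochastic integral $\int_0^\cdot \xi_s\cdot Z^\alpha_s\,dW_s$ must be shown to be a true martingale (e.g. via Burkholder--Davis--Gundy plus $\mathbb E\sup_t|\xi_t|^2 < \infty$ and $\mathbb E\int_0^T|Z^\alpha_t|^2\,dt < \infty$) so that its expectation vanishes; this is routine but is where all the assumptions are genuinely used.
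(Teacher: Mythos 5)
The paper does not actually prove this lemma: it states that the result is classical and cites \cite[Lemma 4.8]{carmona2016lectures}. Your argument is precisely the standard proof from that reference --- linearise the flow to get $\xi$, write the derivative of $J$ in ``primal'' form via dominated convergence, then apply It\^o's formula to $Y^\alpha\cdot\xi$ so that the $D_xb$ and $D_x\sigma$ terms cancel against $D_x\mathcal H$ and the terminal term $D_xg(X_T^\alpha)\cdot\xi_T$ is converted into the $D_ab$, $D_a\sigma$ contributions --- and it is correct, including the identification of where the martingale property of $\int\xi\cdot Z^\alpha\,dW$ must be verified. The only slip is bookkeeping: the linear-growth bound $|D_xf|+|D_af|\leq K(1+|x|+|a|)$ is what Assumption~\ref{ass: one for Gateuax} provides, while control of $D_xg$ comes from the quadratic growth of $g$ in Assumption~\ref{Assumptions_Old} (or the stronger Assumption~\ref{ass: D-xf and D_x g}), not from Assumption~\ref{ass: one for Gateuax}; this does not affect the argument.
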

Lemma \ref{Gateaux_Derivative} is a classical result in stochastic control literature and gives us a notion of the Gateaux derivative of the cost functional. 
See e.g.~\cite[Lemma 4.8]{carmona2016lectures}.
\begin{proof}[Proof of Lemma \ref{functional_decreases_along_flow}]
For $\varepsilon\in(0,1)$ let $X^{\varepsilon}=X^{\beta_{t}+\varepsilon(\alpha_{t}-\beta_{t})}$, $Y^{\varepsilon}=Y^{\beta_{t}+\varepsilon(\alpha_{t}-\beta_{t})}$ and $Z^{\varepsilon}=Z^{\beta_{t}+\varepsilon(\alpha_{t}-\beta_{t})}$ and let $\alpha,\beta\in L^{2}([0,T]\times\Omega;\mathbb{R}^{p})$. Then from the Fundamental Theorem of Calculus and Lemma \ref{Gateaux_Derivative} we have
\begin{align*}
J(\alpha)-J(\beta)=&\int_{0}^{1}\frac{d}{d\delta}J(\beta+\varepsilon(\alpha-\beta) +\delta(\alpha-\beta)|_{\delta=0}d\varepsilon\\
=&\int_0^{1}\E\left[\int_{0}^{T}D_{a}\mathcal{H}\left(t,X_{t}^{\varepsilon},Y_{t}^{\varepsilon},Z_{t}^{\varepsilon},\beta_{t}+\varepsilon(\alpha_{t}-\beta_{t})\right)\cdot\left(\alpha_{t}-\beta_{t}\right)dt\right]d\varepsilon.
\end{align*} 
In what follows we apply the above observation with $\alpha_{t}=\alpha_{s+h,t}$ and $\beta_{t}=\alpha_{s,t}$ and introduce the following notation 
\begin{align}
X^{\varepsilon,h}_{s,t}:=&X_{t}^{\alpha_{s,t}+\varepsilon\left(\alpha_{s+h,t}-\alpha_{s,t}\right)}\text{ where }X^{\varepsilon,h}_{s,t}\rightarrow X_{s,t}\text{ in $\mathbb{H}^{2}_{d}$ as $h\rightarrow 0$,}\nonumber\\
Y^{\varepsilon,h}_{s,t}:=&Y_{t}^{\alpha_{s,t}+\varepsilon\left(\alpha_{s+h,t}-\alpha_{s,t}\right)}\text{ where }Y^{\varepsilon,h}_{s,t}\rightarrow Y_{s,t}\text{ in $\mathbb{H}^{2}_{d}$ as $h\rightarrow 0$}\\
Z^{\varepsilon,h}_{s,t}:=&Z_{t}^{\alpha_{s,t}+\varepsilon\left(\alpha_{s+h,t}-\alpha_{s,t}\right)}\nonumber\text{ where }Z^{\varepsilon,h}_{s,t}\rightarrow Z_{s,t}\text{ in $\mathbb{H}^{2}_{d\times d'}$ as $h\rightarrow 0$},
\end{align}
where the convergence results follow from Lemma \ref{lemma bsde stability}. 
Using this notation, and applying the above observation we have:
\begin{align*}
&\frac{d}{ds}J(\alpha_{s})=\lim_{h\rightarrow 0}\frac{J(\alpha_{s+h}) - J(\alpha_{s})}{h}\\
=&\lim_{h\rightarrow 0}\frac{1}{h} \!\int_0^{1} \!\!\E\bigg[\int_{0}^{T}\!\! D_{a}\mathcal{H}\left(t,X_{t}^{\varepsilon,h},Y_{t}^{\varepsilon,h},Z_{t}^{\varepsilon,h},\alpha_{s,t}+\varepsilon(\alpha_{s+h,t}-\alpha_{s,t})\right)\cdot \left(\alpha_{s+h,t}-\alpha_{s,t}\right)dt\bigg]d\varepsilon\\
=&\int_0^{1}\lim_{h\rightarrow 0} \E\bigg[\int_{0}^{T}D_{a}\mathcal{H}\left(t,X_{t}^{\varepsilon,h},Y_{t}^{\varepsilon,h},Z_{t}^{\varepsilon,h},\alpha_{s,t}+\varepsilon(\alpha_{s+h,t}-\alpha_{s,t})\right)\cdot \frac{\left(\alpha_{s+h,t}-\alpha_{s,t}\right)}{h}dt\bigg]d\varepsilon,
\end{align*}
where the final line is justified by bounding
\begin{align*}
\E\left[\int_{0}^{T}D_{a}\mathcal{H}\left(t,X_{t}^{\varepsilon,h},Y_{t}^{\varepsilon,h},Z_{t}^{\varepsilon,h},\alpha_{s,t}+\varepsilon(\alpha_{s+h,t}-\alpha_{s,t})\right)\cdot \frac{\left(\alpha_{s+h,t}-\alpha_{s,t}\right)}{h}dt\right]
\end{align*} uniformly in $h$ as follows: applying H{\"o}lder's inequality and the $L^2$ triangle inequality 
\begin{align*}
\E&\left[\int_{0}^{T}|D_{a}\mathcal{H}\left(t,X_{t}^{\varepsilon,h},Y_{t}^{\varepsilon,h},Z_{t}^{\varepsilon,h},\alpha_{s,t}+\varepsilon(\alpha_{s+h,t}-\alpha_{s,t})\right)\cdot\frac{ \left(\alpha_{s+h,t}-\alpha_{s,t}\right)}{h}|dt\right]\\
&\leq \left(I_{1}+I_2+I_3\right)^{\frac{1}{2}}\E\left[\int_{0}^{T}\left|\frac{\alpha_{s+h,t}-\alpha_{s,t}}{h}\right|^{2}dt\right]^{\frac{1}{2}},
\end{align*}
where 
\begin{align*}
I_1:=\E\left[\int_{0}^{T}|D_{a}b\left(t,X_{t}^{\varepsilon,h},\alpha_{s,t}+\varepsilon(\alpha_{s+h,t}-\alpha_{s,t})\right)\cdot Y_{t}^{\varepsilon,h}|^{2}dt\right]\leq TK^2C_{\mathbb H^\infty,\,\text{BMO}}^2,
\end{align*}
and 
\begin{align*}
I_2:=& \E\left[\int_{0}^{T}|D_{a}\sigma\left(t,X_{t}^{\varepsilon,h},\alpha_{s,t}+\varepsilon(\alpha_{s+h,t}-\alpha_{s,t})\right)\cdot Z_{t}^{\varepsilon,h}|^{2}dt\right]\\
\leq& K^2\E\left[\int_0^T\left|Z_{t}^{\varepsilon,h}\right|^2dt\right]\leq K^2\E\left[\int_0^T|Z_{s,t}|^2dt\right]+\delta,
\end{align*}
for some $\delta>0$ and $h$ sufficiently small. The final inequality above follows from the fact $Z_{t}^{\varepsilon,h}\rightarrow Z_{s,t}$ in $\mathbb{H}^2_{d\times d'}$ as $h\rightarrow 0$. 
Moreover
\begin{align*}
I_3:=&\E\left[\int_{0}^{T}|D_{a}f(t,X_{t}^{\varepsilon,h},\alpha_{s,t}+\varepsilon(\alpha_{s+h,t}-\alpha_{s,t}))|^{2}dt\right]\\
\leq& 2K^{2}\E\left[\int_{0}^{T}1+|X_{s,t}^{\varepsilon,h}|^{2}+|\alpha_{s,t}+\varepsilon(\alpha_{s+h,t}-\alpha_{s,t})|^{2}dt\right]
\end{align*}
and so finally 
\begin{align*}
I_3 & = 2K^2T+\E\int_0^T \left[ 2K^2|X_{s,t}^{\varepsilon,h}|^{2} + 4|\alpha_{s,t}|^2 + 4K^2\varepsilon^2|\alpha_{s+h,t}-\alpha_{s,t}|^{2}\right]\,dt\\
& \leq2K^2T+2K^2c(1+|x|^2)+4\E\left[\int_0^T|\alpha_{s,t}|^2dt\right]+4K^2\varepsilon^2h\int_{s}^{s+h}\E\left[\left|\alpha_{r,t}\right|^2\right]dr\\
& \leq 2K^2T+2K^2c(1+|x|^2)+4\E\left[\int_0^T|\alpha_{s,t}|^2dt\right]+4K^2\varepsilon^2h\|\alpha\|_{L^2(0,S;L^2_{\text{prog}}([0,T]\times\Omega;\mathbb{R}^p))},
\end{align*}
for $\delta>0$ and $h$ sufficiently small. 
The penultimate inequality follows from Theorem \ref{controlled_SDE_intro} and Fubini's Theorem.
This justifies the the interchanging of the limit and integral, also we note that
\begin{align*}
\bigg|\mathbb E\bigg[\int_{0}^{T}D_{a}\mathcal{H}&\left(t,X_{t}^{\varepsilon,h},Y_{t}^{\varepsilon,h},Z_{t}^{\varepsilon,h},\alpha_{s,t}+\varepsilon(\alpha_{s+h,t}-\alpha_{s,t})\right)\cdot \frac{\left(\alpha_{s+h,t}-\alpha_{s,t}\right)}{h}\\
&-D_{a}\mathcal{H}\left(t,X_{s,t},Y_{s,t},Z_{s,t},\alpha_{s,t}\right)\cdot\left(\frac{d}{ds}\alpha_{s,t}\right)dt\bigg]\bigg|\rightarrow 0, \enspace\left(h\rightarrow 0\right). 
\end{align*}
Therefore,
\begin{align*}
\tfrac{d}{ds}&J(\alpha_{s})\\
=&\int_0^{1}\lim_{h\rightarrow 0} \E\left[\int_{0}^{T}D_{a}\mathcal{H}\left(t,X_{t}^{\varepsilon,h},Y_{t}^{\varepsilon,h},Z_{t}^{\varepsilon,h},\alpha_{s,t}+\varepsilon(\alpha_{s+h,t}-\alpha_{s,t})\right)\cdot \tfrac{\left(\alpha_{s+h,t}-\alpha_{s,t}\right)}{h}dt\right]d\varepsilon\\
=&\int_0^1\E\left[\int_{0}^{T}D_{a}\mathcal{H}(t,X_{s,t},Y_{s,t},Z_{s,t},\alpha_{s,t})\cdot\left(\tfrac{d}{ds}\alpha_{s,t}\right)\right]d\varepsilon\\
=&-\E\left[\int_{0}^{T}|D_{a}\mathcal{H}(t,X_{s,t},Y_{s,t},Z_{s,t},\alpha_{s,t})|^{2}\right].
\end{align*}
The proof is thus complete.
\end{proof}

The gradient flow system~\eqref{Gradient_flow_system_intro} is defined for $s\in[0,S]$ we now want to show that it exists for $s\in [0,\infty)$ i.e. we want to prove Theorem \ref{thm: Extending gradient flow}.

\begin{proof}[Proof of Theorem \ref{thm: Extending gradient flow}]

From Lemma~\ref{functional_decreases_along_flow} we have that $\infty>J(\alpha^{0})\geq J(\alpha_{s,t})$.
From this and Assumption~\ref{compactness_of_orbits_assumption} we have that
\begin{align*}
J(\alpha^0)\geq&\E\left[\int_{0}^{T}f(t,X_{s,t},\alpha_{s,t})dt+g(X_{s,T})\right]\geq\E\left[\int_{0}^{T}-K + K^{-1}|\alpha_{s,t}|^{2}dt-K\right]\\
=&-K(T+1) + K^{-1}\E\left[\int_{0}^{T}|\alpha_{s,t}|^{2}dt\right].
\end{align*}
From this we see that $\|\alpha_{s}\|_{L^{2}_{\text{prog}}([0,T]\times\Omega;\mathbb{R}^p)}$ is bounded uniformly independently of $s$. 
Hence we can construct the gradient flow system on the interval $[S,2S]$ by starting with $\alpha_S$ as the initial condition $\alpha^0$ in Theorem~\ref{thm:convergence_rate}. 
By induction the solution exists on $[0,\infty)$.
\end{proof}

\begin{proof}[Proof of Theorem~\ref{prop: L^2-convergence of D_aH}]
From Lemma~\ref{functional_decreases_along_flow} we have 
\begin{align*}
J(\alpha_S)-J(\alpha_0)=\int_0^{S}\frac{d}{ds}J(\alpha_s)ds = -\int_0^S \left\|D_a\mathcal{H}(t,X_s,Y_s,Z_s,\alpha_s)\right\|_{\mathbb{H}^2_p}^{2} \,ds \leq 0. 
\end{align*}
Hence 
\[
\int_0^S \left\|D_a\mathcal{H}(t,X_s,Y_s,Z_s,\alpha_s)\right\|_{\mathbb{H}^2_p}^{2} \,ds = J(\alpha^0) - J(\alpha_S)\,.
\]
Taking the limit as $S\to \infty$ we have 
\[
\int_0^\infty \left\|D_a\mathcal{H}(t,X_s,Y_s,Z_s,\alpha_s)\right\|_{\mathbb{H}^2_p}^{2} \,ds = J(\alpha^0) - \lim_{S\to \infty }J(\alpha_S) \leq J(\alpha^0)  - \inf_{\alpha} J(\alpha) < \infty\,.
\]
Then from this and the uniform continuity of $s\mapsto \left\|D_a\mathcal{H}(t,X_s,Y_s,Z_s,\alpha_s)\right\|_{\mathbb{H}^2_p}^{2}$ (which follows from Lemma~\ref{lemma L2 cont of DaH}) we have that 
$
\lim_{s\to \infty}\left\|D_a\mathcal{H}(t,X_s,Y_s,Z_s,\alpha_s)\right\|_{\mathbb{H}^2_p}^{2} = 0
$.
\end{proof}

Before proving convergence of the cost functional along solutions to the gradient flow we will recall the following key step in the proof of Pontryagin Optimality Principle as a sufficient condition.

\begin{lemma}
\label{lemma: convexity of cost}
Let $\beta,\theta\in L^{2}_{\text{prog}}([0,T]\times\Omega;\mathbb{R}^{p})$ be arbitrary admissible controls and Assumption \ref{Assumptions_Old} and \ref{Ass_2nd_derivative_sapce} hold. 
Assume further that for any $(t,y,z)\in[0,T]\times\mathbb{R}^d\times\mathbb{R}^{d\times d'}$ the maps
\begin{align*}
(x,a)\mapsto\mathcal{H}(t,x,y,z,a)\text{ and }x\mapsto g(x)
\end{align*}
are convex.
Then 
\begin{align}\label{eqn:convexity of cost}
J(\beta)-J(\theta) \leq \left( D_a\mathcal{H}\left(X^{\beta},Y^{\beta},Z^{\beta},\beta\right), \beta-\theta\right)\,.
\end{align}
If, additionally, there is $\eta>0$ such that for any $(t,y,z)\in[0,T]\times\mathbb{R}^d\times\mathbb{R}^{d\times d'}$ the map $(x,a)\mapsto\mathcal{H}(t,x,y,z,a) -\frac\eta2|a|^2$ is convex then 
\begin{align*}
J(\beta)-J(\alpha)\leq\left(D_a \mathcal H(X^\beta,Y^\beta,Z^\beta,\beta),\beta-\theta\right) -\eta\|\theta-\beta\|_{L^2_{\text{prog}}( [0,T]\times \Omega;\mathbb R^p)}^2\,.
\end{align*}
\end{lemma}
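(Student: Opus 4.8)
The plan is to mimic the finite-dimensional convexity inequality $F(a)-F(b)\geq D_aF(b)\cdot(a-b)$ but now with $F=J$ and with the Hamiltonian playing the role of the linearisation of the cost. First I would invoke Lemma~\ref{Gateaux_Derivative} together with the Fundamental Theorem of Calculus, exactly as in the proof of Lemma~\ref{functional_decreases_along_flow}, to write
\[
J(\beta)-J(\theta)=\int_0^1 \E\!\left[\int_0^T D_a\mathcal{H}\bigl(t,X_t^{\varepsilon},Y_t^{\varepsilon},Z_t^{\varepsilon},\theta_t+\varepsilon(\beta_t-\theta_t)\bigr)\cdot(\beta_t-\theta_t)\,dt\right]d\varepsilon,
\]
where $X^{\varepsilon},Y^{\varepsilon},Z^{\varepsilon}$ are the state/adjoint processes for the control $\theta+\varepsilon(\beta-\theta)$. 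This rewrites the cost difference as an average of directional derivatives of the cost along the segment from $\theta$ to $\beta$. The target inequality asserts that this average is bounded above by the single directional derivative evaluated at the endpoint $\beta$, i.e. by $(D_a\mathcal{H}(X^{\beta},Y^{\beta},Z^{\beta},\beta),\beta-\theta)$.

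The heart of the argument is the classical duality computation from the proof of Pontryagin sufficiency. Fix $\varepsilon$, write $\alpha^{\varepsilon}:=\theta+\varepsilon(\beta-\theta)$, and compare the cost $J(\beta)$ against the ``reference'' cost built from the adjoint process $Y^{\varepsilon},Z^{\varepsilon}$: using the terminal condition $Y^{\varepsilon}_T=D_xg(X^{\varepsilon}_T)$, convexity of $g$, and It\^o's formula applied to $Y^{\varepsilon}_t\cdot(X^{\beta}_t-X^{\varepsilon}_t)$, one expresses $J(\beta)-J(\alpha^{\varepsilon})$ in terms of the Hamiltonian evaluated along $\alpha^{\varepsilon}$ and $\beta$. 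The convexity of $(x,a)\mapsto\mathcal{H}(t,x,y,z,a)$ then gives
\[
\mathcal{H}(t,X^{\beta}_t,Y^{\varepsilon}_t,Z^{\varepsilon}_t,\beta_t)-\mathcal{H}(t,X^{\varepsilon}_t,Y^{\varepsilon}_t,Z^{\varepsilon}_t,\alpha^{\varepsilon}_t)\geq D_x\mathcal{H}(\cdots)\cdot(X^{\beta}_t-X^{\varepsilon}_t)+D_a\mathcal{H}(\cdots)\cdot(\beta_t-\alpha^{\varepsilon}_t),
\]
where the derivatives are at $(t,X^{\varepsilon}_t,Y^{\varepsilon}_t,Z^{\varepsilon}_t,\alpha^{\varepsilon}_t)$; the $D_x\mathcal{H}$ term cancels against the It\^o drift of the adjoint equation (since $dY^{\varepsilon}=D_x\mathcal{H}\,dt+Z^{\varepsilon}dW$ in this sign convention), and the stochastic integrals have zero expectation thanks to the BMO/integrability estimates of Lemma~\ref{technical_result_for_BSDEs} and Theorem~\ref{2nd-moments-controlled-SDEs}. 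Since $\beta_t-\alpha^{\varepsilon}_t=(1-\varepsilon)(\beta_t-\theta_t)$, this yields $J(\beta)-J(\alpha^{\varepsilon})\geq (1-\varepsilon)(D_a\mathcal{H}(X^{\varepsilon},Y^{\varepsilon},Z^{\varepsilon},\alpha^{\varepsilon}),\beta-\theta)$. Rearranging and combining with the Gateaux/FTC identity above (or more directly: noting $J(\alpha^{\varepsilon})=(1-\varepsilon)J(\theta)+\varepsilon J(\beta)-[\text{error}]$ and tracking the error via convexity of $J$ itself, which is Lemma~\ref{lemma: convexity of cost}'s inequality applied between $\theta,\beta$) gives the bound. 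A cleaner route: since we already have $J$ is convex along segments once $\mathcal H$ and $g$ are convex, the map $\varepsilon\mapsto J(\alpha^{\varepsilon})$ is convex, so its derivative is nondecreasing; hence the average of $\tfrac{d}{d\varepsilon}J(\alpha^{\varepsilon})$ over $[0,1]$ is at most its value at $\varepsilon=1$, which is precisely $(D_a\mathcal{H}(X^{\beta},Y^{\beta},Z^{\beta},\beta),\beta-\theta)$, giving~\eqref{eqn:convexity of cost}.

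For the strongly convex refinement, I would repeat the duality computation but now use the $\eta$-strong convexity of $a\mapsto\mathcal{H}(t,x,y,z,a)$: the first-order convexity inequality for $\mathcal{H}$ picks up an extra term $+\tfrac{\eta}{2}|\beta_t-\alpha^{\varepsilon}_t|^2=\tfrac{\eta}{2}(1-\varepsilon)^2|\beta_t-\theta_t|^2$. Carrying this through and integrating $\int_0^1(1-\varepsilon)^2\,d\varepsilon=\tfrac13$ would give a constant $\tfrac{\eta}{6}$; to obtain the sharp constant $\eta$ as stated one instead argues via the strong convexity of $\varepsilon\mapsto J(\alpha^\varepsilon)$ directly — strong convexity of $\mathcal H$ in $a$ and convexity in $(x,a)$ make $\varepsilon \mapsto J(\alpha^\varepsilon)$ $\eta\|\beta-\theta\|^2$-strongly convex, and for an $\mu$-strongly convex function $\phi$ on $[0,1]$ one has $\phi(0)\geq \phi(1)-\phi'(1)+\tfrac{\mu}{2}$, i.e. here $J(\theta)\geq J(\beta)-(D_a\mathcal H(X^\beta,Y^\beta,Z^\beta,\beta),\beta-\theta)+\tfrac{\eta}{2}\|\theta-\beta\|^2$; absorbing constants (and noting the statement's $\eta$ may be taken as half the modulus) yields the displayed inequality.

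The main obstacle I anticipate is the rigorous justification of the It\^o expansion of $s\mapsto Y^{\varepsilon}_t\cdot(X^{\beta}_t-X^{\varepsilon}_t)$ and the vanishing of the stochastic integral terms: the adjoint process $Y^{\varepsilon}$ is only known to lie in $\mathbb{H}^{\infty}_d$ (bounded) via the BMO estimates of Lemma~\ref{technical_result_for_BSDEs}, while $Z^{\varepsilon}$ lies in $\mathcal{K}$, and $X^{\beta}-X^{\varepsilon}$ has bounded second moments; one must check that the product of $Z^{\varepsilon}$ with the (linearly growing) diffusion $\sigma(t,X^{\beta},\beta)-\sigma(t,X^{\varepsilon},\alpha^{\varepsilon})$ gives a genuine martingale, which is exactly where Fefferman's inequality (Theorem~\ref{fefferman_inq}) and Lemma~\ref{lem:fefferman ext} are needed. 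Everything else is the standard Pontryagin-sufficiency bookkeeping, but keeping the dependence on $\varepsilon$ explicit so as to recover the exact constants in the statement requires a little care.
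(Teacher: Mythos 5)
Your proposal is correct in substance but takes a detour the paper does not. The engine of both arguments is the same Pontryagin-sufficiency duality computation: apply It\^o's product rule to $Y^{\cdot}_t\cdot(X^{\beta}_t-X^{\cdot}_t)$, use convexity of $g$ at the terminal time, cancel the $D_x\mathcal{H}$ terms against the adjoint drift, kill the stochastic integrals, and invoke the first-order convexity inequality for $(x,a)\mapsto\mathcal{H}$. The paper, however, runs this computation \emph{once}, directly between $\beta$ and $\theta$, centring everything at $\beta$ (i.e.\ using $Y^{\beta},Z^{\beta}$, the bound $g(X_T^{\beta})-g(X_T^{\theta})\leq D_xg(X_T^{\beta})\cdot(X_T^{\beta}-X_T^{\theta})$, and the convexity inequality of $\mathcal{H}$ evaluated at $(X^{\beta},\beta)$), which yields \eqref{eqn:convexity of cost} immediately with no interpolation. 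Your $\varepsilon$-averaging via Lemma~\ref{Gateaux_Derivative} is unnecessary: your own route (a) at $\varepsilon=0$, with the roles of $\beta$ and $\theta$ exchanged, \emph{is} the lemma. Two caveats on your wrapper. First, the ``cleaner route'' via convexity of $\varepsilon\mapsto J(\alpha^{\varepsilon})$ is circular as written --- convexity of $J$ along segments is precisely the content of \eqref{eqn:convexity of cost}, and your route (a) as stated only gives the one-sided family $\phi(1)-\phi(\varepsilon)\geq(1-\varepsilon)\phi'(\varepsilon)$, which does not by itself imply monotonicity of $\phi'$; you would need the duality inequality between arbitrary pairs on the segment. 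Second, for the strongly convex case the paper again argues in one step, replacing the pointwise convexity inequality by its strengthened form with the extra $-\eta|a-a'|^2$ term, so no $\int_0^1(1-\varepsilon)^2\,d\varepsilon$ bookkeeping arises; your observation that a factor of $2$ is being absorbed is fair (the paper's own display drops the $\tfrac12$ that the convexity of $\mathcal{H}-\tfrac{\eta}{2}|a|^2$ literally produces), but the averaging machinery is not the way to recover it. Your concern about justifying the vanishing of the stochastic integrals is legitimate and is exactly where the square-integrability of $(Y,Z)$ from Definition~\ref{Solution_to_Adjoint} and the moment bounds on $X$ are used; the paper dispatches this in one sentence.
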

\begin{proof}
Let $\beta$ and $\theta$ be as in the statement of the lemma. Then
\begin{align*}
J(\beta)-J(\theta)=\E\left[\int_0^Tf(t,X_t^{\beta},\beta_t)-f(t,X_t^{\theta},\theta_t)dt\right]+\E\left[g(X_T^{\beta})-g(X_T^{\theta})\right].
\end{align*}
From the convexity of $g$ we have for any $x,y\in\mathbb{R}^d$ that $g(y)-g(x)\leq D_xg(y)\cdot(y-x)$.
Writing $\Delta X_t := X_t^{\beta}-X_t^{\theta}$ and applying Ito's product rule we thus get
\begin{align*}
& \E\left[g(X_T^{\beta})-g(X_T^{\theta})\right]\leq \E\left[D_xg(X_T^{\beta})\cdot \Delta X_T\right]=\E\left[Y_{T}^{\beta}\cdot \Delta X_T\right]\\
& = \E\left[\int_0^TY_t^{\beta}\cdot\left(b(t,X_t^{\beta},\beta_t)-b(t,X_t^{\theta},\theta_t)\right)dt\right] -\E\left[\int_0^T \!\!\Delta X_t\cdot D_x\mathcal{H}(t,X_t^{\beta},Y_t^{\beta},Z_{t}^{\beta},\beta_{t})dt\right]\\
& \qquad \qquad +\E\left[\int_0^TZ_t^{\beta}:\left(\sigma(t,X_t^{\beta},\beta_t)-\sigma(t,X_t^{\theta},\theta_t)\right)dt\right],
\end{align*}
where we have used the properties of the solutions to the adjoint equation and controlled SDE to conclude the stochastic integrals equal zero. Also,
\begin{align*}
\E&\left[\int_0^Tf\left(t,X_t^{\beta},\beta_t\right)-f\left(t,X_t^{\theta},\theta_t\right)dt\right]\\
=&\E\left[\int_0^T\mathcal{H}\left(t,X_{t}^{\beta},Y_{t}^{\beta},Z_{t}^{\beta},\beta_t\right)-\mathcal{H}\left(t,X_{t}^{\theta},Y_{t}^{\beta},Z_{t}^{\beta},\theta_t\right)dt\right]\\
&-\E\left[\int_0^T\left(b(t,X_t^{\beta},\beta_t)-b(t,X_t^{\theta},\theta_t)\right)\cdot Y_t^{\beta}dt\right]\\
&-\E\left[\int_0^TZ_t^{\beta}:\left(\sigma(t,X_t^{\beta},\beta_t)-\sigma(t,X_t^{\theta},\theta_t)\right)dt\right].
\end{align*}
Combining these two results we have 
\begin{equation}\label{eqn:help_help}
\begin{split}
J(\beta)-J(\theta)
=&\E\bigg[\int_0^T\mathcal{H}\left(t,X_{t}^{\beta},Y_{t}^{\beta},Z_{t}^{\beta},\beta_t\right)-\mathcal{H}\left(t,X_{t}^{\theta},Y_{t}^{\beta},Z_{t}^{\beta},\theta_t\right)\\
&+D_x\mathcal{H}(t,X_t^{\beta},Y_t^{\beta},Z_{t}^{\beta},\beta_{t})\cdot\left(X_t^{\theta}-X_t^{\beta}\right)dt\bigg].
\end{split}
\end{equation}
In the setting where the map $(x,a)\mapsto\mathcal{H}(t,x,y,z,a)$ is convex uniformly in all other parameters we have
\begin{align*}
\mathcal{H}(t,x,y,z,a)-\mathcal{H}(t,x',y,z,a')+D_x\mathcal{H}(t,x,y,z,a)\cdot(x'-x)\leq D_a\mathcal{H}(t,x,y,z,a)\cdot(a-a')
\end{align*}
and hence
\begin{align}\label{eqn: helper_for_value_function_convergence}
J(\beta)-J(\theta)\leq\E\left[\int_0^TD_a\mathcal{H}\left(t,X_t^{\beta},Y_t^{\beta},Z_t^{\beta},\beta_t\right)\cdot\left(\beta_t-\theta_t\right)dt\right].
\end{align}
Under the additional convexity assumption we have 
\begin{align*}
\mathcal{H}(t,x,y,z,a)&-\mathcal{H}(t,x',y,z,a')-D_x\mathcal{H}(t,x,y,z,a)\cdot(x'-x)\\
&\leq D_aH(t,x,y,z,a)\cdot(a-a')-\eta|a-a'|^2.
\end{align*}
Therefore substituting the above into \eqref{eqn:help_help} we have  
\begin{align*}
J(\beta)-J(\theta)\leq\E\left[\int_0^TD_aH(X^\beta,Y^\beta,Z^\beta,\beta)\cdot(\beta-\theta) dt\right]-\eta\E\left[\int_0^T|\theta-\beta|^2dt\right].
\end{align*}
The proof is now complete.
\end{proof}
Using this notion of convexity we can now prove the cost functional converges with the rate outlined in Theorem \ref{thm: Convergence_of_value_fiunction}.
\begin{proof}[Proof of Theorem \ref{thm: Convergence_of_value_fiunction}]
We first prove the linear rate of convergence. 
In this proof let $\|\cdot\|$ denote the norm in $L^2_{\text{prog}}\left([0,T]\times\Omega;\mathbb{R}^p\right)$.
From~\eqref{Gradient_flow_system_intro} we have
\begin{equation}\label{helper_1_1}
\begin{split}
\tfrac{d}{ds}\|\alpha_{s}-\alpha^{*}\|^2=&-2\left(\alpha_{s}-\alpha^*, D_a\mathcal{H}\left(t,X_{s},Y_{s},Z_{s},\alpha_{s}\right)\right)
\leq 2\left(J(\alpha^*)-J(\alpha_{s})\right),
\end{split}
\end{equation}
where the final inequality follows from Lemma \ref{lemma: convexity of cost} applied to the controls $\alpha_{s}$ and $\alpha^*$. Now integrating this expression over the interval $[0,S]$ for some $S\in[0,\infty)$ gives
\begin{align*}
\|\alpha_{S}-\alpha^{*}\|^2-\|\alpha_{0}-\alpha^{*}\|^2\leq 2SJ(\alpha^*)-2\int_0^SJ(\alpha_s)\,ds.
\end{align*}
Therefore, 
\begin{align*}
2\int_0^SJ(\alpha_s)ds-2SJ(\alpha^*)
\leq \|\alpha_{0}-\alpha^{*}\|^2 - \|\alpha_{S}-\alpha^{*}\|^2.
\end{align*}
From Lemma \ref{functional_decreases_along_flow} we know that $J(\alpha_s)$ is decreasing allowing us to write 
\begin{align*}
2S\left(J(\alpha_{S})-J(\alpha^*)\right)
\leq \|\alpha_{0}-\alpha^{*}\|^2,
\end{align*}
giving us the required rate of convergence. 

Under the additional strong convexity assumption the same calculation as in \eqref{helper_1_1} gives
\begin{align*}
\tfrac{d}{ds}\left\|\alpha_s-\alpha^*\right\|^2\leq J(\alpha^*)-J(\alpha_s)-\eta\left\|\alpha_s-\alpha^*\right\|^2.
\end{align*}
Hence
\begin{align*}
0\leq e^{\eta s}J(\alpha_s)-e^{\eta s}J(\alpha^*)\leq-\tfrac{d}{ds}\left(e^{\eta s}\left\|\alpha_s-\alpha^*\right\|^2\right).
\end{align*}
Now integrating from $0$ to $S$ and using Lemma \ref{functional_decreases_along_flow}
\begin{equation}
\label{eqn:conv_to_OC}
\begin{split}
\left(J(\alpha_S)-J(\alpha^*)\right)\int_0^Se^{\eta s}ds
& \leq\int_0^Se^{\eta s}(J(\alpha_s)-J(\alpha^*))ds\\
&\leq\left\|\alpha_0-\alpha^*\right\|^2 - e^{\eta S}\left\|\alpha_S-\alpha^*\right\|^2.    
\end{split}
\end{equation}
Therefore 
\begin{align*}
J(\alpha_S)-J(\alpha^*)\leq\frac{\eta}{e^{\eta S}-1}\left\|\alpha_0-\alpha^*\right\|^2\leq \eta e^{-\eta S}\left\|\alpha_0-\alpha^*\right\|^2.
\end{align*}
Finally, from~\eqref{eqn:conv_to_OC}, we have
\begin{align*}
e^{\eta S}\left\|\alpha_S-\alpha^*\right\|^2\leq& -\left(J(\alpha_S)-J(\alpha^*)\right)\int_0^Se^{\eta s}ds+\left\|\alpha_0-\alpha^*\right\|^2\leq\left\|\alpha_0-\alpha^*\right\|^2.
\end{align*}
From this the conclusion follows.
\end{proof}
\section{Examples}
\label{sec examples}

In this short section we give a few examples of control problems that fit the assumptions of the paper.

\begin{example}[Modified Linear-Quadratic]
\label{convex example}
The linear quadratic control problem doesn't exactly fit in the framework of this paper because we cannot allow quadratic growth in $x$ of the cost functions outside of a compact set. 
Our modification reads as follows (we use the same notation as in \cite{carmona2016lectures} Chapter 4.3.1): let $A_{t},B_{t},\beta_t,C_{t},D_{t}$ and $\gamma_t$ be bounded by $K$ deterministic functions of time which take values in appropriate matrix/vector valued spaces $\mathbb{R}^{n}$, so that $b:[0,T]\times\mathbb{R}^{d}\times \mathbb{R}^{p}\rightarrow\mathbb{R}^{d}$ and  $\sigma:[0,T]\times\mathbb{R}^{d}\times \mathbb{R}^{p}\rightarrow\mathbb{R}^{d\times d'}$. 
We define $b$ and $\sigma$ as follows:
\begin{align*}
b(t,x,a)=A_t x + B_t a + \beta_t,\enspace \sigma(t,x,a)=C_t x+ D_t a + \gamma_t,
\end{align*}
We modify the running and terminal reward as follows:
\begin{align*}
f(t,x,a) = \begin{cases}
\frac{1}{2}L_t|x|^2 + \frac{1}{2}a^{T}M_ta & \text{ if }|x|\leq 1, \\
\frac{1}{2}L_t|x| + \frac{1}{2}a^{T}M_ta & \text{ if }|x| > 1,
\end{cases}\enspace g(x)=\begin{cases}
\frac{1}{2}N|x|^{2} & \, \text{if}\, \, \,|x|\leq 1, \\ \frac{1}{2}N |x|& \, \text{if}\, \, \, |x| > 1,
\end{cases}
\end{align*}
where we assume that $L_t$ and $M_t$ are deterministic with $L_t$ valued in $\mathbb R$, with $M_t$ valued in $\mathbb R^{p\times p}$ such that $0 \leq L_t \leq K$, $K^{-1} \leq M_t \leq K$ and where $N\in \mathbb R$ such that $N\geq 0$.

We check each of the assumptions hold: we start with Assumptions \ref{Assumptions_Old} and \ref{ass: D-xf and D_x g} we have 
\begin{align*}
|D_{x}b(t,x,a)|=|A_t|\leq K,\enspace |D_{x}\sigma(t,x,a)|=|C_t|\leq K,\enspace |D_{x}f(t,x,a)|=\begin{cases}
|L_t x| & \text{ if }|x|\leq 1 \\ 
|L_t| & \text{ if }|x|>1
\end{cases}
\end{align*}
so that $|D_{x}f(t,x,a)|\leq K$ and moreover
$|b(t,x,a)|+|\sigma(t,x,a)|\leq K(1+|x|+|a|)$ and clearly $f$ satisfies $|f(t,x,a)|\leq K(1+|x|^{2}+|a|^{2})$ and $|g(x)|\leq K(1+|x|^2)$. 
Also $|D_xg(x)|\leq K$. 
For Assumption \ref{Ass_2nd_derivative_sapce} we have $|D_{x}^{2}b|=|D_{x}^{2}\sigma|=0$, $|D_{x}^{2}f|\leq|L_t|<\infty$ and $|D_xg(x)|\leq|N_t|\leq K$. 
For Assumption \ref{additional_assumptions} we have $|D_{x}D_{a}b(t,x,a)|=|D_{x}D_{a}\sigma(t,x,a)|=|D_{x}D_{a}f(t,x,a)|=0$ and
\begin{align*}
|D_{a}b(t,x,a)|=|B_t|\leq K,\enspace |D_{a}\sigma(t,x,a)|=|D_t|\leq K.
\end{align*}
For Assumption \ref{assumption_for_uncontrolled_vol} we have 
\begin{align*}
|D^2_a b (t,x,a)|=|D^{2}_{a}\sigma(t,x,a)|=0,\enspace |D^{2}_{a}f(t,x,a)|=|D_{a}\left[M_t a\right]|=|M_t|\leq K.
\end{align*}
To check Assumption \ref{new lambda convexity} we note that the Hamiltonian is $\mathcal{H}(t,x,y,z,a)$
\begin{align*}
=\begin{cases}
(A_t x + B_t a + \beta_t)\cdot y + (C_t x+ D_t a + \gamma_t)\cdot z + \frac{1}{2}L_t|x|^2 + \frac{1}{2}a^{T}M_ta & \text {if } |x|\leq 1 \\
(A_t x + B_t a + \beta_t)\cdot y + (C_t x+ D_t a + \gamma_t)\cdot z + \frac{1}{2}L_t|x| + \frac{1}{2}a^{T}M_ta & \text{ if }|x|\geq 1
\end{cases}
\end{align*}
which satisfies the required convexity  with $\lambda = 0$. 
Finally for Assumptions \ref{ass: one for Gateuax} and  \ref{compactness_of_orbits_assumption} we note that
\begin{align*}
f(t,x,a)\geq \frac{1}{2}|M_{t}||a^{2}|\geq  K^{-1}|a|^{2},\enspace|D_{a}f(t,x,a)|\leq |M_t||a|\leq K(1+|x|+|a|).
\end{align*}
and $g(x)\geq 0$. 
\end{example}

\begin{example}[$\lambda$-convex Hamiltonian)]
\label{non-convex examples}
This is similar to Example~\ref{convex example} but with a different running reward and restrict ourselves to the one dimensional case. 
Let $f(t,x,a)$
\begin{align*}
=\begin{cases}
\frac{1}{2}L_t x^2 + \frac{1}{2}M_t\left( a^4 - a^2\right)1_{\{|a|\leq 1\}}+(a-1)^{2}1_{\{a>1\}}+(a+1)^{2}1_{\{a<-1\}} & \text{ if } |x|\leq 1\\
\frac{1}{2}L_t |x| + \frac{1}{2}M_t\left( a^4 - a^2\right)1_{\{|a|\leq 1\}}+(a-1)^{2}1_{\{a>1\}}+(a+1)^{2}1_{\{a<-1\}} & \text{ if } |x|> 1
\end{cases},
\end{align*}
and so we only need to check the assumptions for $f$, $D_{a}f$ since the others are the same as in Example \ref{convex example}. 
To that end note that 
\begin{align*}
D_{a}f(t,x,a)=\begin{cases}
\frac{1}{2}M_t (4a^3 - 2a) & \text{ if }|a|\leq 1 \\
2(a-1)&\text{ if }a>1\\
2(a+1)&\text{ if }a<-1
\end{cases}
\end{align*}
and
\begin{align*}
D_{a}^2f(t,x,a)=\begin{cases}
\frac{1}{2}M_t (12a^2 - 2)& \text{if }|a|\leq 1\\
2&\text{otherwise.}
\end{cases}
\end{align*}
Hence $|D_a f|$ and $|D_{a}^{2}f|$ satisfies the required boundedness conditions. Also $|D_{a}D_{x}f|=0$, and the lower growth condition in Assumption \ref{compactness_of_orbits_assumption} holds since $f(t,x,a)\geq - K + K^{-1}|a|^{2}$. The Hamiltonian is given by 
\begin{align*}
\mathcal{H}(t,x,y,z,a)=&(A_t x + B_t a + \beta_t)\cdot y + (C_t x+ D_t a + \gamma_t)\cdot z\\ 
&+ \frac{1}{2}L_t x^2 + \frac{1}{2}M_t\left( a^4 - a^2\right)1_{\{|a|\leq 1\}}+(a-1)^{2}1_{\{a>1\}}+(a+1)^{2}1_{\{a<-1\}}
\end{align*}
for $|x|\leq 1$ and 
\begin{align*}
\mathcal{H}(t,x,y,z,a)=&(A_t x + B_t a + \beta_t)\cdot y + (C_t x+ D_t a + \gamma_t)\cdot z\\ 
&+ \frac{1}{2}L_t |x| + \frac{1}{2}M_t\left( a^4 - a^2\right)1_{\{|a|\leq 1\}}+(a-1)^{2}1_{\{a>1\}}+(a+1)^{2}1_{\{a<-1\}}
\end{align*}
otherwise. Note that the required convexity breaks for $|a|\leq 1$, however if we add $\frac{\lambda}{2}|a|^{2}$ to the Hamiltonian we have strict convexity provide $\lambda>2$, and so Assumption \ref{new lambda convexity} is satisfied. The gradient flow reads as follows
\begin{align*}
\frac{d}{ds}\alpha_{s,t}=&- B_{t}Y_{s,t} - D_{t}Z_{s,t} - \frac{1}{2}M_t\left(4\alpha_{s,t}^3 - 2\alpha_{s,t}\right)1_{\{|\alpha_{s,t}|\leq 1\}}\\
&-2(\alpha_{s,t}-1)1_{\{\alpha_{s,t}>1\}}-2(\alpha_{s,t}+1)1_{\{\alpha_{s,t}<-1\}}
\end{align*}
with the initial condition $ \alpha_{0,t}=\alpha^{0}_{t}$ and where 
\begin{align*}
\begin{cases}
dX_{s,t}=&\left(A_t X_{s,t} + B_t \alpha_{s,t} + \beta_t\right) dt + \left(C_t X_{s,t}+ D_t \alpha_{s,t} + \gamma_t\right) dW_{t}, \\
dY_{s,t}=&-\biggl(A_t Y_{s,t} + C_t Z_{s,t} + L_t X_{s,t} 1_{\{|X_{s,t}|\leq 1\}}+\frac{1}{2}L_t\frac{X_{s,t}}{|X_{s,t}|}1_{\{|X_{s,t}|>1\}}\biggr)dt+Z_{s,t}dW_{t},\\ X_{0,t}=x,& Y_{s,T}=D_xg(X_{s,T}).
\end{cases}
\end{align*}
\end{example}

\begin{example}[Non-linear drift and diffusion]
\label{ex non-linear drift}
In Examples~\ref{convex example} and~\ref{non-convex examples} we have shown that our assumptions are satisfied with linear drift and diffusion structure (with costs being convex or $\lambda$-convex respectively). 
Here we show that the setting of this paper also covers non-linear drifts and diffusion coefficients, as long as the growth is controlled. 
Let
\begin{align*}
b(t,x,a)=L(x)A_tL(a),\enspace\sigma(t,x,a)=ax+ba+c\text{ where } L(x)=\frac{1}{1+e^{-x}},
\end{align*}
and $A_t$ is real-valued, deterministic satisfies $|A_t|\leq K$ and $a,b,c\in\mathbb{R}$.
Then we see that 
\begin{align*}
|D_xb(t,x,a)|\leq|A_t|\frac{e^{-x}}{(1+e^{-x})^{2}}\leq\frac{K}{1+e^{-x}}\leq K,
\end{align*}
and 
\begin{align*}
|D^{2}_{x}b(t,x,a)|&=|A_{t}|\left(\frac{2e^{-2x}}{(1+e^{-x})^{3}}-\frac{e^{-x}}{(1+e^{-x})^{2}}\right)\leq|A_t|\left(\frac{2e^{-2x}}{(1+e^{-x})^2} + \frac{e^{-x}}{1+e^{-x}}\right)\\
&\leq |A_t|\left(\frac{2}{e^{-2x}(1+2e^{-x}+e^{-2x})}+1\right)\leq 3K,
\end{align*}
also, 
\begin{align*}
|D_{xa}^{2}b(t,x,a)|\leq\left(\frac{e^{-x}}{(1+e^{-x})^2}\right)K\left(\frac{e^{-a}}{(1+e^{-a})^2}\right).
\end{align*}
Therefore Assumptions~\ref{Assumptions_Old}, \ref{Ass_2nd_derivative_sapce}, \ref{additional_assumptions}, ~\ref{ass: D-xf and D_x g}, \ref{assumption_for_uncontrolled_vol}, \ref{compactness_of_orbits_assumption} are satisfied as long as the cost functions are reasonable.
It remains to check Assumption ~\ref{new lambda convexity}. 
We will assume costs come from Example~\ref{convex example}.
\begin{align*}
\mathcal{H}(t,x,y,z,a)=\begin{cases}
L(x)A_tL(a)y + \sigma(t,x,a)z + \frac{1}{2}L_t|x|^2 + \frac{1}{2}a^{T}M_ta & |x|\leq 1\\ L(x)A_tL(a)y +  \sigma(t,x,a)z  + \frac{1}{2}L_t|x| + \frac{1}{2}a^{T}M_ta & \text{ if }|x|\geq 1\,,
\end{cases}
\end{align*}
which means that the map $a\mapsto\mathcal{H}(t,x,y,z,a)+\frac{\lambda}{2}|a|^{2}$ is convex.
\end{example}

\appendix

\section{Proofs of Technical Results}
\label{sec appendix}
\subsection{Proof of Lemma \ref{Well-defined-gradien-proposition}}\label{Well-Posedness-of-gradient-condition-proof}
\begin{proof}
From Assumption \ref{new lambda convexity} we have a $\lambda\geq 0$ such that for any $(t,x,y,z)$ the map 
\begin{align}\label{temp_1}
a\mapsto\mathcal{H}(t,x,y,z,a)+\frac{\lambda}{2}|a|^{2},
\end{align} 
is convex.
Note that for any $a, a', b\in \mathbb{R}^{p}$ we have
\begin{align*}
\mathcal{H}&(t,x,y,z,a)+\frac{\lambda}{2}|a-a'|^{2}=\left(\mathcal{H}(t,x,y,z,a)+\frac{\lambda}{2}|a|^{2}\right)-\frac{\lambda}{2}2a\cdot a'+\frac{\lambda}{2}|a'|^{2}\\
\geq&\left(\mathcal{H}(t,x,y,z,b)+\frac{\lambda}{2}|b|^{2}+\left[D_{a}\mathcal{H}(t,x,y,z,b)+\frac{\lambda}{2} 2b\right]\cdot(a-b) \right)-\frac{\lambda}{2}2a\cdot a'+\frac{\lambda}{2}|a'|^{2}\\
=&\mathcal{H}(t,x,y,z,b)+\frac{\lambda}{2}\left(|b|^{2}-2b\cdot a' + |a'|^{2}\right)+\left[D_{a}\mathcal{H}(t,x,y,z,b)+\frac{\lambda}{2} 2b\right]\cdot(a-b)+\frac{\lambda}{2}2a'\cdot (b-a)\\
=&\mathcal{H}(t,x,y,z,b)+\frac{\lambda}{2}|b-a'|^{2}+\left[D_{a}\mathcal{H}(t,x,y,z,b)+\lambda(b-a')\right]\cdot(a-b)\\
=&\left[\mathcal{H}(t,x,y,z,b)+\frac{\lambda}{2}|b-a'|^{2}\right]+D_{a}\left[\mathcal{H}(t,x,y,z,b)+\frac{\lambda}{2}|b-a'|^{2}\right]\cdot(a-b)
\end{align*}
which shows the desired convexity of the map $a\mapsto \mathcal{H}(t,x,y,z,a)+\frac{\lambda}{2}|a-a'|^{2}$ for arbitrary $a'\in \mathbb R^p$.
\end{proof}

\subsection{Proof of Lemma \ref{technical_result_for_BSDEs}}
\label{proof: technical_result_for_BSDEs}
\begin{remark}\label{Proving BMO}
Note that if we prove $Z\in\mathcal{K}(\mathbb{R}^{d\times d'})$, then it immediately follows that $\int_0^{T}Z^{i,j}dW_{j}=:Z^{i,j}\circ W_{j}\in BMO$ for all $i\in\{1,\dots,d\}$ and $j\in\{1,\dots,d'\}$. To see this, simply note that
\begin{align*}
\|(Z^{i,j}\circ W^{j})\|_{\text{BMO}}=&\sup_{\tau}\left\|\E\left[\langle Z^{i,j}\circ W^{j}\rangle_{T}-\langle Z^{i,j}\circ W^{j}\rangle_{\tau}|\mathcal{F}_{\tau}\right]\right\|_{\infty}=\sup_{\tau}\left\|\E\left[\int_{\tau}^{T}|Z_{t}^{i,j}|^{2}dt\bigg|\mathcal{F}_{\tau}\right]\right\|_{\infty}\\
\leq&\sup_{\tau}\left\|\E\left[\int_{\tau}^{T}|Z_{t}|^{2}dt\bigg|\mathcal{F}_{\tau}\right]\right\|_{\infty}=\|Z\|_{\mathcal{K}}<\infty.
\end{align*}
Therefore, in order to prove Lemma \ref{technical_result_for_BSDEs}, it is sufficient to prove that $\sup_{\alpha}\|Y^{\alpha}\|_{\mathbb{H}^{\infty}}<\infty$ and $\sup_{\alpha}\|Z^{\alpha}\|_{\mathcal{K}}<\infty$ only. 
\end{remark}
\begin{proof}
The bound $\sup_{\alpha}\|Y^{\alpha}\|_{\mathbb{H}^{\infty}}<\infty$ is proven in \cite{kerimkulov2021modified}, which in turn recognizes the adjoint equation as a linear BSDE then invokes Proposition 3.2 in \cite{harter2019stability}.

In order to get the BMO bound for the process $Z^{\alpha}$ we follow a similar strategy as in \cite{ji2021modified} where a similar bound for a different class of stochastic control problems. Applying It\^{o}'s Lemma to $|Y_{t}^{\alpha}|^{2}$, gives
\begin{align*}
d(|Y_{t}^{\alpha}|^{2})=&2(Y_{t}^{\alpha})^{\top}dY_{t}^{\alpha}+dY_{t}^{\alpha}dY_{t}^{\alpha}\\
=&-2(Y_{t}^{\alpha})^{\top}D_{x}\mathcal{H}(t,X_{t}^{\alpha},Y_{t}^{\alpha},Z_{t}^{\alpha},\alpha_{t})dt + 2(Y_{t}^{\alpha})^{\top}Z_{t}^{\alpha}dW_{t}+\tr((Z^{\alpha}_{t})^{\top}Z^{\alpha}_{t})dt\\
=&\left(|Z_{t}^{\alpha}|^{2}-2(Y_{t}^{\alpha})^{\top}D_{x}\mathcal{H}(t,X_{t}^{\alpha},Y_{t}^{\alpha},Z_{t}^{\alpha},\alpha_{t})\right)dt+(Y_{t}^{\alpha})^{\top}Z_{t}^{\alpha}dW_{t}.
\end{align*}
So in integral form we have 
\begin{align*}
|Y_{t}^{\alpha}|^{2}=|D_{x}g(X_{T}^{\alpha})|^{2}+2\int_{t}^{T}(Y_{s}^{\alpha})^{\top}D_{x}\mathcal{H}(s,X_{s}^{\alpha},Y_{s}^{\alpha},Z_{s}^{\alpha},\alpha_{s})ds-\int_{t}^{T}|Z_{s}^{\alpha}|^{2}ds-\int_{t}^{T}(Y_{t}^{\alpha})^{\top}Z_{t}^{\alpha}dW_{s}.
\end{align*}
Then rearranging and using Assumption \ref{Assumptions_Old} we have
\begin{align*}
\int_{\tau}^{T}|Z_{s}^{\alpha}|^{2}ds\leq& K^{2}+\int_{\tau}^{T}2(Y_{s}^{\alpha})^{\top}D_{x}\mathcal{H}(s,X_{s}^{\alpha},Y_{s}^{\alpha},Z_{s}^{\alpha},\alpha_{s})ds-\int_{\tau}^{T}(Y_{s}^{\alpha})^{\top}Z_{s}^{\alpha}dW_{s}\\
\leq&K^{2}+2\|Y^{\alpha}\|_{\mathbb{H}^{\infty}}\int_{\tau}^{T}|D_{x}b(s,X_{s}^{\alpha},\alpha_{s})\cdot Y_{s}^{\alpha}|+|D_{x}\sigma(s,X_{s}^{\alpha},\alpha_{s}):Z_{s}^{\alpha}| + |D_{x}f(s,X_{s}^{\alpha},\alpha_{s})|ds\\
&-\int_{\tau}^{T}(Y_{s}^{\alpha})^{\top}Z_{s}^{\alpha}dW_{s},
\end{align*}
where $\tau\leq T$ is a stopping time. If we now take conditional expectations $\E\left[\cdot\enspace\bigg|\mathcal{F}_{\tau}\right]$, and note that
\begin{align*}
\E\left[\int_{\tau}^{T}\left|\left(Y_{s}^{\alpha}\right)^{\top}Z_{s}^{\alpha}\right|^{2}ds\bigg|\right]\leq C_{\mathbb{H}^{\infty},\text{BMO}}^{2}\E\left[\int_{0}^{T}\left|Z_{s}^{\alpha}\right|^{2}ds\right]<\infty,
\end{align*}
since the pair $\left(Y^{\alpha},Z^{\alpha}\right)$ is a solution to the adjoint equation meaning the stochastic integral is a martingale and equals $0$ upon taking conditional expectations. 
It then follows that, 
\begin{align*}
\E&\left[\int_{\tau}^{T}|Z_{s}^{\alpha}|^{2}ds\bigg|\mathcal{F}_{\tau}\right]\leq C_{C_{\mathbb H^\infty,\,\text{BMO}},T,K}+2C_{\mathbb H^\infty,\,\text{BMO}}\E\left[\int_{\tau}^{T}|D_{x}\sigma(s,X_{s}^{\alpha},\alpha_{s}): Z_{s}^{\alpha}|ds\bigg|\mathcal{F}_{\tau}\right]\\
\leq&C_{C_{\mathbb H^\infty,\,\text{BMO}},T,K}+C_{\mathbb H^\infty,\,\text{BMO}}\E\left[\frac{1}{\lambda}\int_{\tau}^{T}\left|D_{x}\sigma(s,X_{s}^{\alpha},\alpha_{s})\right|^{2}dt+\lambda\int_{\tau}^{T}\left|Z_{s}^{\alpha}\right|^{2}dt\bigg|\mathcal{F}_{\tau}\right]\\
\leq& C_{C_{\mathbb H^\infty,\,\text{BMO}},T,K}+\lambda C_{\mathbb H^\infty,\,\text{BMO}}\E\left[\int_{\tau}^{T}\left|Z_{s}^{\alpha}\right|^{2}ds\bigg|\mathcal{F}_{\tau}\right].
\end{align*}
Picking $\lambda=\frac{1}{2C_{\mathbb H^\infty,\,\text{BMO}}}$, taking a supremum over all $\tau\in[0,T]$ and admissible controls, we have 
\begin{align*}
\sup_{\alpha}\sup_{\tau\leq T}\left\|\left(E\left[\int_{\tau}^{T}|Z_{s}^{\alpha}|^{2}ds|\mathcal{F}_{\tau}\right]^{\frac{1}{2}}\right)\right\|_{\infty}\leq C_{K,T,C_{\mathbb H^\infty,\,\text{BMO}},\lambda}\implies\sup_{i,j}\sup_{\alpha}\left\|\left(Z^{\alpha}\right)^{ij}\circ W_{j}\right\|_{\infty}<\infty.
\end{align*}	
The implication above follows from Remark \ref{Proving BMO}.
\end{proof}
\subsection{Proof of Lemma \ref{lemma bsde stability}}\label{proof:lemma bsde stability}
\begin{proof}
The proof for (\ref{forward_bound}) is an immediate consequence of Doob's Maximal Inequality for submartingale and It\^{o}'s Isometry. To see this,
\begin{align*}
\E\left[\sup_{t\in[0,T]}|X_{t}^{\alpha}-X_{t}^{\alpha'}|^{2}\right]\leq& 2\E\left[\sup_{t\in[0,T]}\left(\int_{0}^{t}|b(u,X_{u}^{\alpha},\alpha)-b(u,X_{u}^{\alpha'},\alpha')|du\right)^{2}\right] \\
&+ 2\E\left[\sup_{t\in[0,T]}\left|\int_{0}^{t}\sigma(u,X_{u}^{\alpha},\alpha)-\sigma(u,X_{u}^{\alpha'},\alpha')dW_{u}\right|^{2}\right]=:2I_{1}+2I_{2}.
\end{align*}
Considering $I_{1}$ and $I_{2}$ separately we have
\begin{align*}
I_{1}\leq& \E\left[\sup_{t\in[0,T]}t\left(\int_{0}^{t}|b(u,X_{u}^{\alpha},\alpha_{u})-b(u,X_{u}^{\alpha'},\alpha'_{u})|^{2}du\right)\right]\text{ (from $\frac{1}{2}-\frac{1}{2}$ H{\"o}lder's Inq)}\\
\leq&T\E\left[\int_0^{T}|b(u,X_{u}^{\alpha},\alpha_{u})-b(u,X_{u}^{\alpha'},\alpha'_{u})|^{2}dt\right]\leq 2TK^{2}\E\left[\int_{0}^{T}|X_{u}^{\alpha}-X_{u}^{\alpha'}|^{2}+|\alpha_{u}-\alpha_{u}'|^{2}\right],
\end{align*}
where the final inequality follows from sing Assumption \ref{additional_assumptions}. For $I_{2}$ set $M_{t}=\int_{0}^{t}\sigma(u,X_{u}^{\alpha},\alpha)-\sigma(u,X_{u}^{\alpha'},\alpha')dW_{u}$ which is a martingale, hence the process $(M_{t}^{2})_{t\in[0,T]}$ is a submartingale. So applying the Doob's martingale Inequality we have 
\begin{align*}
I_{2}\leq& 4 \E\left[\left(\int_{0}^{T}\sigma(u,X_{u}^{\alpha},\alpha_{u})-\sigma(u,X_{u}^{\alpha'},\alpha'_{u})dW_{u}\right)^{2}\right]=4\E\left[\int_{0}^{T}\left|\sigma(u,X_{u}^{\alpha},\alpha_{u})-\sigma(u,X_{u}^{\alpha'},\alpha'_{u})\right|^{2}du\right]\\
\leq&4K^{2}\E\left[\int_{0}^{T}|X_{u}^{\alpha}-X_{u}^{\alpha'}|^{2}+|\alpha_{u}-\alpha_{u}'|^{2}du\right]
\end{align*}
Combining both inequalities gives
\begin{align*}
\mathbb E \sup_{t\in [0,T]} |X_t^\alpha - X_t^{\alpha'}|^2 \leq& C_{T,K}\E\left[\int_{0}^{T}|X_{t}^{\alpha}-X_{t}^{\alpha'}|^{2}dt\right] +C_{T,K}\|\alpha-\alpha'\|_{\mathbb{H}_{p}^{2}}^2\\
\leq&C_{T,K}\int_{0}^{T}\E\left[\sup_{s\leq t }|X_{s}^{\alpha}-X_{s}^{\alpha'}|^{2}\right]dt +C_{T,K}\|\alpha-\alpha'\|_{\mathbb{H}_{p}^{2}}^2.
\end{align*}
From Gr\"{o}nwalll's Inequality $\E\left[\sup_{t\in[0,T]}\left|X_{t}^{\alpha}-X_{t}^{\alpha'}\right|^{2}\right]\leq C_{T,K}\|\alpha-\alpha'\|_{\mathbb{H}_{p}^{2}}^2$.
The bounds for the solutions to the adjoint equation are slightly more involved - this will be similar to Lemma 5.4 in \cite{siska2020gradient} (in that paper the authors bound by the integrated Wasserstein distance instead). Let $\alpha,\alpha'$ be arbitrary admissible controls, and note the following: if 
\begin{align*}
Y_{t}^{\alpha}=&Y_{T}^{\alpha}+\int_{t}^{T}D_{x}\mathcal{H}(s,X_{s}^{\alpha},Y_{s}^{\alpha},Z_{s}^{\alpha},\alpha_{s})ds-\int_{t}^{T}Z_{s}^{\alpha}dW_{s}
\end{align*}
then in differential notation $dY_{t}^{\alpha}=-D_{x}\mathcal{H}(t,X_{t}^{\alpha},Y_{t}^{\alpha},Z_{t}^{\alpha},\alpha_{t})dt + Z_{t}^{\alpha}dW_{t}$, $t\in[0,T]$ and $Y_T^\alpha = g(X_T^\alpha)$.
Bearing this in mind we first compute, using It\^{o}'s product rule and It\^{o}'s lemma, the following
\begin{align*}
d&\left(e^{\beta t}\left|Y_{t}^{\alpha}-Y_{t}^{\alpha'}\right|^{2}\right)=\beta e^{\beta t}\left|Y_{t}^{\alpha}-Y_{t}^{\alpha'}\right|^{2}dt + e^{\beta t}d\left(\left|Y_{t}^{\alpha}-Y_{t}^{\alpha'}\right|^{2}\right)\\
=&\beta e^{\beta t}\left|Y_{t}^{\alpha}-Y_{t}^{\alpha'}\right|^{2}dt + e^{\beta t}\left(2\left(Y_{t}^{\alpha}-Y_{t}^{\alpha'}\right)^{\top}d\left(Y_{t}^{\alpha}-Y_{t}^{\alpha'}\right)+d\left(Y_{t}^{\alpha}-Y_{t}^{\alpha'}\right)\cdot d\left(Y_{t}^{\alpha}-Y_{t}^{\alpha'}\right)\right),
\end{align*}
where $\beta\geq 1$ will be fixed later. Then substituting the following into the above equation,
\begin{align*}
d\left(Y_{t}^{\alpha}-Y_{t}^{\alpha'}\right) =& \left(-D_{x}\mathcal{H}(t,X_{t}^{\alpha},Y_{t}^{\alpha},Z_{t}^{\alpha},\alpha_{t}) + D_{x}\mathcal{H}(t,X_{t}^{\alpha'},Y_{t}^{\alpha'},Z_{t}^{\alpha'},\alpha'_{t})\right)dt+\left(Z_{t}^{\alpha}-Z_{t}^{\alpha'}\right)dW_{t}\\
d\left(Y_{t}^{\alpha}-Y_{t}^{\alpha'}\right)\cdot& d\left(Y_{t}^{\alpha}-Y_{t}^{\alpha'}\right) = \left|Z_{t}^{\alpha}-Z_{t}^{\alpha'}\right|^{2}dt
\end{align*}
we have 
\begin{align*}
d\left(e^{\beta t}\left|Y_{t}^{\alpha}-Y_{t}^{\alpha'}\right|^{2}\right)=&\beta e^{\beta t}\left|Y_{t}^{\alpha}-Y_{t}^{\alpha'}\right|^{2}dt+2e^{\beta t}\left(Y_{t}^{\alpha}-Y_{t}^{\alpha'}\right)^{\top}(Z_{t}^{\alpha}-Z_{t}^{\alpha'})dW_{t}+e^{\beta t}\left|Z_{t}^{\alpha}-Z_{t}^{\alpha'}\right|^{2}dt\\ 
+&2e^{\beta t}\left(Y_{t}^{\alpha}-Y_{t}^{\alpha'}\right)^{\top}\left(-D_{x}\mathcal{H}(t,X_{t}^{\alpha},Y_{t}^{\alpha},Z_{t}^{\alpha},\alpha_{t}) + D_{x}\mathcal{H}(t,X_{t}^{\alpha'},Y_{t}^{\alpha'},Z_{t}^{\alpha'},\alpha'_{t})\right)dt \\
=&e^{\beta t}\Bigl[\beta\left|Y_{t}^{\alpha}-Y_{t}^{\alpha'}\right|^{2}+\left|Z_{t}^{\alpha}-Z_{t}^{\alpha'}\right|^{2}+2\left(Y_{t}^{\alpha}-Y_{t}^{\alpha'}\right)^{\top}\Bigl(-D_{x}\mathcal{H}(t,X_{t}^{\alpha},Y_{t}^{\alpha},Z_{t}^{\alpha},\alpha_{t})\\
&+ D_{x}\mathcal{H}(t,X_{t}^{\alpha'},Y_{t}^{\alpha'},Z_{t}^{\alpha'},\alpha'_{t})\Bigr)\Bigr]dt + e^{\beta t}\left(Y_{t}^{\alpha}-Y_{t}^{\alpha'}\right)^{\top}\left(Z_{t}^{\alpha}-Z_{t}^{\alpha'}\right)dW_{t}.
\end{align*}
In integral form
\begin{align*}
&e^{\beta t}|Y_{t}^{\alpha}-Y_{t}^{\alpha'}|^{2}+\int_{t}^{T}e^{\beta s} \left|Z_{s}^{\alpha}-Z_{s}^{\alpha'}\right|^{2}ds=e^{\beta T}\left|Y_{T}^{\alpha}-Y_{T}^{\alpha'}\right|^{2}-2\int_t^{T}e^{\beta s} \left(Y_{s}^{\alpha}-Y_{s}^{\alpha'}\right)^{\top}\left(Z_{s}^{\alpha}-Z_{s}^{\alpha'}\right)dW_{s}\\
-&\int_t^{T}e^{\beta s}\Biggl[\beta\left|Y_{s}^{\alpha}-Y_{s}^{\alpha'}\right|^2+2\left(Y_{s}^{\alpha}-Y_{s}^{\alpha'}\right)^{\top}\Bigl(-D_{x}\mathcal{H}(s,X_{s}^{\alpha},Y_{s}^{\alpha},Z_{s}^{\alpha},\alpha_{s})+ D_{x}\mathcal{H}(s,X_{s}^{\alpha'},Y_{s}^{\alpha'},Z_{s}^{\alpha'},\alpha'_{s})\Bigr)\Biggr]ds.
\end{align*}
Since $Y^{\alpha},Y^{\alpha'},Z_{s}^{\alpha}$ and $Z_{s}^{\alpha'}$ are solutions to the adjoint equation, the integrand in the stochastic integral above is square integrable hence disappears upon taking (conditional) expectations. Therefore,
\begin{align*}
e^{\beta t}|Y_{t}^{\alpha}&-Y_{t}^{\alpha'}|^{2}+\E_{t}\left[\int_{t}^{T}e^{\beta s}\left|Z_{s}^{\alpha}-Z_{s}^{\alpha'}\right|^{2}ds\right]\\
&=e^{\beta T}\E_{t}\left[\left|D_{x}g(X_{T}^{\alpha})-D_{x}g(X_{T}^{\alpha'})\right|^{2}\right]-\E_{t}\left[\int_t^{T}e^{\beta s}\beta\left|Y_{s}^{\alpha}-Y_{s}^{\alpha'}\right|^2ds\right]\\
&+2\E_{t}\left[\int_{t}^{T}e^{\beta s}\left(Y_{s}^{\alpha}-Y_{s}^{\alpha'}\right)^{\top}\Bigl(D_{x}\mathcal{H}(s,X_{s}^{\alpha},Y_{s}^{\alpha},Z_{s}^{\alpha},\alpha_{s})- D_{x}\mathcal{H}(s,X_{s}^{\alpha'},Y_{s}^{\alpha'},Z_{s}^{\alpha'},\alpha'_{s})\Bigr)\Bigr]ds\right].
\end{align*}
Following what was done in \cite{siska2020gradient} we decompose the above integral as follows:
\begin{align*}
e^{\beta t}|Y_{t}^{\alpha}-Y_{t}^{\alpha'}|^{2}+&\E_{t}\left[\int_{t}^{T}\left|Z_{s}^{\alpha}-Z_{s}^{\alpha'}\right|^{2}ds\right]\leq K^2 e^{\beta T}\E_{t}\left[X_{T}^{\alpha}-X_{T}^{\alpha'}\right]-\beta\E_{t}\left[\int_{t}^{T}e^{\beta s}\left|Y_{t}^{\alpha}-Y_{t}^{\alpha'}\right|^{2}ds\right]\\
+&2\E_{t}\bigg[\int_{t}^{T}e^{\beta s} \left(Y_{s}^{\alpha}-Y_{s}^{\alpha'}\right)^{\top}\bigg(D_{x}b(s,X_{s}^{\alpha},\alpha_{s})\cdot\left(Y_{s}^{\alpha}-Y_{s}^{\alpha'}\right)\\
+&\left[D_{x}b(s,X_{s}^{\alpha},\alpha_{s})-D_{x}b(s,X_{s}^{\alpha'},\alpha'_{s})\right]\cdot Y_{s}^{\alpha'}+D_{x}\sigma(s,X_{s}^{\alpha},\alpha_{s}):\left(Z_{s}^{\alpha}-Z_{s}^{\alpha'}\right)\\
+&\left[D_{x}\sigma(s,X_{s}^{\alpha},\alpha_{s}):Z_{s}^{\alpha'}-D_{x}\sigma(s,X_{s}^{\alpha'},\alpha'_{s}):Z_{s}^{\alpha'}\right]+D_{x}f(s,X_{s}^{\alpha},\alpha_{s})-D_{x}f(s,X_{s}^{\alpha'},\alpha'_{s})\bigg)ds\bigg]\\
\leq&K^2e^{\beta T}\E_{t}\left[\left|X_{T}^{\alpha}-X_{T}^{\alpha'}\right|^{2}\right]-\beta\E_{t}\left[\int_{t}^{T}e^{\beta s}\left|Y_{t}^{\alpha}-Y_{t}^{\alpha'}\right|^{2}ds\right]+I_{1}+I_{2}+I_{3}+I_{4}+I_{5},
\end{align*}
where 
\begin{align*}
I_{1}&\leq\E_{t}\left[\int_{t}^{T}e^{\beta s}\left|Y_{s}^{\alpha}-Y_{s}^{\alpha'}\right|^{2}\left|D_{x}b(s,X_{s}^{\alpha},\alpha_{s})\right|ds\right]\leq K\E_{t}\left[\int_t^Te^{\beta s}\left|Y_{s}^{\alpha}-Y_{s}^{\alpha'}\right|^{2}ds\right]\\
I_{2}&=\E_{t}\left[\int_{t}^{T}e^{\beta s}\left(Y_{s}^{\alpha}-Y_{s}^{\alpha'}\right)^{\top}\left(D_{x}b(s,X_{s}^{\alpha},\alpha_{s})-D_{x}b(s,X_{s}^{\alpha'},\alpha'_{s})\right)Y_{s}^{\alpha}ds\right]\\
I_{3}&\leq\E_{t}\left[\int_{t}^{T}e^{\beta s}\left|Y_{s}^{\alpha}-Y_{s}^{\alpha'}\right|\left|Z_{s}^{\alpha}-Z_{s}^{\alpha'}\right|\left|D_{x}\sigma(s,X_{s}^{\alpha},\alpha_{s})\right|ds\right]\\
I_{4}&\leq\E_{t}\left[\int_{t}^{T}e^{\beta s}\left|Y_{s}^{\alpha}-Y_{s}^{\alpha'}\right|\left|Z_{s}^{\alpha'}\right|\left|D_{x}\sigma(s,X_{s}^{\alpha},\alpha_{s})-D_{x}\sigma(s,X_{s}^{\alpha'},\alpha'_{s})\right|ds\right]\\
I_{5}&=\E_{t}\left[\int_{t}^{T}e^{\beta s}\left|Y_{s}^{\alpha}-Y_{s}^{\alpha'}\right|\left|D_{x}f(s,X_{s}^{\alpha},\alpha_{s})-D_{x}f(s,X_{s}^{\alpha'},\alpha'_{s})\right|ds\right].
\end{align*}
We now bound each of these in turn. For this we will need a slight variation of Young's Inequality namely that for $a,b>0$, $ab\leq\frac{\lambda^{p}a^{p}}{p}+\frac{\lambda^{-q}b^{q}}{q}$ for every $0<\lambda<\infty$ and $p,q$ H{\"o}lder conjugates. In particular for $p=q=2$ we have this gives that for $0<\gamma<\infty$, $ab\leq\frac{1}{2}\left(\gamma a^2 + \gamma^{-1}b^2\right)$. For $I_{3}$,
\begin{align*}
I_{3}\leq& K\E_{t}\left[\int_{t}^{T}e^{\beta s}\left|Y_{s}^{\alpha}-Y_{s}^{\alpha'}\right|\left|Z_{s}^{\alpha}-Z_{s}^{\alpha'}\right|ds\right]\leq\frac{K}{2}\E_{t}\left[\int_{t}^{T}e^{\beta s}\left(\gamma^{-1}\left|Y_{s}^{\alpha}-Y_{s}^{\alpha'}\right|^{2}+\gamma\left|Z_{s}^{\alpha}-Z_{s}^{\alpha'}\right|^{2}\right)ds\right]
\end{align*}
For $I_{5}$, we proceed as follows
\begin{align*}
I_{5}\leq& K\E_{t}\left[\int_{t}^{T}e^{\beta s}\left|Y_{s}^{\alpha}-Y_{s}^{\alpha'}\right|\left|X_{s}^{\alpha}-X_{s}^{\alpha'}\right|+e^{\beta s}\left|Y_{s}^{\alpha}-Y_{s}^{\alpha'}\right|\left|\alpha_{s}-\alpha'_{s}\right|ds\right]\\
\leq&K\gamma\E_{t}\left[\int_{t}^{T}e^{\beta s}\left|Y_{s}^{\alpha}-Y_{s}^{\alpha'}\right|^{2}ds\right]+\frac{K\gamma^{-1}}{2}\E_{t}\left[\int_{t}^{T}e^{\beta s}\left|X_{s}^{\alpha}-X_{s}^{\alpha'}\right|^{2}ds\right]\\
&+ \frac{K\gamma^{-1}}{2} \E_{t}\left[\int_{t}^{T}e^{\beta s}\left|\alpha_{s}-\alpha'_{s}\right|^{2}ds\right]\\.
\end{align*}
For $I_{2}$ we proceed as follows
\begin{align*}\label{helper_bound}
I_{2}\leq& C_{\mathbb H^\infty,\,\text{BMO}}\E_{t}\left[\int_{t}^{T}e^{\beta s}\left|Y_{s}^{\alpha}-Y_{s}^{\alpha'}\right|\left|D_{x}b(s,X_{s}^{\alpha},\alpha_{s})-D_{x}b(s,X_{s}^{\alpha'},\alpha'_{s})\right|ds\right]\\
\leq&\frac{C_{\mathbb H^\infty,\,\text{BMO}}\gamma }{2}\E_{t}\left[\int_{t}^{T}e^{\beta s}\left|Y_{s}^{\alpha}-Y_{s}^{\alpha'}\right|^{2} ds\right]+\frac{C_{\mathbb H^\infty,\,\text{BMO}}\gamma^{-1}}{2} \E_{t}\left[\int_{t}^{T}e^{\beta s}\left|D_{x}b(s,X_{s}^{\alpha},\alpha_{s})-b(s,X_{s}^{\alpha'},\alpha'_{s})\right|^{2}ds\right]\\
\leq&\frac{C_{\mathbb H^\infty,\,\text{BMO}}\gamma }{2}\E_{t}\left[\int_{t}^{T}e^{\beta s}\left|Y_{s}^{\alpha}-Y_{s}^{\alpha'}\right|^{2} ds\right]+\frac{C_{\mathbb H^\infty,\,\text{BMO}}\gamma^{-1}}{2}K^2\E_{t}\left[\int_{t}^{T}e^{\beta s} \left|\alpha_{s}-\alpha'_{s}\right|^{2}ds\right]\\
&+\frac{C_{\mathbb H^\infty,\,\text{BMO}}\gamma^{-1}}{2}K^2\E_{t}\left[\int_{t}^{T}e^{\beta s} \left|X^{\alpha}_{s}-X^{\alpha'}_{s}\right|^{2}ds\right].
\end{align*}
Pick $\beta=\frac{C_{\mathbb H^\infty,\,\text{BMO}}\gamma}{2}+K+\frac{K\gamma^{-1}}{2}+K\gamma$ and for now leave $0<\gamma<\infty$ free. Combining everything we have so far 
\begin{align}
e^{\beta t} \left|Y_{t}^{\alpha}-Y_{t}^{\alpha'}\right|^{2}+&\left(1-\frac{K\gamma}{2}\right)\E_{t}\left[\int_{t}^{T}e^{\beta s} \left|Z_{s}^{\alpha}-Z_{s}^{\alpha'}\right|^{2}ds\right]\leq K^2e^{\beta T} \E_{t}\left[\left|X_{T}^{\alpha}-X_{T}^{\alpha'}\right|^{2}\right]\nonumber\\
+&\left(\frac{K\gamma^{-1}}{2} + \frac{D\gamma^{-1}K^2}{2} \right) \E_{t}\left[\int_{t}^{T}e^{\beta s}\left|\alpha_{s}-\alpha'_{s}\right|^{2}ds\right]\\
+&\left(\frac{K\gamma^{-1}}{2}+\frac{D\gamma^{-1}K^2}{2}\right)\E_{t}\left[\int_{t}^{T}e^{\beta s}\left|X_{s}^{\alpha}-X_{s}^{\alpha'}\right|^{2}ds\right]+I_{4}\nonumber.
\end{align}
We now consider $I_{4}$. From Lemma \ref{technical_result_for_BSDEs} we know that the process $\left(\left|Z_{t}^{\alpha}\right|\circ W^{j}\right)$ is a BMO martingale for any $1\leq j\leq d'$. In order to apply Theorem \ref{fefferman_inq} we want to check the process 
\begin{align*}
H_{t}:=\int_{0}^{t}e^{\beta s}\left|Y_{s}^{\alpha}-Y_{s}^{\alpha'}\right|\left|D_{x}\sigma(s,X_{s}^{\alpha},\alpha_{s})-D_{x}\sigma(s,X_{s}^{\alpha'},\alpha'_{s})\right|dW^{j}
\end{align*}
satisfies the following properties:
\begin{enumerate}
\item It is an $\mathbb{F}$-martingale. To check this it suffices to note that 
\begin{align*}
\E&\left[\int_{0}^{t}e^{2\beta s}\left|Y_{s}^{\alpha}-Y_{s}^{\alpha'}\right|^{2}\left|X_{s}^{\alpha}-X_{s}^{\alpha'}\right|^{2}+\left|Y_{s}^{\alpha}-Y_{s}^{\alpha'}\right|^{2}\left|\alpha_{s}-\alpha_{s}'\right|^{2}ds\right]<\infty,
\end{align*}
where we use the inequality $(a+b)^{2}\leq 2a^2+2b^2$ , the fact  $X^{\alpha}$, $X^{\alpha'}$ solve the controlled SDE and the bound in Lemma \ref{technical_result_for_BSDEs} for $Y^\alpha$ and $Y^{\alpha'}$.
\item Secondly we want to show $\E\left[\sqrt{\left\langle H\right\rangle_{T}}\right]<\infty$. To see this the calculation is identical to what was done in checking the $\mathbb{F}$-martingale requirement.
\end{enumerate}
We now apply Fefferman's Inequality to the BMO martingale $\left|Z^{\alpha}\right|\circ W^{j}$ and the It\^{o} integral $H_{t}$
\begin{align*}
I_{4}=&\E_{t}\left[\langle\left(\left|Z^{\alpha}\right|\circ W^{j}\right),H\rangle_{T}-\langle\left(\left|Z^{\alpha}\right|\circ W^{j}\right),H\rangle_{t}\right]\leq\E\left[\langle\left(\left|Z^{\alpha}\right|\circ W^{j}\right),H\rangle_{T}-\langle\left(\left|Z^{\alpha}\right|\circ W^{j}\right),H\rangle_{t}\right]\\
=&\E\left[\int_{t}^{T}d\langle\left(\left|Z^{\alpha}\right|\circ W^{j}\right),H\rangle_{t}\right]\leq\E\left[\int_{0}^{T}\left|d\langle\left(\left|Z^{\alpha}\right|\circ W^{j}\right),H\rangle_{t}\right|\right]\leq\sqrt{2}\|\left|Z^{\alpha}\right|\circ W^j\|_{\text{BMO}}\E\left[\langle H\rangle^{\frac{1}{2}}_{T}\right],
\end{align*}
where the final inequality follows from Fefferman's inequality. Evaluating the quadratic variation of $H$ gives
\begin{align*}
\langle H\rangle^{\frac{1}{2}}_{T}=\left(\int_{0}^{T}e^{2\beta s}\left|Y_{s}^{\alpha}-Y_{s}^{\alpha'}\right|^{2}\left|D_{x}\sigma(s,X_{s}^{\alpha},\alpha_{s})-D_{x}\sigma(s,X_{s}^{\alpha'},\alpha'_{s})\right|^{2} ds\right)^{\frac{1}{2}}.
\end{align*} 
This gives the following bound,
\begin{align*}
I_{4}\leq& C_{\mathbb H^\infty,\,\text{BMO}}\sqrt{2}\E\left[\left(\int_{0}^{T}e^{2\beta s}\left|Y_{s}^{\alpha}-Y_{s}^{\alpha'}\right|^{2}\left|D_{x}\sigma(s,X_{s}^{\alpha},\alpha_{s})-D_{x}\sigma(s,X_{s}^{\alpha'},\alpha'_{s})\right|^{2} ds\right)^{\frac{1}{2}}\right]\\
\leq&e^{\beta T} C_{\mathbb H^\infty,\,\text{BMO}}\sqrt{2}\E\left[\left(\sup_{0\leq t\leq T}\left|Y_{t}^{\alpha}-Y_{t}^{\alpha'}\right|^{2}\right)^{\frac{1}{2}}\left(\int_{0}^{T}\left|D_{x}\sigma(s,X_{s}^{\alpha},\alpha_{s})-D_{x}\sigma(s,X_{s}^{\alpha'},\alpha'_{s})\right|^{2} ds\right)^{\frac{1}{2}}\right]\\
\leq&\frac{e^{\beta T} C_{\mathbb H^\infty,\,\text{BMO}}\sqrt{2}}{2}\gamma\E\left[\sup_{0\leq t\leq T}\left|Y_{t}^{\alpha}-Y_{t}^{\alpha'}\right|^{2}\right]\\
&+\frac{e^{\beta T} C_{\mathbb H^\infty,\,\text{BMO}}\sqrt{2}}{2}\gamma^{-1}\E\left[\int_{0}^{T}\left|D_{x}\sigma(s,X_{s}^{\alpha},\alpha_{s})-D_{x}\sigma(s,X_{s}^{\alpha'},\alpha'_{s})\right|^{2} ds\right]\\
\leq&\frac{e^{\beta T} C_{\mathbb H^\infty,\,\text{BMO}}\sqrt{2}}{2}\gamma\E\left[\sup_{0\leq t\leq T}\left|Y_{t}^{\alpha}-Y_{t}^{\alpha'}\right|^{2}\right]\\
&+\frac{e^{\beta T} C_{\mathbb H^\infty,\,\text{BMO}}\sqrt{2}}{2}\gamma^{-1}K^{2}2\E\left[\int_{0}^{T}\left|X_{s}^{\alpha}-X_{s}^{\alpha'}\right|^{2}+\left|\alpha_{s}-\alpha_{s}'\right|^{2}ds\right]\\
\leq&\frac{e^{\beta T} C_{\mathbb H^\infty,\,\text{BMO}}\sqrt{2}}{2}\gamma\E\left[\sup_{0\leq t\leq T}\left|Y_{t}^{\alpha}-Y_{t}^{\alpha'}\right|^{2}\right]+C_{T,K,C_{\mathbb H^\infty,\,\text{BMO}},\gamma}\|\alpha-\alpha'\|^2_{\mathbb{H}_{p}^{2}}.
\end{align*}
Combining everything together we now have 
\begin{align}\label{test}
e^{\beta t} \left|Y_{t}^{\alpha}-Y_{t}^{\alpha'}\right|^{2}+&\left(1-\frac{K\gamma}{2}\right)\E_{t}\left[\int_{t}^{T}e^{\beta s} \left|Z_{s}^{\alpha}-Z_{s}^{\alpha'}\right|^{2}ds\right]\leq \E_{t}\left[\left|X_{T}^{\alpha}-X_{T}^{\alpha'}\right|^{2}\right]\nonumber\\
+&\left(\frac{K\gamma^{-1}}{2} + \frac{C_{\mathbb H^\infty,\,\text{BMO}}\gamma^{-1}K^2}{2} \right) \E_{t}\left[\int_{t}^{T}e^{\beta s}\left|\alpha_{s}-\alpha'_{s}\right|^{2}ds\right]\\
+&\left(\frac{K\gamma^{-1}}{2}+\frac{C_{\mathbb H^\infty,\,\text{BMO}}\gamma^{-1}K^2}{2}\right)\E_{t}\left[\int_{t}^{T}e^{\beta s}\left|X_{s}^{\alpha}-X_{s}^{\alpha'}\right|^{2}ds\right]\nonumber\\
+&\frac{e^{\beta T} C_{\mathbb H^\infty,\,\text{BMO}}\sqrt{2}}{2}\gamma\E\left[\sup_{0\leq t\leq T}\left|Y_{t}^{\alpha}-Y_{t}^{\alpha'}\right|^{2}\right]+C_{T,K,C_{\mathbb H^\infty,\,\text{BMO}},\gamma}\|\alpha-\alpha'\|^2_{\mathbb{H}_{p}^{2}}\nonumber.
\end{align}
If we now take $t=0$, pick $\gamma$ such that $0<1-\frac{K\gamma}{2}<1$, take expectations and apply the total law of probability to the inequality we have the following:
\begin{align*}
\left(1-\frac{K\gamma}{2}\right)&\E\left[\int_0^{T}e^{\beta s}\left|Z_{s}^{\alpha}-Z_{s}^{\alpha'}\right|^{2}ds\right]\\
\leq&C_{K,C_{\mathbb H^\infty,\,\text{BMO}},\gamma^{-1},T}\|\alpha-\alpha'\|_{\mathbb{H}_{p}^{2}}^{2}+\frac{e^{\beta T} C_{\mathbb H^\infty,\,\text{BMO}}\sqrt{2}}{2}\gamma\E\left[\sup_{0\leq t\leq T}\left|Y_{t}^{\alpha}-Y_{t}^{\alpha'}\right|^{2}\right],
\end{align*}
where we have used the bound from the first part of the proof. Also, pick $\gamma$ such that $0<1-\frac{e^{\beta T} C_{\mathbb H^\infty,\,\text{BMO}}\sqrt{2}}{2}\gamma<1$, then from (\ref{test}) if we instead take a supremum over $t\in[0,T]$ then an expectation we have
\begin{align*}
\left(1-\frac{e^{\beta T} C_{\mathbb H^\infty,\,\text{BMO}}\sqrt{2}}{2}\gamma\right)\left(\E\left[\sup_{t\in[0,T]}\left|Y_{t}^{\alpha}-Y_{t}^{\alpha'}\right|^{2}\right]\right)\leq C_{K,C_{\mathbb H^\infty,\,\text{BMO}},\gamma^{-1},T}\|\alpha-\alpha'\|_{\mathbb{H}_{p}^{2}}^{2},
\end{align*}
where again we have used the bound from the first part of the proof. So provided we pick $\gamma\in(0,\infty)$ such that 
\begin{align*}
\begin{cases}
0<1-\frac{K\gamma}{2}<1\\
0<1-\frac{e^{\beta T} C_{\mathbb H^\infty,\,\text{BMO}}\sqrt{2}}{2}\gamma<1
\end{cases}\implies\gamma<\min\left\{\frac{2}{K},\frac{\sqrt{2}e^{-\beta T}}{C_{\mathbb H^\infty,\,\text{BMO}}}\right\}, 
\end{align*}
we have shown, 
\begin{align*}
\E\left[\sup_{t\in[0,T]}\left|Y_{t}^{\alpha}-Y_{t}^{\alpha'}\right|^{2}\right]\leq C_{K,C_{\mathbb H^\infty,\,\text{BMO}},\gamma^{-1},T}\left(1-\frac{e^{\beta T} C_{\mathbb H^\infty,\,\text{BMO}}\sqrt{2}}{2}\gamma\right)^{-1}\|\alpha-\alpha'\|_{\mathbb{H}_{p}^{2}}^{2},
\end{align*}
and 
\begin{align*}
\E\left[\int_{0}^{T}\left|Z_{s}^{\alpha}-Z_{s}^{\alpha'}\right|^{2}ds\right]\leq\left(1-\frac{K\gamma}{2}\right)^{-1}C_{K,C,\gamma^{-1},T}\|\alpha-\alpha'\|_{\mathbb{H}_{p}^{2}}^{2}
\end{align*}
\end{proof}

\subsection{Proof of Lemma \ref{cost_functional_increment_bound}}\label{proof:cost_functional_increment_bound}
\begin{proof}
We start with the following inequality, which holds for any admissible controls $\varphi$ and $\theta$, which is proven in Lemma 2.3  \cite{kerimkulov2021modified} - the idea is to add and subtract relevant terms to the cost functional $J$ and then estimate accordingly, the details are omitted,
\begin{align*}
J(\varphi)-J(\theta)\leq&\E\left[\int_{0}^{T}\mathcal{H}(t,X_{t}^{\varphi},Y_{t}^{\theta},Z_{t}^{\theta},\varphi_{t})-\mathcal{H}(t,X_{t}^{\theta},Y_{t}^{\theta},Z_{t}^{\theta},\theta_{t})dt\right]\\
&-\E\left[\int_{0}^{T}\left(X_{t}^{\varphi}-X_{t}^{\theta}\right)^{\top}D_{x}\mathcal{H}(t,X_{t}^{\theta},Y_{t}^{\theta},Z_{t}^{\theta},\theta_{t})dt\right]+\frac{K}{2}\E\left[\left|X_{T}^{\varphi}-X_{T}^{\theta }\right|^{2}\right],
\end{align*} 
and then applying Taylor's theorem to the term, 
\begin{align*}
\mathcal{H}(t,X_{t}^{\varphi},Y_{t}^{\theta},Z_{t}^{\theta},\varphi_{t})=&\mathcal{H}(t,X_{t}^{\theta},Y_{t}^{\theta},Z_{t}^{\theta},\varphi_{t}) + (X_{t}^{\varphi}-X_{t}^{\theta})^{\top}D_{x}\mathcal{H}(t,X_{t}^{\theta},Y_{t}^{\theta},Z_{t}^{\theta},\varphi_{t})\\
&+\frac{1}{2}\left(X_{t}^{\varphi}-X_{t}^{\theta}\right)^{\top}D_{xx}^{2}\mathcal{H}(t,C_{t},Y_{t}^{\theta},Z_{t}^{\theta},\varphi_{t})\left(X_{t}^{\varphi}-X_{t}^{\theta}\right),
\end{align*}
for some $\left(C_{t}( \omega)\right)_{t\in[0,T]}$. 
This is where the authors in~\cite{kerimkulov2021modified} apply the assumption $D^2_x\sigma=0$ however since we no longer have this assumption we proceed as follows. 
\begin{align}\label{initial_bound}
J(\varphi)&-J(\theta)\leq\E\left[\int_0^T\mathcal{H}(t,X_{t}^{\theta},Y_{t}^{\theta},Z_{t}^{\theta},\varphi_{t})-\mathcal{H}(t,X_{t}^{\theta},Y_{t}^{\theta},Z_{t}^{\theta},\theta_{t})dt\right]+\frac{K}{2}\E\left[\left|X_{T}^{\varphi}-X_{T}^{\theta }\right|^{2}\right]\nonumber\\
-& \mathbb E\left[\int_0^{T}(X_{t}^{\varphi}-X_{t}^{\theta})^{\top}\left(D_{x}\mathcal{H}(t,X_{t}^{\theta},Y_{t}^{\theta},Z_{t}^{\theta},\theta_{t})-D_{x}\mathcal{H}(t,X_{t}^{\theta},Y_{t}^{\theta},Z_{t}^{\theta},\varphi_{t})\right)dt\right]\\
+&\frac{1}{2}\E\left[\int_{0}^{T}\left(X_{t}^{\varphi}-X_{t}^{\theta}\right)^{\top}D_{xx}^{2}\mathcal{H}(t,C_{t},Y_{t}^{\theta},Z_{t}^{\theta},\varphi_{t})\left(X_{t}^{\varphi}-X_{t}^{\theta}\right)dt\right]\nonumber.
\end{align}		
We now introduce the following notation: $b^{i}_{x_j}$ represents the $x_j$ derivative of the ith entry in the vector $b(t,x,a)$, and same for $\sigma^{i,j}_{x_k}$. 
\begin{align*}
\mathbb E & \left[\int_0^{T}(X_{t}^{\varphi}-X_{t}^{\theta})^{\top}\left(D_{x}\mathcal{H}(t,X_{t}^{\theta},Y_{t}^{\theta},Z_{t}^{\theta},\theta_{t})-D_{x}\mathcal{H}(t,X_{t}^{\theta},Y_{t}^{\theta},Z_{t}^{\theta},\varphi_{t})\right)dt\right]\\
=&\E\left[\int_{0}^{T}\sum_{j=1}^{d}\left(X_{t}^{\varphi}-X_{t}^{\theta}\right)_{j}\sum_{i=1}^{d}(b_{x_{j}}^{i}(t,X_{t}^{\theta},\theta_{t})-b_{x_{j}}^{i}(t,X_{t}^{\theta},\varphi_{t}))Y_{t}^{i}(\theta)dt\right]\\
&+\E\left[\int_{0}^{T}\sum_{k=1}^{d}(X_{t}^{\theta}-X_{t}^{\varphi})_{k}\sum_{j=1}^{d'}\sum_{i=1}^{d}\left(\sigma_{x_k}^{i,j}(t,X_{t}^{\theta},\theta_{t})-\sigma_{x_{k}}^{i,j}(t,X_{t}^{\theta},\varphi_{t})\right)\left(Z^{\theta}_t\right)^{i,j}dt\right]\\
&+\E\left[\int_{0}^{T}\sum_{k=1}^{d}(X_{t}^{\varphi}-X_{t}^{\theta})_{k}(f_{x_{k}}(t,X_{t}^{\varphi},\varphi_{t})-f_{x_{k}}(t,X_{t}^{\varphi},\theta_{t}))dt\right]\\
\leq&\|Y^{\theta}\|_{\mathbb{H}^{\infty}_{d}}\E\left[\int_0^{T}\sum_{j=1}^{d}\left(\frac{(X_{t}^{\varphi}-X_{t}^{\theta})_{j}^{2}}{2}+\frac{\left(\sum_{i=1}^{d}(b^{i}_{x_{j}}(t,X_{t}^{\theta},\theta_{t})-b^{i}_{x_{j}}(t,X_{t}^{\theta},\varphi_{t}))\right)^{2}}{2}\right)dt\right]\\
&+\sum_{k=1}^{d}\sum_{j=1}^{d'}\sum_{i=1}^{d}\E\left[\int_{0}^{T}(X_{t}^{\theta}-X_{t}^{\varphi})_{k}\left(\sigma_{x_k}^{i,j}(t,X_{t}^{\theta},\theta_{t})-\sigma_{x_{k}}^{i,j}(t,X_{t}^{\theta},\varphi_{t})\right)\left(Z^{\theta}_t\right)^{i,j}dt\right]\\
&+\frac{1}{2}\E\left[\int_{0}^{T}|X_{t}^{\varphi}-X_{t}^{\theta}|^{2}+|D_xf(t,X_{t}^{\varphi},\varphi_{t})-D_xf(t,X_{t}^{\varphi},\theta_{t})|^{2}dt\right]\\
\leq&\frac{d\|Y^{\theta}\|_{\mathbb{H}^{\infty}}}{2^2}\E\left[\int_{0}^{T}\left|X_{t}^{\varphi}-X_{t}^{\theta}\right|^{2}+\sum_{j=1}^{d}\sum_{i=1}^{d}|b^{i}_{x_{j}}(t,X_{t}^{\theta},\theta_{t})-b_{x_{j}}^{i}(t,X_{t}^{\theta},\varphi)|^{2}dt\right]\\
&+\sum_{k=1}^{d}\sum_{j=1}^{d'}\sum_{i=1}^{d}\E\left[\int_{0}^{T}(X_{t}^{\theta}-X_{t}^{\varphi})_{k}\left(\sigma_{x_k}^{i,j}(t,X_{t}^{\theta},\theta_{t})-\sigma_{x_{k}}^{i,j}(t,X_{t}^{\theta},\varphi_{t})\right)\left(Z^{\theta}_t\right)^{i,j}dt\right]\\
&+\frac{1}{2}C_{K,T}\E\left[\int_{0}^{T}|\varphi_{t}-\theta_{t}|^{2}dt\right]\\
=&\frac{d\|Y^{\theta}\|_{\mathbb{H}^{\infty}}}{2^2}\E\left[\int_{0}^{T}\left|X_{t}^{\varphi}-X_{t}^{\theta}\right|^{2}+\left|D_{x}b(t,X_{t}^{\theta},\theta_{t})-D_{x}b(t,X_{t}^{\theta},\varphi_{t})\right|^{2}dt\right]+C_{K,T}\E\left[\int_{0}^{T}|\varphi_{t}-\theta_{t}|^{2}dt\right]\\
&+\sum_{k=1}^{d}\sum_{j=1}^{d'}\sum_{i=1}^{d}\E\left[\int_{0}^{T}(X_{t}^{\theta}-X_{t}^{\varphi})_{k}\left(\sigma_{x_k}^{i,j}(t,X_{t}^{\theta},\theta_{t})-\sigma_{x_{k}}^{i,j}(t,X_{t}^{\theta},\varphi_{t})\right)\left(Z^{\theta}_t\right)^{i,j}dt\right]\\
\leq& C_{K,T,C_{\mathbb H^\infty,\,\text{BMO}}}\E\left[\int_{0}^{T}\left|\theta_{t}-\varphi_{t}\right|^{2}dt\right]\\
&+\sum_{k=1}^{d}\sum_{j=1}^{d'}\sum_{i=1}^{d}\E\left[\int_{0}^{T}(X_{t}^{\theta}-X_{t}^{\varphi})_{k}\left(\sigma_{x_k}^{i,j}(t,X_{t}^{\theta},\theta_{t})-\sigma_{x_{k}}^{i,j}(t,X_{t}^{\theta},\varphi_{t})\right)\left(Z^{\theta}_t\right)^{i,j}dt\right].
\end{align*}
where we applied Assumptions \ref{Ass_2nd_derivative_sapce} and \ref{additional_assumptions} and applied the bound \eqref{forward_bound} in Lemma \ref{lemma bsde stability}. Now for the terms involving $\left(Z^{\theta}_t\right)^{i,j}$ we proceed as follows:
\begin{align*}
&\sum_{k=1}^{d}\sum_{j=1}^{d'}\sum_{i=1}^{d}\E\left[\int_{0}^{T}(X_{t}^{\theta}-X_{t}^{\varphi})_{k}\left(\sigma_{x_k}^{i,j}(t,X_{t}^{\theta},\theta_{t})-\sigma_{x_{k}}^{i,j}(t,X_{t}^{\theta},\varphi_{t})\right)\left(Z^{\theta}_t\right)^{i,j}dt\right]\\
&\leq\sum_{k=1}^{d}\sum_{j=1}^{d'}\sum_{i=1}^{d}\E\left[\int_{0}^{T}|d\left\langle(X_{t}^{\theta}-X_{t}^{\varphi})_{k}\left(\sigma_{x_k}^{i,j}(t,X_{t}^{\theta},\theta_{t})-\sigma_{x_{k}}^{i,j}(t,X_{t}^{\theta},\varphi_{t})\right)\circ W^{j},\left(Z^{\theta}_t\right)^{i,j}\circ  W^{j}\right\rangle|\right]\\
&\leq\sum_{k=1}^{d}\sum_{j=1}^{d'}\sum_{i=1}^{d}\|\left(Z^{\theta}_t\right)^{i,j}\circ W\|_{\text{BMO}} \E\left[\left(\int_{0}^{T}(X_{t}^{\theta}-X_{t}^{\varphi})_{k}^{2}\left(\sigma_{x_k}^{i,j}(t,X_{t}^{\theta},\theta_{t})-\sigma_{x_{k}}^{i,j}(t,X_{t}^{\theta},\varphi_{t})\right)^{2}dt\right)^{\frac{1}{2}}\right]\\
&\leq C_{\mathbb H^\infty,\,\text{BMO}}\sum_{k=1}^{d}\sum_{j=1}^{d'}\sum_{i=1}^{d}\E\left[\left(\sup_{t\in[0,T]}(X_{t}^{\theta}-X_{t}^{\varphi})_{k}^{2}\right)^{\frac{1}{2}}\left(\int_{0}^{T}\left(\sigma_{x_k}^{i,j}(t,X_{t}^{\theta},\theta_{t})-\sigma_{x_{k}}^{i,j}(t,X_{t}^{\theta},\varphi_{t})\right)^{2}dt\right)^{\frac{1}{2}}\right]\\
&\leq\frac{C_{\mathbb H^\infty,\,\text{BMO}}}{2}\sum_{k=1}^{d}\sum_{j=1}^{d'}\sum_{i=1}^{d}\E\left[\sup_{t\in[0,T]}(X_{t}^{\theta}-X_{t}^{\varphi})_{k}^{2}+\int_{0}^{T}\left(\sigma_{x_k}^{i,j}(t,X_{t}^{\theta},\theta_{t})-\sigma_{x_{k}}^{i,j}(t,X_{t}^{\theta},\varphi_{t})\right)^{2}dt\right]\\
&\leq\frac{C_{\mathbb H^\infty,\,\text{BMO}}}{2}\E\left[\sup_{t\in[0,T]}|X_{t}^{\theta}-X_{t}^{\varphi}|^2+\int_{0}^{T}\left|D_{x}\sigma(t,X_{t}^{\theta},\theta_{t})-D_x\sigma(t,X_{t}^{\theta},\varphi_{t})\right|^{2}dt\right]\\
&\leq C_{C_{\mathbb H^\infty,\,\text{BMO}},K}\E\left[\int_0^T\left|\theta_t - \varphi_{t}\right|^{2}dt\right]
\end{align*}
Therefore, 
\begin{align*}
E\bigg[\int_0^{T}(X_{t}^{\varphi}&-X_{t}^{\theta})^{\top}\left(D_{x}\mathcal{H}(t,X_{t}^{\theta},Y_{t}^{\theta},Z_{t}^{\theta},\theta_{t})-D_{x}\mathcal{H}(t,X_{t}^{\theta},Y_{t}^{\theta},Z_{t}^{\theta},\varphi_{t})\right)dt\bigg]\\
&\leq C_{T,K,C_{\mathbb H^\infty,\,\text{BMO}}}\E\left[\int_0^T\left|\theta_t-\varphi_t\right|^{2}dt\right].
\end{align*}
and substituting this into (\ref{initial_bound}) and applying Lemma \ref{lemma bsde stability} one more time for the term $\E\left[
|X_T^{\theta}-X_T^\alpha|\right]$ we have the following
\begin{align*}
J(\varphi)-J(\theta)\leq&\E\left[\int_0^T\mathcal{H}(t,X_{t}^{\theta},Y_{t}^{\theta},Z_{t}^{\theta},\varphi_{t})-\mathcal{H}(t,X_{t}^{\theta},Y_{t}^{\theta},Z_{t}^{\theta},\theta_{t})dt\right]\\
&+C_{T,K,C_{\mathbb H^\infty,\,\text{BMO}}}\E\left[\int_0^T\left|\theta_t-\varphi_t\right|^2dt\right]\\
&+\frac{1}{2}\E\left[\int_{0}^{T}\left(X_{t}^{\varphi}-X_{t}^{\theta}\right)^{\top}D_{xx}^{2}\mathcal{H}(t,C_{t},Y_{t}^{\theta},Z_{t}^{\theta},\varphi_{t})\left(X_{t}^{\varphi}-X_{t}^{\theta}\right)dt\right].
\end{align*}
For the term $\E\left[\int_{0}^{T}\left(X_{t}^{\varphi}-X_{t}^{\theta}\right)^{\top}D_{xx}^{2}\mathcal{H}(t,C_{t},Y_{t}^{\theta},Z_{t}^{\theta},\varphi_{t})\left(X_{t}^{\varphi}-X_{t}^{\theta}\right)dt\right]$, we proceed in the same way.
\begin{align*}
\E&\left[\int_{0}^{T}\left(X_{t}^{\varphi}-X_{t}^{\theta}\right)^{\top}D_{xx}^{2}\mathcal{H}(t,C_{t},Y_{t}^{\theta},Z_{t}^{\theta},\varphi_{t})\left(X_{t}^{\varphi}-X_{t}^{\theta}\right)dt\right]\\
=&\E\biggl[\int_{0}^{T}\sum_{l=1}^{d}\left(X_{t}^{\varphi}-X_{t}^{\theta}\right)_{l}\cdot\\
&\sum_{k=1}^{d}\left(X_{t}^{\varphi}-X_{t}^{\theta}\right)_{k}\left(\sum_{i=1}^{d}b_{x_{k}x_{l}}^{i}(t,C_t,\varphi_{t})\left(Y_t^\theta\right)^i + \sum_{j=1}^{d'}\sum_{i=1}^{d}\sigma_{x_k,x_l}^{i,j}(t,C_t,\varphi_t)\left(Z_t^{\theta}\right)^{ij}\right)dt\bigg]\\
\leq&\sum_{l=1}^{d}\sum_{k=1}^{d}\sum_{i=1}^{d}\E\left[\int_{0}^{T}\left(X_{t}^{\varphi}-X_{t}^{\theta}\right)_{l}\left(X_{t}^{\varphi}-X_{t}^{\theta}\right)_{k}b^{i}_{x_{k}x_{l}}(t,C_t,\varphi_{t})Y_t^{i}(\theta)dt\right]\\
&+\sum_{l=1}^{d}\sum_{k=1}^{d}\sum_{i=1}^{d}\sum_{j=1}^{d'}\E\left[\int_{0}^{T}\left(X_{t}^{\varphi}-X_{t}^{\theta}\right)_{l}\left(X_{t}^{\varphi}-X_{t}^{\theta}\right)_{k}\sigma_{x_k,x_l}^{i,j}(t,C_t,\varphi_t)\left(Z_t^{\theta}\right)^{ij}dt\right]\\
\leq&C_{K,D}\E\left[\int_0^{T}|X_t^{\varphi}-X_t^{\theta}|^{2}dt\right]\\
&+K\sum_{l=1}^{d}\sum_{k=1}^{d}\sum_{i=1}^{d}\sum_{j=1}^{d'}\E\left[\int_0^{T}\left|d\left\langle\left(X_{t}^{\varphi}-X_{t}^{\theta}\right)_{l}\left(X_{t}^{\varphi}-X_{t}^{\theta}\right)_{k}\circ W^{j},\left(Z_t^{\theta}\right)^{ij}\circ W^{j}\right\rangle_{s}\right|\right]\\
\leq& C_{K,C_{\mathbb H^\infty,\,\text{BMO}}}\E\left[\int_{0}^{T}|\varphi_t-\theta_t|^{2}dt\right]\\
&+K\sum_{l=1}^{d}\sum_{k=1}^{d}\sum_{i=1}^{d}\sum_{j=1}^{d'}\|Z^{i,j}\circ W^{j}\|_{\text{BMO}}E\left[\left(\int_{0}^{T}(X_{t}^{\varphi}-X_{t}^{\theta})_{k}(X_{t}^{\varphi}-X_{t}^{\theta})_{l}dt\right)^{\frac{1}{2}}\right]\\
\leq& C_{K,C_{\mathbb H^\infty,\,\text{BMO}}}\E\left[\int_{0}^{T}|\varphi_t-\theta_t|^{2}dt\right]+C_{d,d',K,C_{\mathbb H^\infty,\,\text{BMO}},T}\E\left[\sup_{t\in[0,T]}|X_{t}^{\varphi}-X_{t}^{\alpha}|^{2}\right]\\
\leq& C_{K,C_{\mathbb H^\infty,\,\text{BMO}},T,d,d'}\E\left[\int_0^{T}|\alpha_{t}-\theta_{t}|^{2}dt\right]\,.
\end{align*}
\end{proof}

\section*{Acknowledgements}
The authors are grateful for many fruitful discussions with B. Kerimkulov and L. Szpruch which greatly helped clarify many aspects of the problem and led to the authors being able to present more complete set of results.

\bibliographystyle{abbrv}
\bibliography{bibliography}
\end{document}